\DeclareMathAlphabet{\mathpzc}{OT1}{pzc}{m}{it}
\newtheorem{theorem}{Theorem}[section]
\newtheorem{proposition}[theorem]{Proposition}
\newtheorem{corollary}[theorem]{Corollary}
\newtheorem{lemma}[theorem]{Lemma}
\newtheorem*{theorem*}{Theorem}
\newtheorem*{proposition*}{Proposition}
\newtheorem*{corollary*}{Corollary}
\newtheorem*{lemma*}{Lemma}
\newtheorem*{conjecture*}{Conjecture}
\theoremstyle{definition}
\newtheorem{definition}[theorem]{Definition}
\newtheorem*{definition*}{Definition}
\theoremstyle{remark}
\newtheorem{example}[theorem]{Example}
\newtheorem{remark}[theorem]{Remark}
\newtheorem*{example*}{Example}
\newtheorem*{examples*}{Examples}
\newtheorem*{remark*}{Remark}
\newtheorem*{remarks*}{Remarks}
\newtheorem*{exercise*}{Exercise}
\newcommand\da{\!\downarrow\!}
\newcommand\la{\leftarrow}
\newcommand\lla{\longleftarrow}
\newcommand\id{\mathrm{id}}
\newcommand\ten{\otimes}
\newcommand\DD{\mathrm{D}}
\renewcommand\H{\mathrm{H}}
\newcommand\z{\mathrm{Z}}
\renewcommand\b{\mathrm{B}}
\newcommand\N{\mathbb{N}}
\newcommand\Z{\mathbb{Z}}
\newcommand\Q{\mathbb{Q}}
\newcommand\bI{\mathbb{I}}
\newcommand\bL{\mathbb{L}}
\newcommand\bS{\mathbb{S}}
\newcommand\C{\mathcal{C}}
\newcommand\cD{\mathcal{D}}
\newcommand\cN{\mathcal{N}}
\renewcommand\O{\mathscr{O}}
\newcommand\sP{\mathscr{P}}
\newcommand\fX{\mathfrak{X}}
\renewcommand\L{\Lambda}
\newcommand\m{\mathfrak{m}}
\newcommand\fp{\mathfrak{p}}
\newcommand\ext{\mathscr{E}\!\mathit{xt}}
\newcommand\Ho{\mathrm{Ho}}
\newcommand\Ring{\mathrm{Ring}}
\newcommand\Alg{\mathrm{Alg}}
\newcommand\Mod{\mathrm{Mod}}
\newcommand\Hom{\mathrm{Hom}}
\newcommand\HHom{\underline{\mathrm{Hom}}}
\newcommand\Ext{\mathrm{Ext}}
\newcommand\cone{\mathrm{cone}}
\newcommand\coker{\mathrm{coker\,}}
\newcommand\im{\mathrm{Im\,}}
\newcommand\Ab{\mathrm{Ab}}
\newcommand\Spec{\mathrm{Spec}\,}
\newcommand\Dec{\mathrm{Dec}\,}
\newcommand\DEC{\mathrm{DEC}\,}
\newcommand\Set{\mathrm{Set}}
\newcommand\Lim{\varprojlim}
\newcommand\LLim{\varinjlim}
\DeclareMathOperator*{\holim}{holim}
\newcommand\into{\hookrightarrow}
\newcommand\onto{\twoheadrightarrow}
\newcommand\xra{\xrightarrow}
\newcommand\bt{\bullet}
\newcommand\by{\times}
\newcommand\Tot{\mathrm{Tot}\,}
\newcommand\diag{\mathrm{diag}\,}
\newcommand\ev{\mathrm{ev}}
\newcommand\ind{\mathrm{ind}}
\newcommand\pro{\mathrm{pro}}
\newcommand\pd{\partial}
\newcommand\gpd{\mathrm{Gpd}}
\newcommand\sk{\mathrm{sk}}
\newcommand\op{\mathrm{opp}}
\newcommand\oR{\mathbf{R}}
\newcommand\uleft\underleftarrow
\newcommand\uline\underline
\newcommand\uright\underrightarrow
\begin{document}

\begin{abstract}
Lurie's representability theorem gives necessary and sufficient conditions for a functor to be an almost  finitely presented derived geometric stack.
 We establish several variants of Lurie's theorem, making the hypotheses easier to verify for many applications.
Provided a derived analogue of Schlessinger's condition holds, the theorem reduces to verifying conditions on the underived part and on cohomology groups.  Another simplification is that functors need only be defined on nilpotent extensions of discrete rings. Finally, there is a  pre-representability theorem, which can be applied to associate explicit geometric stacks to dg-manifolds and related objects.
\end{abstract}

\title{Representability of derived stacks}

\author{J.P.Pridham}
\thanks{The author was supported during this research by  the Engineering and Physical Sciences Research Council [grant number  EP/F043570/1].}
\maketitle

\tableofcontents

\section*{Introduction}

Artin's representability theorem (\cite{Artin}) gives necessary and sufficient conditions for a functor from $R$-algebras to groupoids to be representable by an algebraic Artin stack, locally of finite presentation. In his thesis, Lurie established a similar result not just for derived Artin $1$-stacks, but for derived geometric Artin $n$-stacks.  Explicitly, given a functor $F:s\Alg_R \to \bS$ from simplicial $R$-algebras to simplicial sets, \cite{lurie}  Theorems 7.1.6 and 7.5.1 give necessary and sufficient conditions for $F$ to be representable by a derived geometric Artin $n$-stack, almost of finite presentation over $R$. 

Lurie's Representability Theorem is more natural than Artin's in one important respect: in the derived setting, existence of a functorial obstruction theory is an automatic consequence of left-exactness. 
However, Lurie's theorem can be difficult to verify for problems not explicitly coming from topology. The most basic difficulty can be showing that a functor is homotopy-preserving, or finding a suitable functor which is. It tends to be even more difficult to show that a functor is almost of finite presentation, or to verify that it is a hypersheaf. 
The purpose of this paper is to adapt the representability theorems in \cite{lurie} and \cite{hag2}, simplifying these criteria for a functor $F:s\Alg_R \to \bS$ to be a geometric $n$-stack. 

In \cite{lurie}, the key exactness properties used were  cohesiveness and infinitesimal cohesiveness. These are said to hold for a functor $F: s\Alg_R \to \bS$ if the maps
\[
\theta: F(A\by_BC) \to F(A)\by^h_{F(B)}F(C)
\]
to the homotopy fibre product are weak equivalences for all surjections (resp. nilpotent surjections) $A \onto B$ and $C \onto B$. The key idea of this paper is to introduce a notion more in line with Schlessinger's conditions (\cite{Sch}). We say that $F$ is homotopy-homogeneous if $\theta$ is a weak equivalence for all nilpotent surjections $A \onto B$ and arbitrary maps $C \to B$. 

The first major consequence is Theorem \ref{cohofp}, showing that if $F$ is homotopy-homogeneous, then it is almost finitely presented whenever the restriction $\pi^0(F):\Alg_{\H_0R} \to \bS$ and the cohomology theories $\DD^i_x(F,-)$ of the tangent spaces of $F$ at discrete points $x$ are all finitely presented. This reduces the question to familiar invariants, since the  cohomology groups are usually naturally  associated to the moduli problem. Likewise, Proposition \ref{sheafresult} shows that to ensure that a homotopy-homogeneous functor $F$ is a hypersheaf, it suffices to check that $\pi^0F$ is a hypersheaf and that the 
modules $\DD^i_x(F,-)$ are quasi-coherent.

These results are applied to 
Proposition \ref{cotexists}, which  shows that with certain additional finiteness hypotheses on $\DD^i_x(F)$, a cotangent 
complex and obstruction theory exist for $F$. This leads to Theorem \ref{lurierep}, which replaces Lurie's almost finite presentation condition with those of Theorem \ref{cohofp}. We then obtain Corollary \ref{lurierep2}, which incorporates the further simplifications of  Proposition \ref{sheafresult}.

A key principle in derived algebraic geometry is that the derived structure is no more than an infinitesimal thickening of the underived objects. For instance, every simplicial ring can be expressed as a composite of homotopy square-zero extensions of a discrete ring. Proposition \ref{cNhat} strictifies this result, showing that we can  work with extensions which are nilpotent (rather than just homotopy nilpotent). This approach leads to Theorem \ref{lurierep3}, which shows how the earlier representability results can be reformulated for functors on dg or simplicial rings $A$ for which $A \to \H_0A$ is nilpotent, thereby removing the need for Lurie's nilcompleteness hypothesis.

The last major result is Theorem \ref{lurieprerep}, which shows how to construct representable functors from functors which are not even homotopy-preserving. The key motivation is Example \ref{dgexample}, which constructs explicit derived geometric stacks from Kontsevich's dg manifolds.

The structure of the paper is as follows.

In Section \ref{rep}, we recall Lurie's Representability Theorem, introduce homotopy-homogeneity, and establish the variants Theorem \ref{lurierep} and Corollary \ref{lurierep2} of Lurie's theorem. We also establish Proposition \ref{detectweak}, which identifies weak equivalences between geometric derived $n$-stacks, and Proposition \ref{sqc}, which gives a functorial criterion for strong quasi-compactness.

Section \ref{completesn} then introduces simplicial or dg algebras $A$ for which $A \to \H_0A$ is a nilpotent extension, showing in Theorem \ref{lurierep3} how to re-interpret representability in terms of functor on such algebras.

Finally, Section \ref{prerepsn} introduces the notion of homotopy-surjecting functors; these map square-zero acyclic extensions to  surjections. For any such functor $F$, we construct another functor $\bar{W}\uline{F}$, and  Proposition \ref{settotop} shows that this is homotopy-preserving whenever $F$ is homotopy-homogeneous and homotopy-surjecting. This leads to Theorem \ref{lurieprerep}, which gives sufficient conditions on $F$ for $\bar{W}\uline{F}$ to be a derived geometric $n$-stack.

\section{Representability of derived stacks}\label{rep}

We denote the category of simplicial sets by $\bS$, the category of simplicial rings by $s\Ring$, and the category of simplicial $R$-algebras by $s\Alg_R$. We let $dg_+\Alg_R$ be the category of differential graded-commutative $R$-algebras in non-negative chain degrees. The homotopy category $\Ho(\C)$ of a category $\C$ is obtained by formally inverting weak equivalences.

\subsection{Background}

Given a simplicial ring $R$, a derived geometric $n$-stack over $R$ is a functor
\[
 F:s\Alg_R \to \bS
\]
satisfying many additional conditions.  These are detailed in  \cite{hag2} Chapter 2.2 or \cite{lurie} \S 5.1. A more explicit characterisation in terms of certain simplicial cosimplicial rings is given in \cite{stacks2} Theorem \ref{stacks-bigthm}. However, for the purposes of this paper, these definitions are largely superfluous, since  
it will be enough to consider  functors satisfying Lurie's Representability  Theorem: 

\begin{theorem}\label{lurierep0}
  A homotopy-preserving functor $F: s\Alg_R \to \bS$ is a geometric derived $n$-stack which is almost of finite presentation if and only if 
\begin{enumerate}
 \item The functor $F$ commutes with filtered colimits when restricted to $k$-truncated objects of $s\Alg_R$, for
each $k \ge  0$.
\item For any discrete commutative ring $A$, the space $F(A)$ is $n$-truncated.
\item The functor $F$ is a hypersheaf for the \'etale topology.
\item The functor $F$ is cohesive:
for any pair $A \to C$, $B \to C$ of surjective morphisms in $s\Alg_R$, the
induced map 
\[
 F(A \by_C B) \to F(A) \by_{F(C)}^h F(B)
\]
 is a weak equivalence.
\item The functor $F$ is nilcomplete:
for any $A \in  s\Alg_R$, the natural map $F(A) \to \Lim^h_kF(P_kA)$ is an equivalence, where $\{P_kA\}_k$ denotes the Moore-Postnikov tower of $A$.

\item Let $B$ be a complete, discrete, local, Noetherian $R$-algebra, and $\m \subset B$ the maximal ideal. Then the
natural map $F(B) \to \Lim_n^h F(B/m^n)$ is a weak equivalence.

\item  Let $x \in  F(C)$, where $C$ is a (discrete) integral domain which is finitely
generated as a $\pi_0R$-algebra. 
 For each $i,n$, the tangent module 
\[
 \DD^{n-i}_x(F,C):= \pi_i (F(C\oplus C[-n])\by^h_{F(C)}\{x\})
\]
 is a finitely generated $C$-module.

\item $R$ is a derived G-ring:
\begin{enumerate}
  \item $\pi_0R$ is Noetherian, 
  \item for each prime ideal $\fp \subset \pi_0R$, the $\fp(\pi_0R)_{\fp}$-adic completion of $(\pi_0R)_{\fp}$ is a geometrically regular $\pi_0R$-algebra, and
  \item for all $n$, $\pi_nR$ is a finite $\pi_0R$-module.
\end{enumerate}

\item $R$ admits a dualising module in the sense of
 \cite{lurie} Definition 3.6.1. [For discrete rings, this is equivalent to  a dualising complex. In particular,  $\Z$ and   Gorenstein local rings  are all derived G-rings with dualising modules.]
\end{enumerate}
\end{theorem}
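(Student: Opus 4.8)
The plan is to deduce this statement directly from \cite{lurie} Theorems 7.1.6 and 7.5.1, so that the only real work is reconciling the hypotheses there with conditions (1)--(9) above. First I would recall the shape of Lurie's criterion: a functor on (a suitable localisation of) simplicial commutative rings valued in spaces is a geometric derived $n$-stack almost of finite presentation over $R$ precisely when it is nilcomplete, cohesive and infinitesimally cohesive; is an \'etale hypersheaf; is $n$-truncated on discrete rings; is locally almost of finite presentation, i.e.\ commutes with filtered colimits on $k$-truncated algebras; admits an almost perfect cotangent complex (bounded below, with finitely generated homotopy groups at finite type points); satisfies the integrability hypothesis over complete local Noetherian rings; and is taken over a derived G-ring admitting a dualising module. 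Conditions (1), (2), (3), (4), (5), (6), (8), (9) are literal transcriptions of the corresponding items, once one notes that cohesiveness in the sense of (4) (applied to all surjections, in particular all nilpotent surjections) also supplies infinitesimal cohesiveness.

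The one substantive point is matching condition (7) with the requirement that $F$ carry an almost perfect cotangent complex. Here I would use the fact, automatic for a homotopy-preserving functor that is cohesive and nilcomplete, that $F$ admits a functorial obstruction theory and hence a cotangent complex $\bL_{F,x}$ at every point $x$ (this is the phenomenon emphasised in the introduction, proved in \cite{lurie} Chapter 3). The tangent modules $\DD^{n-i}_x(F,C)$ of (7) are by construction the homotopy groups of $\bL_{F,x}^\vee$, equivalently the cohomology modules of $\bL_{F,x}$; finiteness of all of these over every finitely generated integral domain $C$ over $\pi_0 R$ is, by the standard Noetherian coherence criterion, exactly the assertion that $\bL_{F,x}$ is almost perfect at each such point, and the converse is immediate. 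So (7) is equivalent to Lurie's finiteness hypothesis on the cotangent complex.

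The remaining step is bookkeeping about the ambient framework: \cite{lurie} phrases everything $\infty$-categorically, so one checks that homotopy-invariance of $F:s\Alg_R\to\bS$ lets it descend, and that each of conditions (1)--(9) is stable under this passage; alternatively one can route through the explicit characterisation of \cite{stacks} Theorem \ref{stacks-bigthmd}. The main obstacle I anticipate is not any single deep step --- all the hard analysis is internal to \cite{lurie} --- but rather the careful verification that ``almost perfect cotangent complex'' may be tested on the restricted class of points in (7), namely finitely generated integral domains over $\pi_0 R$, rather than on all finite type points: this uses that almost perfection is local and of finite type, together with Noetherian approximation to reduce general finite type points to domains. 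Once that reduction is in place, the theorem follows by direct citation.
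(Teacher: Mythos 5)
The paper's own proof is the bare citation ``\cite{lurie} Theorem 7.5.1,'' so your proposal takes essentially the same route: it is simply the reconciliation of hypotheses that such a citation implicitly assumes, spelled out. One small caveat worth noting: the ``substantive point'' you flag --- matching condition (7) to an almost-perfect-cotangent-complex hypothesis --- is in fact not a translation you need to supply, because Lurie states the finiteness condition in Theorem 7.5.1 already in the tangent-module form $\pi_i(F(C\oplus C[-n])\times^h_{F(C)}\{x\})$ for finitely generated integral domains $C$ over $\pi_0 R$; the cotangent complex appears in the intermediate Theorem 7.4.1, and the passage between the two formulations is internal to \cite{lurie}. Also a minor inaccuracy: $\DD^{n-i}_x(F,C)$ is an $\Ext$-group of the pulled-back cotangent complex (as the paper's own remark after Definition \ref{totcohodef} records), not literally a homotopy group of a dual; this does not affect the argument.
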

\begin{proof}
\cite{lurie} Theorem 7.5.1.
\end{proof}
 Readers unfamiliar with the conditions  of this theorem should not despair, since the conditions will be explained and considerably simplified over the course of this paper.

\begin{remark}\label{cflurie}
Note that there are slight differences in terminology between \cite{hag2} and \cite{lurie}. In the former, only disjoint unions of affine schemes are $0$-representable, so arbitrary schemes are $2$-geometric stacks, and Artin stacks are $1$-geometric stacks if and only if they have affine diagonal. In the latter, algebraic spaces are $0$-stacks.  A geometric $n$-stack  is called $n$-truncated in \cite{hag2}, and it follows easily that every $n$-geometric stack in \cite{hag2} is $n$-truncated. Conversely, every  geometric $n$-stack  is  $(n+2)$-geometric. 

We can summarise this by saying that for a derived geometric stack $\fX$ to be $n$-truncated means that $\fX \to \fX^{S^{n+1}}$ is an equivalence, or equivalently that $\fX \to \fX^{S^{n-1}}$ is representable by derived algebraic spaces. For $\fX$ to be $n$-geometric means that $\fX \to \fX^{S^{n-1}}$ is representable by disjoint unions of derived affine schemes.

Theorem \ref{lurierep} takes the convention from \cite{lurie}, so ``geometric derived $n$-stack'' means ``$n$-truncated derived geometric stack''.
\end{remark}

\subsection{Tangent spaces and homogeneity}\label{tgtsn}

\begin{definition}\label{sq0def}
We say that  a map $A \to B$ in $s\Ring$ is a square-zero extension if it is surjective, and the kernel $I$ is square-zero, i.e. satisfies $I^2=0$. 
\end{definition}

\begin{lemma}\label{luriesmall}
In $\Ho(s\Alg_R)$, square-zero extensions $A \to  B$ with kernel $I$ correspond up to weak equivalence to the small extensions $A$ of $B$ by $I$ in the sense of \cite{lurie} Definition 3.3.1.
\end{lemma}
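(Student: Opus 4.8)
The plan is to identify both notions with the classification of extensions of $B$ by $I$ by the group $\Ext^1_B(L_{B/R},I)$, where $L_{B/R}$ denotes the (relative) cotangent complex. Recall that, in Lurie's sense, a small extension of $B$ by a $B$-module $I$ is an object $A$ of $\Ho(s\Alg_R)$ fitting into a homotopy-cartesian square in $s\Alg_R$ whose two maps to the trivial square-zero extension $B\oplus I[1]$ are the canonical zero-section $d_0$ and a derivation $\eta\colon B\to B\oplus I[1]$ over $B$; such $A$ are classified, up to equivalence over $B$, by the class of $\eta$ in $\pi_0\Map_B(L_{B/R},I[1])=\Ext^1_B(L_{B/R},I)$. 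It therefore suffices to show that strict square-zero extensions of $B$ by $I$, taken up to weak equivalence, are classified by this same group, compatibly.

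For the implication ``strict $\Rightarrow$ small'', let $A\onto B$ be a strict square-zero extension with kernel $I$; since $I^2=0$, the ideal $I$ is a simplicial $B$-module. Working in the model category $s\Alg_R$ and replacing $A$ and $B$ as needed, the cotangent complex of the square-zero map $A\onto B$ is the shifted conormal module, $L_{B/A}\simeq I[1]$, and composing the transitivity map $L_{B/R}\to L_{B/A}$ with this equivalence gives a derivation $\eta\colon L_{B/R}\to I[1]$. Because $L_{A/R}\otimes^L_AB\to L_{B/R}\to L_{B/A}$ is a cofibre sequence, the pullback of $\eta$ along $A\to B$ is the trivial derivation of $A$; hence the two composites $A\to B\to B\oplus I[1]$ (through $\eta$ and through $d_0$) are homotopic over $B$, which gives a natural map from $A$ to the homotopy pullback $B\times^h_{B\oplus I[1]}B$. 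I would then check this map is a weak equivalence by comparing homotopy groups: the long exact ``Mayer--Vietoris'' sequence of the homotopy pullback and the long exact sequence $\cdots\to\pi_iI\to\pi_iA\to\pi_iB\xrightarrow{\partial}\pi_{i-1}I\to\cdots$ of the extension $A\onto B$ have the same connecting maps (by construction $\partial$ is induced by $\eta$), so the comparison map is an isomorphism on all homotopy groups. Thus every strict square-zero extension is small, with classifying class $\eta$.

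For the converse and the compatibility, I would invoke the classical classification (going back, in the simplicial setting, to Quillen and Illusie) of strict square-zero extensions of $B$ by $I$ in $\Ho(s\Alg_R)$ by the group $\Ext^1_B(L_{B/R},I)$, the class attached to $A\onto B$ being exactly the $\eta$ above. Since Lurie's small extensions of $B$ by $I$ are classified by the same group, the map sending a class to a strict square-zero extension realising it is inverse, up to weak equivalence, to $A\mapsto\eta$; combined with the comparison map of the previous paragraph this yields the bijection between weak-equivalence classes asserted in the lemma, the induced $B$-module structure on the kernel being the one appearing in Lurie's data.

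The main obstacle is the interface between the strict and the derived notions: being square-zero is preserved only up to weak equivalence, which is exactly why the comparison must be phrased in $\Ho(s\Alg_R)$. Concretely, the delicate points are: (i) establishing $L_{B/A}\simeq I[1]$ and setting up the classical classification of \emph{genuine} strict square-zero extensions in the simplicial setting --- in particular, that every class in $\Ext^1_B(L_{B/R},I)$ is realised by an honest strict square-zero extension, and not merely by the homotopy pullback above, whose naive rectification may have a kernel that is only weakly, rather than literally, square-zero; and (ii) the bookkeeping that identifies the two connecting homomorphisms in the homotopy-group comparison.
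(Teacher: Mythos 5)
Your strategy---to identify both notions via the classification of extensions by $\Ext^1_B(L_{B/R},I)$---is genuinely different from the paper's, which gives a direct, elementary construction: embed the kernel $I$ into an acyclic simplicial $B$-module $N$, set $\tilde B := A\oplus_I N$ and $C := \coker(I\to N)$, observe $\tilde B\to B$ is a trivial fibration, and exhibit $A$ on the nose as the pullback $\tilde B\by_{B\oplus C}B$, which is a \emph{homotopy} pullback because $\tilde B\onto B\oplus C$ is a fibration; the converse direction reverses this by strictifying the homotopy pullback along a surjection $N\onto M[-1]$ with $N$ acyclic. No cotangent complex appears at all.

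The central step of your argument contains a false claim. For a square-zero extension $A\onto B$ with kernel $I$, it is \emph{not} true that $L_{B/A}\simeq I[1]$, even for discrete rings. Take $B=k$ a field and $A=k[\epsilon]/\epsilon^2$: the minimal free resolution of $k$ over $A$ is the unbounded complex $\cdots\xra{\epsilon}A\xra{\epsilon}A\to k$, and $L_{k/A}$ has nonzero homotopy in infinitely many degrees (the relation $\epsilon\cdot\epsilon=0$ is not Koszul). What is true is only that the truncation map $L_{B/A}\to\pi_1(L_{B/A})[1]=I[1]$ is an isomorphism on $\pi_0,\pi_1$. You flag establishing $L_{B/A}\simeq I[1]$ as ``delicate point (i)'', but it is not merely unproven---it is wrong, and as stated it would also make the lemma itself circular (since ``$A\to B$ is a small extension'' is essentially the statement that the truncation map factors through an equivalence to a homotopy pullback). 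The forward direction can likely be repaired by using only the truncation map $L_{B/A}\to I[1]$ to produce the derivation $\eta$ and the required null-homotopy over $A$; the real content then moves entirely into your ``delicate point (ii)'', the identification of the two connecting maps, which is precisely the computation the paper's explicit construction avoids. For the converse you ``invoke the classical classification of strict square-zero extensions of $B$ by $I$ in $\Ho(s\Alg_R)$ by $\Ext^1_B(L_{B/R},I)$''. In the discrete setting this is Illusie's $\Exal$ classification, but in the simplicial setting the classical results classify the \emph{derived} extensions (Lurie's small extensions), not literal square-zero surjections of simplicial rings; asserting that the same group classifies the strict ones is the lemma, not a citable input, and the strictification problem you raise at the end is exactly what the paper's acyclic-module trick resolves.
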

\begin{proof}
Given a square-zero extension $A \to B$, observe that the kernel $I$ is a simplicial $B$-module. Choose an inclusion $i:I \into N$ of simplicial $B$-modules, with $N$ acyclic, and set $\tilde{B}$ to be the simplicial algebra $A\oplus_{I}N$. Then $\tilde{B} \to B$ is a trivial fibration, and if we let $C=\coker i$, then
$$
A= \tilde{B}\by_{B\oplus C} B.
$$

Now we need only observe that $\Omega C \simeq I$ in the notation of \cite{hag2}, so $\tilde{B} \to   B\oplus C$ gives a homotopy derivation $s: B \to I[-1]$, with 
$$
A= B\oplus_{s}I:= B\by^h_{\id +s, B\oplus I[-1],\id +0 }B,
$$
so $A \to B$ is a small extension in Lurie's sense.

Conversely, given a homotopy derivation $s: B \to M[-1]$, we may assume that $B$ is cofibrant, so lift this to a morphism $B \to B \oplus  M[-1]$ of simplicial $R$-algebras. Taking a surjection $f:N \onto  M[-1]$ of simplicial $B$-modules, with $N$ acyclic, we see that
$$
B\by^h_{\id +s, B\oplus M[-1],\id +0 }B\simeq B\by_{\id +s, B\oplus M[-1],\id}(B\oplus N),
$$
since the right-hand map is a fibration. But this maps surjectively to $B$, with kernel $I:=\ker f$, which is a $B$-module, so square-zero. Moreover $M \simeq I$, so the respective square-zero extensions are by the same module.
\end{proof}

\begin{remark}\label{obskey}
Given a simplicial ring $B$ and a simplicial $B$-module $M$, we may define a derivation $t:B \oplus M[-1] \to M[-1]$ given by $0$ on $B$, and by the identity on $M[-1]$. The corresponding square-zero extension $(B\oplus M[-1])\oplus_t M$ is equivalent to $B$. In particular, this means that $B \to B \oplus M[-1]$ is weakly equivalent to a square-zero extension.
\end{remark}

\begin{definition}
Say that a functor between model categories is homotopy-preserving if it maps weak equivalences to weak equivalences. 
\end{definition}

\begin{definition}\label{hhgsdef}
We say that a 
functor 
$$
F:s\Alg_R \to \bS
$$
is homotopy-homogeneous if  for all square-zero extensions $A \to B$ and all maps $C \to B$ in $s\Alg_R$, the natural map
$$
F(A\by_BC) \to F(A)\by^h_{F(B)}F(C)
$$
to the homotopy fibre product is a weak  equivalence.
\end{definition}

\begin{definition}\label{Tdef}
Given a homotopy-preserving homotopy-homogeneous  functor $F:s\Alg_R \to \bS$, a simplicial $R$-algebra $A$, and a point $x \in F(A)$, define the tangent functor
$$
T_x(F/R): s\Mod_A \to \bS
$$
by
$$
T_x(F/R)(M):= F(A\oplus M)\by^h_{F(A)}\{x\}.
$$
\end{definition}

\begin{lemma}\label{pi0tan}
If $F$ satisfies the conditions of Definition \ref{Tdef}, then up to non-canonical weak equivalence, $T_x(F/R)(M)$ is an invariant of the class $[x] \in \pi_0F(A)$. 
\end{lemma}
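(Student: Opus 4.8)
The plan is to reduce the assertion to the elementary fact that, for a map of simplicial sets $p\colon E\to B$, the homotopy fibre of $p$ over a point $b\in B$ depends only on the path-component of $b$, up to a weak equivalence which is induced by a choice of path and hence non-canonical.

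First I would fix representatives $x,y\in F(A)$ of a single class in $\pi_0F(A)$ and note that the relevant map
\[
p_M\colon F(A\oplus M)\lra F(A),
\]
induced by the projection $A\oplus M\to A$, is functorial in $M\in s\Mod_A$ and is independent of the choice of point; so $T_x(F/R)(M)$ and $T_y(F/R)(M)$ are precisely the homotopy fibres of the one map $p_M$ over $x$ and over $y$. After replacing $F(A)$ by a fibrant model if necessary (harmless for a homotopy-preserving functor), I would compute these homotopy fibres by the usual path-space construction: with $P_xF(A)$ the space of paths in $F(A)$ with final endpoint $x$, equipped with the fibration $P_xF(A)\to F(A)$ evaluating at the initial endpoint, one has $T_x(F/R)(M)\simeq F(A\oplus M)\by_{F(A)}P_xF(A)$ naturally in $M$, and likewise for $y$.

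Next, since $[x]=[y]$ I would choose a path $h$ from $x$ to $y$ in $F(A)$; concatenation with $h$ gives a map $P_xF(A)\to P_yF(A)$ over $F(A)$, which is a weak equivalence of spaces over $F(A)$ because both total spaces are contractible and the map is compatible with the initial-endpoint fibrations. Pulling back along $p_M$ then produces a weak equivalence $T_x(F/R)(M)\xrightarrow{\sim}T_y(F/R)(M)$, and since $h$ (hence the concatenation map) is fixed once and for all, independently of $M$, these assemble into a natural weak equivalence of functors $T_x(F/R)\simeq T_y(F/R)$ on $s\Mod_A$. The dependence on $h$ is what makes the equivalence non-canonical.

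The only thing that needs genuine care, and hence the main (though modest) obstacle, is the naturality in $M$: one must pin down a model of the homotopy fibre that is visibly functorial in $M$ and check that the comparison map is manufactured purely from $h$ together with the $M$-functorial datum $p_M$, so that no auxiliary choices intervene. No deeper input is required here; the homotopy-preserving and homotopy-homogeneous hypotheses of Definition \ref{Tdef} enter only to make $T_x(F/R)$ a well-defined functor on $\Ho(s\Mod_A)$ to begin with, and play no further role in this lemma.
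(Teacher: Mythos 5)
Your argument is correct and rests on the same core idea as the paper's: a choice of path from $x$ to $y$ in $F(A)$ yields an equivalence between the homotopy fibres of $F(A\oplus M)\to F(A)$ over $x$ and over $y$, naturally in $M$ because the path is chosen once and for all. The paper implements this by exhibiting both tangent spaces as equivalent to the homotopy pullback $\Delta^1\by^h_{\gamma,F(A)}F(A\oplus M)$ (and then observes that higher simplices give a weak local coefficient system, which it reuses in Lemma~\ref{tanlocsys}), while you use path-space fibrations and concatenation with $h$ -- a cosmetic rather than substantive difference.
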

\begin{proof}
Given a path $\gamma:\Delta^1 \to F(A)$, we have equivalences
$$
T_{\gamma(0)}(F/R)(M)\simeq \Delta^1\by_{\gamma, F(A)}^hF(A\oplus M) \simeq T_{\gamma(1)}(F/R)(M),
$$
so paths in $F(A)$ give equivalences between stalks. Considering maps $\Delta^2 \to F(A)$, we see that these equivalences satisfy the cocycle condition up to homotopy, with the maps $\Delta^n \to F(A)$ giving higher homotopies.
Thus $T_{(-)}(F/R)(M)$ forms a weak local coefficient system on $F(A)$.
\end{proof}

\begin{definition}\label{normdef}
 Given a simplicial abelian group $A_{\bt}$, we denote the associated normalised chain complex  by $NA$. Recall that this is given by  $N(A)_n:=\bigcap_{i>0}\ker (\pd_i: A_n \to A_{n-1})$, with differential $\pd_0$. Then $\H_*(NA)\cong \pi_*(A)$.

Using the Eilenberg-Zilber shuffle product,  normalisation  $N$ extends to a functor
$$
N:s\Ring \to dg_+\Ring
$$ 
from simplicial rings to differential graded rings in non-negative chain degrees.
\end{definition}

By the Dold-Kan correspondence, normalisation gives an equivalence of categories between simplicial abelian groups and chain complexes in non-negative  degrees. For any $R \in s\Ring$, this extends to an equivalence
$$
s\Mod_R \to dg_+\Mod_{NR}
$$
between simplicial $R$-modules and $NR$-modules in non-negatively graded chain complexes.

\begin{definition}\label{suspdef} 
Given a chain complex $V$, let $V[r]$ be the chain complex $V[r]_i:= V_{r+i}$. Given a simplicial abelian group $M$ and $n\ge 0$, let $M[-n]:= N^{-1}(NM[-n])$, where $N^{-1}$ is inverse to the normalisation functor $N$.

For $R \in s\Ring$, observe that this extends to a functor $[-n]: s\Mod_R \to s\Mod_R$. Note that $\pi_iM[-n]= \pi_{i-n}M$.\end{definition}

\begin{lemma}\label{adf}
For all  $F,A,M,x$ as in Definition \ref{Tdef}, there is a natural abelian structure on $\pi_iT_xF(M)$. Moreover,  there are natural isomorphisms
$$
\pi_iT_x(F/R)(M)\cong \pi_{i+1}T_xF( M[-1]),
$$
where homotopy groups are defined relative to the basepoint $0$ given by the image of $T_x(F/R)(0) \to T_x(F/R)(M)$.
\end{lemma}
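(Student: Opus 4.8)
The plan is to express $T_x F(M)$ as a homotopy fibre of $F(\iota)$, where $\iota\colon A\to A\oplus M[-1]$ is the canonical inclusion, and then to exploit the retraction $p\colon A\oplus M[-1]\to A$, $p\iota=\id_A$, against it.

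First I would record the algebraic input. Applying Lemma \ref{luriesmall} to the zero homotopy derivation $A\to M[-1]$, which classifies the split square-zero extension, shows that in $\Ho(s\Alg_A)$ the trivial square-zero extension $A\oplus M$ is weakly equivalent to the homotopy fibre product $A\times^h_{A\oplus M[-1]}A$ in which both legs are $\iota$, in such a way that the projection $A\oplus M\to A$ corresponds to a projection of this homotopy fibre product. By Remark \ref{obskey}, $\iota$ is weakly equivalent, as a morphism, to a genuine square-zero extension $\tilde A\to A\oplus M[-1]$ with $\tilde A\simeq A$; since such an extension is surjective, hence a fibration, the homotopy fibre product is represented by the strict one $\tilde A\times_{A\oplus M[-1]}\tilde A$, so homotopy-homogeneity together with the fact that $F$ is homotopy-preserving give
$$
F(A\oplus M)\;\simeq\;F(\tilde A)\times^h_{F(A\oplus M[-1])}F(\tilde A)\;\simeq\;F(A)\times^h_{F(\iota),\,F(A\oplus M[-1]),\,F(\iota)}F(A).
$$
Taking homotopy fibres over $x$ — and using that the structure map $F(A\oplus M)\to F(A)$ of Definition \ref{Tdef} is the one induced by the projection $A\oplus M\to A$, hence a projection of the homotopy fibre product — yields a natural weak equivalence $T_x F(M)\simeq\mathrm{hfib}_{F(\iota)(x)}\big(F(\iota)\colon F(A)\to F(A\oplus M[-1])\big)$, compatibly with the basepoints.

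Next I would compare the fibre sequence $T_x F(M)\simeq\mathrm{hfib}_{F(\iota)(x)}(F(\iota))\to F(A)\xrightarrow{F(\iota)}F(A\oplus M[-1])$ with the fibre sequence $T_x F(M[-1])=\mathrm{hfib}_x(F(p))\to F(A\oplus M[-1])\xrightarrow{F(p)}F(A)$, where $p$ is the projection. Since $F(p)\circ F(\iota)=\id_{F(A)}$, the map $F(\iota)_*$ is split injective and $F(p)_*$ split surjective on all homotopy groups at the basepoints in question; feeding this into the two associated long exact sequences identifies both $\pi_i\,\mathrm{hfib}_{F(\iota)(x)}(F(\iota))$ and $\pi_{i+1}\mathrm{hfib}_x(F(p))$ naturally with $\ker\big(F(p)_*\colon\pi_{i+1}F(A\oplus M[-1])\to\pi_{i+1}F(A)\big)$, whence the asserted natural isomorphism $\pi_i T_x(F/R)(M)\cong\pi_{i+1}T_x F(M[-1])$. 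For the abelian structure, applying this isomorphism twice gives $\pi_i T_x F(M)\cong\pi_{i+2}T_x F(M[-2])$, realising it as a second homotopy group, hence abelian; for $i\ge1$ the isomorphisms are isomorphisms of groups, so this recovers the usual structure on $\pi_i$, and for $i=0$ one takes the transported structure. (Alternatively one gets the abelian structure directly: homotopy-homogeneity applied to $A\oplus M_1\oplus M_2=(A\oplus M_1)\times_A(A\oplus M_2)$ shows $M\mapsto T_x F(M)$ preserves finite homotopy products and sends $0$ to a point, hence carries the abelian group object $M$ of $s\Mod_A$ to an abelian group object of $\Ho(\bS)$.)

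The real work is confined to the first two steps: one must track the weak equivalences and, above all, the basepoints and structure maps precisely enough to be sure the homotopy fibre obtained is genuinely $T_x F(M)$ with its distinguished point $0$; and the $i=0$ case of the long-exact-sequence comparison needs care, since there ``cokernel'' must be read as an orbit set and one invokes the semidirect-product decomposition $\pi_1 F(A\oplus M[-1])\cong\ker(F(p)_*)\rtimes\pi_1 F(A)$. Granting these, the isomorphism and the abelian structure follow formally.
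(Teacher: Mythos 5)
Your argument is correct and shares the paper's central device: via homotopy-homogeneity together with Lemma \ref{luriesmall}/Remark \ref{obskey}, rewrite $F(A\oplus M)\simeq F(A)\by^h_{F(A\oplus M[-1])}F(A)$. Where you diverge is in extracting the degree shift. The paper observes that this pullback square is a square of spaces augmented over $F(A)$ (via $p_M$, $\id$ and $p$), so taking homotopy fibres over $x$ gives directly $T_x(F/R)(M)\simeq 0\by^h_{T_x(F/R)(M[-1])}0$, the based loop space of $T_x(F/R)(M[-1])$, from which $\pi_iT_xF(M)\cong\pi_{i+1}T_xF(M[-1])$ is immediate in all degrees at once. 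You instead identify $T_xF(M)$ with $\mathrm{hfib}_{F(\iota)(x)}(F(\iota))$ and then play the long exact sequences of $F(\iota)$ and of $F(p)$ against each other using the splitting $F(p)F(\iota)=\id$; this is valid but longer, and forces you to treat $\pi_0$ and $\pi_1$ separately (orbit sets, semidirect products). The shortcut you stop just short of is that for any retraction $p\iota=\id$, the fibre sequence $\mathrm{hfib}(\iota)\to\mathrm{hfib}(p\iota)\to\mathrm{hfib}(p)$ has contractible middle term, so $\mathrm{hfib}(\iota)\simeq\Omega\,\mathrm{hfib}(p)$ outright; this collapses your LES comparison into the paper's loop-space identification and absorbs the low-degree delicacies for free. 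Your alternative (b) for the abelian structure --- that $T_xF(-)$ preserves finite homotopy products and therefore sends the abelian group object $M$ to one in $\Ho(\bS)$ --- is exactly the paper's argument, which simply writes out the addition map $A\oplus(M\oplus M)\to A\oplus M$ explicitly; option (a) (transport from $\pi_{i+2}$) is also correct but produces the group structure less canonically.
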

\begin{proof}
Addition in $M$ gives a  morphism 
$$
(A\oplus M) \by_A(A\oplus M) \cong A\oplus (M\oplus M) \to A \oplus M,
$$ 
 so the corresponding map 
$$
F(A\oplus M)\by^h_{F(A)} F(A\oplus M) \to F(A\oplus M).
$$
induces  an  abelian structure on $\pi_iT_xF(M)$.

For the second part, observe that $M= 0\by^h_{ M[-1]}0$, and that $0 \to M[-1]$ is surjective (in the sense that it is surjective on $\pi_0$), so 
$$
F(A\oplus M) \simeq F(A)\by^h_{F(A\oplus M[-1])}F(A)
$$
by homotopy-homogeneity, giving 
$$
T_x(F/R)(M)\simeq 0\by^h_{T_x(F/R)( M[-1])}0.
$$
Thus $\pi_iT_x(F/R)(M)\cong \pi_{i+1}T_x(F/R)( M[-1])$, as required.
\end{proof}

\begin{definition}\label{totcohodef} 
For all  $F,A,x$ as above, and all simplicial $A$-modules $M$, define
$$
\DD^{n-i}_x(F,M):= \pi_i (T_x(F/R) (M[-n])),
$$
observing that this is well-defined, by Lemma \ref{adf}.
\end{definition}

\begin{remark}
Observe that if $F$ is a derived geometric $n$-stack, and $x: \Spec A \to F$ over $\Spec R$, then $\DD^j_x(F,M)= \Ext^j_A(x^*\bL_{\bt}^{F/\Spec R}, M)$, for $\bL_{\bt}^{F/R}$ the cotangent complex of $F$ over $R$. 
\end{remark}

\begin{lemma}\label{tantrans}
For $F, A,x$ as above, with  $f:A \to B$ a morphism of simplicial $R$-algebras, and $M$ a simplicial $B$-module, there are natural isomorphisms
$$
T_x(F/R)(f_{*}M) \simeq T_{f_*x}(F/R)(M),
$$
and hence $\DD^j_x(F, f_*M) \cong \DD^j_{f_*x}(F, M)$.
\end{lemma}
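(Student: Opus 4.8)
The plan is to exhibit the base-changed tangent space as an iterated homotopy fibre product and then apply homotopy-homogeneity directly. The starting point is that restriction of scalars along $f$ comes from a pullback square of simplicial rings: the trivial square-zero extension $B\oplus M \to B$ pulls back along $f\colon A \to B$ to give an isomorphism
$$
A\by_B(B\oplus M)\cong A\oplus f_*M
$$
of simplicial $A$-algebras, visibly natural in $M$ (an element of the fibre product is a pair $(a,m)$, with multiplication $(a,m)(a',m')=(aa', am'+a'm)$ using the $A$-module structure on $f_*M$). Since $B\oplus M\to B$ is a square-zero extension, homotopy-homogeneity of $F$ applies to this square and yields a natural weak equivalence
$$
F(A\oplus f_*M)\xra{\sim} F(A)\by^h_{F(B)}F(B\oplus M).
$$

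Next I would form the homotopy fibre over $x$. Applying $(-)\by^h_{F(A)}\{x\}$ to both sides, the left-hand side becomes $T_x(F/R)(f_*M)$ by Definition \ref{Tdef}, while the right-hand side is the homotopy limit of the diagram
$$
\{x\}\to F(A)\xleftarrow{=}F(A)\xra{F(f)}F(B)\leftarrow F(B\oplus M).
$$
Since $\{x\}\by^h_{F(A)}F(A)\simeq\{x\}$ along the identity map, this collapses to $\{x\}\by^h_{F(B)}F(B\oplus M)$, in which the structure map $\{x\}\to F(B)$ is $F(f)$ evaluated at $x$, i.e.\ it picks out $f_*x$; hence the right-hand side is $T_{f_*x}(F/R)(M)$. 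Every manipulation used is functorial in $M$, so the composite equivalence $T_x(F/R)(f_*M)\simeq T_{f_*x}(F/R)(M)$ is natural. For the statement on the modules $\DD^j$, one notes that restriction of scalars commutes with the shift $[-n]$ (both defined via normalisation), so $(f_*M)[-n]=f_*(M[-n])$; applying the equivalence to $M[-n]$ and taking $\pi_i$ with respect to the zero section — which is preserved, the equivalence being natural in $M$ — gives $\DD^j_x(F,f_*M)\cong\DD^j_{f_*x}(F,M)$.

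The only real subtlety, and where I expect to have to be careful, is the bookkeeping of the iterated homotopy limit: one must track which leg is the identity $F(A)\to F(A)$, and check that after collapsing the redundant factor the surviving map $\{x\}\to F(B)$ really is $F(f)$ and not some other composite. A tidy way to organise this is to use that a pasting of homotopy-cartesian squares is homotopy-cartesian, so that $T_x(F/R)(f_*M)$ is identified with the homotopy fibre over $f_*x$ of $F(A\oplus f_*M)\by^h_{F(A)}F(A)\to F(B)$, which is $T_{f_*x}(F/R)(M)$ once the square-zero equivalence is used again; this is routine, but it is where the naturality of all the identifications has to be verified simultaneously. I do not anticipate any obstacle beyond this.
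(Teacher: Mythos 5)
Your proof is correct and follows the same route as the paper: the key step in both is the identification $A\oplus f_*M \cong A\times_B(B\oplus M)$ together with homotopy-homogeneity applied to the square-zero extension $B\oplus M\to B$. The paper states this in one line and leaves the passage to fibres over $x$ implicit; your version just fills in that bookkeeping explicitly.
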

\begin{proof}
This is just the observation that $A\oplus f_*M = A\by_B (B\oplus M)$, so $F(A\oplus f_*M)\simeq F(A)\by^h_{F(B)}F(B\oplus M)$.
\end{proof}

\begin{lemma} \label{tanlocsys}
If  $ X :s\Alg_R \to \bS$ is homotopy-preserving and homotopy-homogeneous,  take 
an object $A \in s\Alg_R$ and an $A$-module $M$.
Then there is a local coefficient system 
$$
\DD^*(X,M)
$$
on the simplicial set $X(A)$, whose stalk at $x \in X(A)$ is $\DD^*_x(X,M)$. In particular, $\DD^*_x(X,M)$ depends (up to non-canonical isomorphism) only on the image of $x$ in $\pi_0X(A)$.
\end{lemma}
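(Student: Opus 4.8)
The plan is to obtain $\DD^*(X,M)$ by applying homotopy groups to the weak local coefficient system of spaces already produced in (the proof of) Lemma \ref{pi0tan}. Fix $A \in s\Alg_R$ and an $A$-module $M$, and for each $n \ge 0$ apply Lemma \ref{pi0tan} with $M$ replaced by $M[-n]$: the stalks $T_{(-)}(X/R)(M[-n])$ fit together into a weak local coefficient system on $X(A)$, in that a path $\gamma\colon \Delta^1 \to X(A)$ induces a weak equivalence $T_{\gamma(0)}(X/R)(M[-n]) \simeq T_{\gamma(1)}(X/R)(M[-n])$, maps $\Delta^2 \to X(A)$ give the cocycle condition up to homotopy, and higher simplices give the higher coherences. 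This system is moreover canonically pointed: the basepoint $0 \in T_x(X/R)(M[-n])$ used in Definition \ref{totcohodef} is the image of $T_x(X/R)(0)$ under the map induced by $0 \to M[-n]$, and this is natural with respect to the equivalences above, so the chosen basepoints are transported to one another.

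Applying the functor $\pi_i(-,0)$ to this pointed weak local coefficient system collapses all the higher homotopies, so the cocycle-up-to-homotopy data becomes a genuine functor from the fundamental groupoid of $X(A)$ to groups; that is, $x \mapsto \pi_i(T_x(X/R)(M[-n]),0) = \DD^{n-i}_x(X,M)$ is a local coefficient system on $X(A)$. (Equivalently, and more concretely, one may replace $X(A\oplus M[-n]) \to X(A)$ by a fibration with section, the section coming from the zero-section $A \to A\oplus M[-n]$, and invoke the standard fact that the fibrewise homotopy groups of a fibration with section form a local coefficient system on the base.) By Lemma \ref{adf} the group $\DD^j_x(X,M)$ is, up to natural isomorphism, independent of the chosen representation $j = n-i$; replacing $M[-n]$ by $M[-n-2]$ if necessary we may assume $i \ge 2$, so that $\DD^j_x(X,M)$ is abelian, and the transition isomorphisms of the local system are group homomorphisms, being induced by basepoint-preserving homotopy equivalences of fibres. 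This defines the local coefficient system $\DD^j(X,M)$, with stalk $\DD^j_x(X,M)$ at $x$ by construction.

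Finally, if $x,x' \in X(A)$ have the same image in $\pi_0 X(A)$, choose any path between them in $X(A)$; the associated transition isomorphism of $\DD^*(X,M)$ gives an isomorphism $\DD^*_x(X,M) \cong \DD^*_{x'}(X,M)$, necessarily non-canonical since it depends on the chosen path. I do not expect a substantial obstacle here: the only points requiring care are the bookkeeping with the basepoint $0$ — verifying it is preserved by path-transport, which is immediate from its description as the image of the functorial map $T_{(-)}(X/R)(0) \to T_{(-)}(X/R)(M[-n])$ — and the passage to abelian groups for small $i$, which is handled by the shift supplied by Lemma \ref{adf}.
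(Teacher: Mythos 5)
Your proof is correct and follows essentially the same approach as the paper: the paper's own proof simply says the statement follows straightforwardly from the proof of Lemma \ref{pi0tan}, and you have unpacked exactly how — transporting the weak local coefficient system of pointed spaces along paths, noting the zero basepoint is carried along naturally, and applying $\pi_i(-,0)$ to strictify it into a genuine local coefficient system of groups, with Lemma \ref{adf} supplying both well-definedness of $\DD^j$ and the freedom to shift so that the relevant $\pi_i$ is abelian.
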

\begin{proof}
This follows straightforwardly from the proof of  Lemma \ref{pi0tan}. 
\end{proof}

\begin{proposition}\label{obs}
If $F:s\Alg_R \to \bS$ is homotopy-preserving and homotopy-homogeneous, then   
  for any square-zero extension $e:I \to A \xra{f} B$ in $ s\Alg_R$, there is a sequence of sets
$$
\pi_0(FA)\xra{f_*} \pi_0(FB) \xra{o_e} \Gamma(FB,\DD^1(F, I)), 
$$  
where $\Gamma(-)$ denotes the global section functor. 
This is exact in the sense that the fibre of $o_e$ over $0$ is the image of $f_*$. 
 Moreover,  there is a group action of $\DD^0_x(F, I)$ on the fibre of $\pi_0(FA) \to \pi_0(FB)$ over $x$, whose orbits are precisely the fibres of $f_*$. 

For any $y \in F_0A$, with $x=f_*y$, the fibre of $FA \to FB$ over $x$ is weakly equivalent to $T_{x}(F/R,I)$, and the sequence above 
extends to a long exact sequence
$$\xymatrix@R=0ex{
\cdots \ar[r]^-{e_*} &\pi_n(FA,y) \ar[r]^-{f_*}&\pi_n(FB,x) \ar[r]^-{o_e}& \DD^{1-n}_{y}(F,I) \ar[r]^-{e_*} &\pi_{n-1}(FA,y)\ar[r]^-{f_*}&\cdots\\ &\cdots \ar[r]^-{f_*}&\pi_1(FB,x) \ar[r]^-{o_e}& \DD^0_{y}(F,I)  \ar[r]^-{-*y} &\pi_0(FA).
}
$$
\end{proposition}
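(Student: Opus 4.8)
The plan is to derive everything from two applications of homotopy-homogeneity, one for the extension $e$ itself and one for its trivial analogue.

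First I would record the structure of $e$ using Lemma~\ref{luriesmall} and Remark~\ref{obskey}: the square-zero extension $e\colon I\to A\xra{f}B$ is classified by a homotopy derivation $s\colon B\to I[-1]$, and $A$ is weakly equivalent to $B\by^h_{\id+s,\,B\oplus I[-1],\,\zeta}B$, where $\zeta\colon B\to B\oplus I[-1]$ is the zero section, which by Remark~\ref{obskey} is weakly equivalent to a genuine square-zero extension of $B\oplus I[-1]$. Applying $F$ (homotopy-preserving and homotopy-homogeneous) turns this into a homotopy-cartesian square, so that $F(A)\simeq F(B)\by^h_{\sigma_*,\,F(B\oplus I[-1]),\,\zeta_*}F(B)$ with $\sigma=\id+s$; both projections to $F(B)$ are sections of $F(\pi)$, for $\pi\colon B\oplus I[-1]\to B$ the augmentation, hence homotopic, and one checks the common map is $f_*$. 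The second application is to the trivial extension: since $I^2=0$ the algebra $A\oplus I$ is the strict fibre product $A\by_BA$, so homotopy-homogeneity for $f$ gives $F(A\oplus I)\simeq F(A)\by^h_{F(B)}F(A)$, under which the augmentation $A\oplus I\to A$ is a projection and the inclusion $A\into A\oplus I$ is the homotopy-diagonal.

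Next I would read off the fibre and the long exact sequence from the second equivalence. For $y\in F_0A$ with $x=f_*y$, the homotopy fibre of $f_*$ over $x$ based at $y$ is identified with $\fib_y(F(A\oplus I)\to F(A))=T_y(F/R)(I)$, compatibly with basepoints (the canonical point $0$ of $T_y(F/R)(I)$ corresponds to $y$ through the diagonal $A\to A\by_BA$); by Lemma~\ref{tantrans} this is $T_x(F/R)(I)$, which gives the claimed description of the fibre. The displayed long exact sequence is then the long exact homotopy sequence of the fibration $T_y(F/R)(I)\to F(A)\xra{f_*}F(B)$ based at $y$, after substituting $\pi_jT_y(F/R)(I)=\DD^{-j}_y(F,I)$ from Definition~\ref{totcohodef} (so $\pi_{n-1}$ of the fibre is $\DD^{1-n}_y(F,I)$), with $o_e$ and $e_*$ the resulting maps. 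At the level of $\pi_0$, the same fibration gives the translation action of the group $\pi_0T_y(F/R)(I)=\DD^0_y(F,I)$ on itself, descending to a transitive action on the fibre of $\pi_0(FA)\to\pi_0(FB)$ over $x$ (namely $\DD^0_y(F,I)$ modulo the image of the connecting map from $\pi_1(FB,x)$), with orbit map $d\mapsto d*y$; and $\DD^0_y(F,I)\cong\DD^0_x(F,I)$ again by Lemma~\ref{tantrans}.

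For the primary obstruction I would use the first equivalence. Given $x\in F_0B$, the vertices $\sigma_*(x)$ and $\zeta_*(x)$ of $F(B\oplus I[-1])$ both lie over $x$ under $F(\pi)$, hence determine vertices of $\fib_x(F(\pi))=T_x(F/R)(I[-1])$; take $o_e(x)\in\pi_0T_x(F/R)(I[-1])=\DD^1_x(F,I)$ to be the class of $\sigma_*(x)$ relative to the basepoint given by $\zeta_*(x)$. Naturality in $x$ and Lemma~\ref{tanlocsys} show that the $o_e(x)$ patch into a section of $\DD^1(F,I)$ over $F(B)$, giving $o_e\colon\pi_0(FB)\to\Gamma(FB,\DD^1(F,I))$; and from the first equivalence $x$ lifts to $\pi_0(FA)$ exactly when $\sigma_*(x)$ and $\zeta_*(x)$ lie in the same component of $F(B\oplus I[-1])$, i.e.\ exactly when $o_e(x)=0$, which is the asserted exactness at $\pi_0(FB)$. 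The homotopy-theoretic core — the two equivalences and the fibration sequence — is formal once homotopy-homogeneity is in hand; the hard part will be the $\pi_0$-level bookkeeping of this last paragraph and the bottom of the sequence: keeping the two descriptions of $F(A)$ consistently based so that $o_e$, $e_*$ and the $\DD^0$-action all line up, and checking that the fibrewise classes $o_e(x)$ genuinely assemble into a global section of $\DD^1(F,I)$.
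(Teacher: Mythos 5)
Your proposal follows essentially the same route as the paper's proof: the paper's own argument also isolates the two applications of homotopy-homogeneity — the square $F(A)\simeq F(\tilde B)\by^h_{F(B\oplus I[-1])}F(B)$ with $\tilde B\simeq B$ (you realise the same square via Remark~\ref{obskey} rather than via the explicit $\tilde B$ of Lemma~\ref{luriesmall}, which is a cosmetic difference) and the square for $A\by_BA$ (the paper writes it as $F(A)\by^h_{F(B)}F(B\oplus I)$, you as $F(A)\by^h_{F(B)}F(A)$; these are the same pullback read off two isomorphic presentations of $A\oplus I$) — and then reads the long exact sequence off the resulting fibration sequence, deferring the remaining bookkeeping. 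Two small cautions on your write-up: the clause ``both projections to $F(B)$ are sections of $F(\pi)$'' does not typecheck as stated (the projections go out of the pullback, not into $F(B\oplus I[-1])$); what you want is that $\sigma_*,\zeta_*$ are sections of $F(\pi)$, whence $F(\pi)\sigma_*\mathrm{pr}_1\simeq F(\pi)\zeta_*\mathrm{pr}_2$ forces $\mathrm{pr}_1\simeq\mathrm{pr}_2$. Also, the equivalence ``$\sigma_*(x)\sim\zeta_*(x)$ in $F(B\oplus I[-1])$ iff in $T_x(F/R)(I[-1])$'' is not automatic for a general homotopy fibre; it holds here precisely because $\sigma_*$ gives a section along which any connecting loop in $F(B)$ can be cancelled, and that one-line argument belongs in the bookkeeping you flag at the end.
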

\begin{proof}
 The proof of \cite{ddt1} Theorem \ref{ddt1-robs} carries over to this context. The main idea  is that 
as in the proof of  Lemma \ref{luriesmall}, there is a trivial fibration $\tilde{B} \to B$, and $A= \tilde{B}\by_{B\oplus I[-1]}B$, with $\tilde{B} \to B\oplus I[-1]$ a square-zero extension. By homotopy-homogeneity,
$$
F(A) \simeq F(\tilde{B})\by_{F(B\oplus I[-1])}^h F(B),
$$
and $F(\tilde{B}) \simeq F(B)$ since $F$ is homotopy-preserving.

The rest of the proof then follows by studying the long exact sequence of homotopy groups associated to the homotopy fibres of
$$
F(\tilde{B}) \to F(B\oplus I[-1])
$$
and of $FA\to FB$, noting that $F(A\by_BA) \simeq F(A)\by_{F(B)}^hF(B \oplus I)$.
\end{proof}


\subsection{Finite presentation}

\begin{definition}
Recall (e.g. from \cite{sht} Definition VI.3.4) that the Moore-Postnikov tower $\{P_nX\}$ of a fibrant simplicial set $X$ is given by
$$
P_nX_q:=\im( X_q \to \Hom(\sk_n\Delta^q, X)),
$$  
with the obvious simplicial structure. Here, $\sk_nK$ denotes the $n$-skeleton of $K$, the simplicial set generated by $K_{\le n}$. 

The spaces $P_nX$ form an inverse system $X\to \ldots \to P_nX \to P_{n-1}X \to \ldots$, with $X=\Lim P_nX$, and 
$$
\pi_q P_nX= \left\{ \begin{matrix} \pi_q X & q \le n \\ 0 & q >n. \end{matrix} \right.
$$ 
The maps $P_nX\to P_{n-1}X$ are fibrations. If $X$ is reduced, then so is $P_nX$.
\end{definition}

\begin{definition}
Define $\tau_{\le k} (s\Alg_R)$ to be the full subcategory of $s\Alg_R$ consisting of objects $A$ with 
$A=P_kA$, the $k$th Moore-Postnikov space.
\end{definition}

\begin{definition}
Define 
the category $ \tau_{\le k}\Ho(s\Alg_R)$ to be the full subcategory of $\Ho(s\Alg_R)$ consisting of objects $A$ with $\pi_iA=0$ for $i>k$.  Note that $ \tau_{\le k}\Ho(s\Alg_R)$ is equivalent to the category $\Ho(\tau_{\le k} (s\Alg_R))$ obtained by localising $\tau_{\le k} (s\Alg_R)$ at weak equivalences.
\end{definition}

\begin{definition}
Recall from \cite{lurie} Proposition 5.3.10 that a homotopy-preserving functor $F: s\Alg_R \to \bS$ is said to be almost of finite presentation if for all $k$ and all filtered direct systems $\{A_{\alpha}\}_{\alpha \in \bI}$ in $ \tau_{\le k} (s\Alg_R)$, the map
$$
{\varinjlim} F(A_{\alpha})  \to F(\varinjlim A_{\alpha})
$$
is a weak equivalence.
\end{definition}

\begin{definition}
Given a functor $F: s\Alg_R \to \bS$, define $\pi^0F: \Alg_{\pi_0R} \to \bS$ by $\pi^0F(A)=F(A)$.
\end{definition}

\begin{theorem}\label{cohofp}
If a homotopy-preserving functor $F: s\Alg_R \to \bS$ is homotopy-homogeneous, then it is almost of finite presentation if and only if the following hold:
\begin{enumerate}
\item the functor $\pi^0F: \Alg_{\pi_0R} \to \bS$  preserves filtered colimits;
 
\item for all finitely generated $A \in \Alg_{\pi_0R}$  and all $x \in F(A)_0$, the functors $\DD^i_x(F, -): \Mod_A \to \Ab$ preserve filtered colimits for all $i>0$.
\end{enumerate}
\end{theorem}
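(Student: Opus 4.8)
The plan is to dispose of the ``only if'' direction at once and then prove ``if'' by induction on the Postnikov truncation level, the essential preliminary being an upgrade of hypotheses (1) and (2) into a single statement about \emph{all} the tangent functors $\DD^i_x(F,-)$.

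For ``only if'': a discrete ring is $0$-truncated, so almost finite presentation gives (1) immediately. For (2), given $A$ finitely generated discrete, $x\in F(A)_0$, $i>0$ and a filtered system $\{M_\gamma\}$ of $A$-modules, pick $n\ge i$; then $A\oplus M_\gamma[-n]$ is $n$-truncated with colimit $A\oplus(\varinjlim M_\gamma)[-n]$, and since $T_x(F/R)(N[-n])=F(A\oplus N[-n])\by^h_{F(A)}\{x\}$ while filtered colimits of simplicial sets commute with homotopy fibre products and with homotopy groups, $\varinjlim\DD^i_x(F,M_\gamma)\to\DD^i_x(F,\varinjlim M_\gamma)$ is a weak equivalence.

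For ``if'', the crux is a first step: (1) and (2) imply that for every discrete $\pi_0R$-algebra $A$ (not assumed finitely generated), every $x\in F(A)_0$ and \emph{every} $i\in\Z$, the functor $\DD^i_x(F,-)\colon\Mod_A\to\Ab$ preserves filtered colimits; equivalently, $\varinjlim_\alpha\DD^i_{x_\alpha}(F,M_\alpha)\cong\DD^i_{x}(F,M)$ for any filtered system of pairs $(A_\alpha,M_\alpha)$ with $M_\alpha$ a discrete $A_\alpha$-module, colimit $(A,M)$, and compatible points $x_\alpha\mapsto x$. For $i\le 0$ this is forced by (1): $\DD^{-j}_x(F,M)=\pi_j\big(F(A\oplus M)\by^h_{F(A)}\{x\}\big)$, the ring $A\oplus M$ is discrete, and $\pi^0F$ applied to $A\oplus M=\varinjlim(A\oplus M_\gamma)$ commutes with the colimit. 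For $i>0$ and general $A$, write $A=\varinjlim_\beta A_\beta$ over its finitely generated subalgebras, use Lemma \ref{pi0tan} together with (1) to arrange that $x$ comes from some $F(A_\beta)$, then apply Lemma \ref{tantrans} to $A_\beta\to A$ to identify $\DD^i_x(F,-)$ with $\DD^i_{x_\beta}(F,(-)|_{A_\beta})$, to which (2) applies as $A_\beta$ is finitely generated; the version for varying pairs follows by applying Lemma \ref{tantrans} once more to the transition maps $A_{\alpha_0}\to A_\alpha$ of the system.

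Now I induct on $k\ge0$ that $F$ preserves filtered colimits on $\tau_{\le k}(s\Alg_R)$; the case $k=0$ is exactly (1). For the inductive step, let $\{A_\alpha\}$ be filtered in $\tau_{\le k}(s\Alg_R)$ with colimit $A$. The Moore--Postnikov tower is functorial and commutes with filtered colimits, so $\{P_{k-1}A_\alpha\}$ lies in $\tau_{\le k-1}(s\Alg_R)$ with colimit $P_{k-1}A$, and the maps $A_\alpha\to P_{k-1}A_\alpha$ are (functorially) square-zero extensions with kernel $\pi_kA_\alpha[-k]$. Applying $F$ and passing to colimits gives a map of fibration sequences: the base map $\varinjlim F(P_{k-1}A_\alpha)\to F(P_{k-1}A)$ is a weak equivalence by induction, and one must compare homotopy fibres over each basepoint. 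By Proposition \ref{obs} the fibre of $F(A_\alpha)\to F(P_{k-1}A_\alpha)$ over $x_\alpha$ is empty unless the obstruction class in $\DD^1_{x_\alpha}(F,\pi_kA_\alpha)$ vanishes, in which case it is $T_{x_\alpha}(F/R)(\pi_kA_\alpha[-k])$; since the obstruction classes are compatible and $\varinjlim_\alpha\DD^1_{x_\alpha}(F,\pi_kA_\alpha)\cong\DD^1_x(F,\pi_kA)$ by the first step, the ``liftable'' loci on source and target agree, and over a liftable component the fibres are $\varinjlim_\alpha T_{x_\alpha}(F/R)(\pi_kA_\alpha[-k])$ and $T_x(F/R)(\pi_kA[-k])$. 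Using Lemma \ref{tantrans} to pass to modules over $\pi_0A_\alpha$, and Definition \ref{totcohodef} to write $\pi_iT_{(-)}(F/R)(\pi_kA_\alpha[-k])=\DD^{k-i}_{(-)}(F,\pi_kA_\alpha)$, the first step (which covers the exponents $k-i$ for all $i\ge0$, including $0$ and negative values) shows these fibres agree, so $\varinjlim F(A_\alpha)\to F(A)$ is a weak equivalence, completing the induction.

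I expect the first step to be the main obstacle: condition (2) is hypothesised only for $i>0$ and finitely generated discrete rings, while the Postnikov induction needs $\DD^i_x(F,-)$ to commute with filtered colimits in \emph{all} cohomological degrees (the degrees $\le 0$ being unavoidable at the bottom of the tangent fibre sequence) and over arbitrary discrete base rings; condition (1) supplies the low degrees, and the reduction to finitely generated subalgebras — legitimate precisely because $\pi^0F$ then commutes with filtered colimits — supplies the rest. The remaining ingredients, functoriality of the Postnikov decomposition and exactness of filtered colimits of spaces, are routine.
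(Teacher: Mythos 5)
Your proof is correct and follows essentially the same strategy as the paper's: reduce to showing the negative-degree tangent functors commute with filtered colimits via condition (1) (the paper cites Lemma \ref{adf}; you unwind this directly, which is fine since $A\oplus M$ is itself discrete), reduce the general-$A$ case to the finitely-generated case via Lemma \ref{tantrans}, and then perform a Postnikov induction where homotopy-homogeneity lets you identify the fibre of $F(A_\alpha)\to F(P_{k-1}A_\alpha)$ with a tangent space whose homotopy groups are the $\DD^j$ over the discrete base $\pi_0A_\alpha$.

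The two small differences are purely presentational. First, you front-load the reduction to finitely generated base rings as a self-contained "first step," whereas the paper performs this reduction inline at the end of the induction; both are the same use of filtered colimits of $\pi^0F$ together with Lemma \ref{tantrans}. Second, you invoke Proposition \ref{obs} as a black box to identify the fibre of $F(A_\alpha)\to F(P_{k-1}A_\alpha)$ and the obstruction, whereas the paper re-derives the fibration explicitly from the homotopy-pullback description $A_\alpha\simeq B_\alpha\by^h_{\pi_0A_\alpha\oplus M_\alpha[-k-1]}\pi_0A_\alpha$ and Remark \ref{obskey}; again these are equivalent. One point worth being careful about when you write this up: $A_\alpha\to P_{k-1}A_\alpha$ is not literally a square-zero extension of simplicial rings (the ideal $\ker(A_\alpha\to P_{k-1}A_\alpha)$ satisfies $I^2\subset I$ but not $I^2=0$); it is only square-zero \emph{up to homotopy}, and applying Proposition \ref{obs} requires first replacing it by a genuine square-zero extension via Lemma \ref{luriesmall} (or, as the paper does, working directly with the homotopy-derivation presentation, which also makes the functoriality in $\alpha$ transparent). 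Also, your obstruction group should read $\DD^1_{x_\alpha}(F,\pi_kA_\alpha[-k])=\DD^{k+1}_{x_\alpha}(F,\pi_kA_\alpha)$, not $\DD^1_{x_\alpha}(F,\pi_kA_\alpha)$; this is a harmless slip since your "first step" covers all degrees. The "only if" direction you supply is correct and is omitted in the paper.
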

\begin{proof}
Note that since $\pi^0F$ preserves filtered colimits, Lemma \ref{adf} implies that  the functors $\DD^i_x(F, -): \Mod_A \to \Ab$  preserve filtered colimits for all $i\le 0$.

We need to prove that $F$ preserves filtered homotopy colimits in the categories $\tau_{\le k} (s\Alg_R)$. We prove this by induction on $k$, the case $k=0$ following by hypothesis. 

Take a filtered direct system  $\{A_{\alpha}\}$ in $\tau_{\le k} (s\Alg_R)$, with homotopy colimit $A$. Let $B_{\alpha}=P_{k-1}A_{\alpha}, B=P_{k-1}A$. Let $M_{\alpha}:= \pi_kA_{\alpha}, M:= \pi_kA$, and observe that these are $\pi_0A_{\alpha}$- and $\pi_0A$-modules respectively.

Now, $A_{\alpha} \to B_{\alpha}$ and $A \to B$ are square-zero extensions up to homotopy (see for instance \cite{hag2} Lemma 2.2.1.1), coming  from essentially unique homotopy derivations $\delta: B_{\alpha} \to M_{\alpha}[-k-1]$, with
$$
A_{\alpha} \simeq B_{\alpha}\by^h_{\id +\delta, B_{\alpha}\oplus M_{\alpha}[-k-1],\id +0 }B_{\alpha}= B_{\alpha}\by^h_{\id +\delta, \pi_0(A_{\alpha})\oplus M_{\alpha}[-k-1],\id +0 } \pi_0(A_{\alpha}).
$$  

Now, by Remark \ref{obskey}, the map $\pi_0(A_{\alpha}) \to  \pi_0(A_{\alpha})\oplus M_{\alpha}[-k-1]$ is weakly equivalent to a square-zero extension.  Thus, since $F$ is homotopy-homogeneous,
$$
F(A_{\alpha}) \simeq F(B_{\alpha})\by^h_{\id +\delta,F(\pi_0(A_{\alpha})\oplus M_{\alpha}[-k-1])}F(\pi_0(A_{\alpha}))
$$
and similarly for $A$. 

We wish to show that $\theta:\varinjlim F(A_{\alpha}) \to F(A)$ is a weak equivalence, and our inductive hypothesis gives $\varinjlim F(B_{\alpha}) \simeq F(B)$. It therefore suffices to consider the homotopy fibre of $\theta$ over $y \in F(B)$, which lifts to some $\tilde{y} \in F(B_{\beta})$. If we let $\tilde{y}_{\alpha}$ be the image of $\tilde{y}$ in $F(B_{\alpha})$, this  gives
$$
\theta_y: \varinjlim \{\tilde{y}_{\alpha}\}\by^h_{\id +\delta,F(\pi_0(A_{\alpha})\oplus M_{\alpha}[-k-1])}F(\pi_0(A_{\alpha})) \to \{y\}\by^h_{\id +\delta,F(\pi_0(A)\oplus M[-k-1])}F(\pi_0(A)).
$$

Since we know that $F(\pi_0(A))= \varinjlim F(\pi_0(A_{\alpha}))$, it suffices to show that for the images $\tilde{x}_{\alpha} \in F(\pi_0A_{\alpha}),\,x \in F(\pi_0A)$ of $\tilde{y}_{\alpha},\,y$, the maps
\[
\varinjlim F(\pi_0(A_{\alpha})\oplus M_{\alpha}[-k-1])\by_{ F(\pi_0(A_{\alpha}))}\{\tilde{x}_{\alpha}\}  \to F(\pi_0(A)\oplus M[-k-1])\by_{ F(\pi_0(A))}\{x\}
\]  
are equivalences. Taking homotopy groups, this becomes
$$
\varinjlim  \DD^{k+1-i}_{\tilde{x}_{\alpha}}(F, M_{\alpha})\to \DD^{k+1-i}_{x}(F, M),
$$
which by Lemma \ref{tantrans} is
$$
\varinjlim  \DD^{k+1-i}_{\tilde{x}}(F, M_{\alpha})\to \DD^{k+1-i}_{\tilde{x}}(F, M),
$$
for $\tilde{x} \in F(\pi_0A_{\beta})$ the image of $\tilde{y}$. 

It will therefore suffice to show that the functors $\DD^i_{\tilde{x}}(F, -): \Mod_{\pi_0A_{\beta}} \to \Ab$ preserve filtered colimits. If we express $\pi_0A_{\beta}$ as a filtered colimit of finitely generated $\pi_0R$-algebras, then the condition that $\pi^0F$ preserves filtered colimits allows us to write $[\tilde{x}]= [f_*z] \in \pi_0F(A)$, for $z \in F(C)_0$, with $C$ a finitely-generated $\pi_0R$-algebra. Then
$$
\DD^i_{\tilde{x}}(F, -)\cong \DD^i_{f_*z}(F, -)\cong\DD^i_{z}(F,f_* -),
$$
which preserves filtered colimits by hypothesis.
\end{proof}

\subsection{Sheaves}

\begin{definition}
Let $\oR\Tot_{\bS}: c\bS \to \bS$ be the derived total space functor from cosimplicial simplicial sets to simplicial sets, given by 
$$
\oR\Tot_{\bS}X^{\bt}= \holim_{\substack{\lla \\ n \in \Delta}} X^n,
$$
as in 
\cite{sht} \S VIII.1.  Explicitly, 
$$
\oR\Tot_{\bS} X^{\bt} =\{ x \in \prod_n (X^n)^{\Delta^n}\,:\, \pd^i_Xx_n = (\pd^i_{\Delta})^*x_{n+1},\,\sigma^i_Xx_n = (\sigma^i_{\Delta})^*x_{n-1}\}, 
$$ 
whenever $X$ is Reedy fibrant. Homotopy groups of the total space are related to a spectral sequence given in \cite{sht} \S VIII.1.
\end{definition}

\begin{definition}
A morphism  $f: A \to B$ in $s\Ring$  is said to be \'etale if $\pi_0f$ is \'etale and the
 maps $\pi_n(A)\ten_{\pi_0(A)}\pi_0(B) \to \pi_n(B)$ are isomorphisms for all $n$. An \'etale morphism  is said to be an \'etale covering if the morphism $\Spec \pi_0f: \Spec \pi_0B \to \Spec \pi_0A$ is a surjection of schemes.
\end{definition}

\begin{definition}\label{reedycofibrantdef}
Given $A \in s\Ring$ and  $B^{\bt} \in (s\Alg_A)^{\Delta}$, we may regard $B$ as a cocontinuous functor $B: \bS \to s\Alg_A$, determined by $B^n=B(\Delta^n)$. Then $ B^{\bt}$ is said to be Reedy cofibrant if the latching morphisms $f_n:B(\pd\Delta^n) \to B^n$ are cofibrations for all $n \ge 0$ (where $B(\pd\Delta^0)=  B(\emptyset)= A$).
\end{definition}

\begin{definition}\label{hypercoverdef}
A Reedy cofibrant object $B^{\bt} \in (s\Alg_A)^{\Delta}$ is  an \'etale hypercover if the latching morphisms are  \'etale coverings. An arbitrary object $C^{\bt} \in (s\Alg_A)^{\Delta}$ is an \'etale hypercover if there exists a levelwise weak equivalence $f: B^{\bt} \to C^{\bt}$, for $B^{\bt}$ a  Reedy cofibrant \'etale hypercover.
\end{definition}

\begin{definition}\label{cechdef}
Given a simplicial hypercover $A \to B^{\bt}$, and a presheaf $\sP$ over $A$, define the cosimplicial complex $\check{C}^{\bt}(B^{\bt}/A, \sP)$ by  $\check{C}^{n}(B^{\bt}/A, \sP) = \sP(B^n)$. 
\end{definition}

\begin{definition}\label{hypersheafdef}
A homotopy-preserving functor  $F: s\Alg \to \bS$  is said to be a hypersheaf for the \'etale topology if it satisfies the following conditions.
\begin{enumerate}
\item
It preserves finite products up to homotopy; this means that for any finite (possibly empty) subset $\{A_i\}$ of $s\Alg_R$, the map
$$
F(\prod A_i) \to \prod F(A_i)
$$
is a weak equivalence.

\item  For all \'etale hypercovers 
$A \to B^{\bt}$, the natural map
$$
F(A) \to \oR\Tot\check{C}(B^{\bt}/A , F)
$$
is a weak equivalence, for $\check{C}$ as in Definition \ref{cechdef}. 
\end{enumerate}
\end{definition}

\begin{remark}\label{stackhyper}
The same definition applies for functors $\Alg_{\pi_0R} \to \bS$. Given a groupoid-valued functor $\Gamma: \Alg_{\pi_0R} \to \gpd$, the nerve $B\Gamma : \Alg_{\pi_0R} \to \bS$ is a hypersheaf if and only if $\Gamma$ is a stack (in the sense of \cite{champs}).
\end{remark}

\begin{definition}\label{nilcompdef}
 Say that a  functor $F: s\Alg \to \b$ is nilcomplete if
for any $A \in  s\Alg_R$, the natural map $F(A) \to \Lim^h_kF(P_kA)$ to the homotopy limit is an equivalence.
\end{definition}

\begin{proposition}\label{sheafresult}
Take a homotopy-homogeneous nilcomplete homotopy-preserving  functor  $F: s\Alg \to \bS$. If
\begin{enumerate}
\item $\pi^0F: \Alg_{\pi_0R} \to \bS$ is a hypersheaf, and
\item for all  $A \in \Alg_{\pi_0R}$, all $x \in F(A)_0$, all $A$-modules $M$ and all \'etale morphisms $f:A \to A'$, the maps
\[
\DD_x^*(F, M)\ten_AA' \to \DD_{fx}^*(F, M\ten_AA')
\]
(induced by Lemma \ref{tantrans}) are isomorphisms,
\end{enumerate}
then $F$ is a hypersheaf.
\end{proposition}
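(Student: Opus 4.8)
The plan is to verify the two clauses of Definition \ref{hypersheafdef} --- preservation of finite products and descent along \'etale hypercovers --- by reducing both, via nilcompleteness and the Postnikov tower, to the corresponding assertions for the underived functor $\pi^0F$. The first step is to observe that, since $F$ is nilcomplete, $F(A)\simeq\Lim^h_kF(P_kA)$ for every $A$; as homotopy limits commute, it is enough to prove, by induction on $k$, that $F(P_kA)\simeq\oR\Tot\check{C}(P_kB^\bullet/P_kA,F)$ for every \'etale hypercover $A\to B^\bullet$, and similarly that $F(P_k\prod_iA_i)\simeq\prod_iF(P_kA_i)$. This makes sense because $P_k$ of an \'etale hypercover is again one: \'etale morphisms are detected on homotopy groups, and $\pi_i(P_kB^n)\cong\pi_iA\ten_{\pi_0A}\pi_0B^n$ for $i\le k$ (vanishing for $i>k$); likewise $P_k(\prod_iA_i)=\prod_iP_kA_i$. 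The base case $k=0$ is precisely hypothesis (1).

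For the inductive step, put $B=P_{k-1}A$ and $M=\pi_kA$. As in the proof of Theorem \ref{cohofp}, the map $P_kA\to B$ is, up to homotopy, the square-zero extension classified by an essentially unique homotopy derivation $\delta\co B\to M[-k-1]$, so Remark \ref{obskey} and homotopy-homogeneity give
$$
F(P_kA)\simeq F(P_{k-1}A)\by^h_{\id+\delta,\,F(\pi_0A\oplus M[-k-1])}F(\pi_0A).
$$
By essential uniqueness of $\delta$ this decomposition is cosimplicially natural along the hypercover $A\to B^\bullet$, with $M$ replaced on level $n$ by the $\pi_0B^n$-module $M^n:=M\ten_{\pi_0A}\pi_0B^n$. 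Applying $\oR\Tot$ and using that homotopy limits commute, the inductive step reduces to three descent statements: (a) for $n\mapsto F(P_{k-1}B^n)$, which holds by the inductive hypothesis; (b) for $n\mapsto F(\pi_0B^n)$, which is hypothesis (1) applied to the \'etale hypercover $\pi_0A\to\pi_0B^\bullet$; and (c) for $n\mapsto F(\pi_0B^n\oplus M^n[-k-1])$, relative to compatible basepoints. The statement about finite products is handled by the same, simpler bookkeeping, using additivity of $\DD^*$ in the module variable (which follows from $S\oplus(N\oplus N')=(S\oplus N)\by_S(S\oplus N')$ together with homotopy-homogeneity) and Lemma \ref{tantrans}.

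For (c), I would pass to homotopy fibres over the base: the homotopy fibre of $F(\pi_0B^n\oplus M^n[-k-1])\to F(\pi_0B^n)$ over the image $x_n$ of a fixed point $x\in F(\pi_0A)_0$ is $T_{x_n}(F/R)(M^n[-k-1])$, with homotopy groups $\DD^{k+1-i}_{x_n}(F,M^n)$. By hypothesis (2) (combined with Lemma \ref{tantrans}) these are canonically $\DD^{k+1-i}_x(F,M)\ten_{\pi_0A}\pi_0B^n$, i.e.\ the sections over $\Spec\pi_0B^n$ of the quasi-coherent sheaf attached to the $\pi_0A$-module $\DD^{k+1-i}_x(F,M)$. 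Since the \v{C}ech complex of a quasi-coherent sheaf along an \'etale hypercover of an affine scheme is a resolution --- \'etale cohomology of a quasi-coherent sheaf agrees with Zariski cohomology, which vanishes in positive degrees on an affine --- the homotopy-totalisation spectral sequence of \cite{sht} \S VIII.1 collapses, giving $\oR\Tot$ of $n\mapsto T_{x_n}(F/R)(M^n[-k-1])$ equivalent to $T_x(F/R)(M[-k-1])$. Since $\oR\Tot$ preserves the relevant homotopy fibre sequences and, by (b), the base totalises to $F(\pi_0A)$, this yields (c), and reassembling the homotopy fibre product completes the induction.

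The step I expect to be the main obstacle is (c): one must check carefully that the homotopy-homogeneity decomposition of $F(P_kA)$ is genuinely cosimplicially natural along the hypercover --- so that $\oR\Tot$ distributes through the homotopy fibre product --- and then identify the totalisation of the fibre terms. The latter is exactly where hypothesis (2) does its work: it is what guarantees that the tangent modules $\DD^j_x(F,M)$ are quasi-coherent, hence have vanishing higher \'etale cohomology on the affines $\Spec\pi_0B^n$, which is what forces the totalisation spectral sequence to degenerate.
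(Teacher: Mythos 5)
Your proposal follows essentially the same route as the paper: nilcompleteness reduces to the Postnikov stages $P_kA$, one inducts on $k$ using the square-zero decomposition $F(P_kA)\simeq F(P_{k-1}A)\by^h_{F(\pi_0A\oplus(\pi_kA)[-k-1])}F(\pi_0A)$ supplied by homotopy-homogeneity, and the inductive step reduces to descent for the tangent space $T_x(F/R,(\pi_kA)[-k-1])$ along the hypercover, which holds because hypothesis (2) identifies the tangent modules as quasi-coherent and fpqc descent makes their \v{C}ech complex a resolution. The only cosmetic difference is that you isolate the finite-products clause of Definition \ref{hypersheafdef} and flag cosimplicial naturality as a point to check, whereas the paper passes immediately to homotopy fibres over a fixed $x\in\pi^0F(A)$ and relies on $\oR\Tot$ commuting with homotopy fibre products to do that bookkeeping implicitly.
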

\begin{proof}
Take an \'etale hypercover $f:A \to B^{\bt}$. 
The first observation to make is that $P_kA \to P_kB^{\bt}$ is also an \'etale hypercover. Assume inductively that
\[
F(P_{k-1}A) \to \oR\Tot\check{C}(P_{k-1}B^{\bt}/P_{k-1}A , F)
\]
is an equivalence (the case $k=1$ following because $\pi^0F$ is a hypersheaf). 
Now $P_{k}A \to P_{k-1}A$ is a square-zero extension up to homotopy  (see for instance \cite{hag2} Lemma 2.2.1.1), coming  from an essentially unique homotopy derivation $\delta: P_{k-1}A \to (\pi_kA)[-k-1]$, with
\[
P_kA \simeq  P_{k-1}A\by^h_{\id + \delta, \pi_0A \oplus (\pi_kA)[-k-1]} \pi_0A.
\]
Since $F$ is homotopy-homogeneous and homotopy-preserving, this means that
\[
F(P_kA) \simeq F(P_{k-1}A)\by^h_{F( \pi_0A \oplus (\pi_kA)[-k-1])} F(\pi_0A).
\]

For the inductive step, it suffices to show that for any point $x \in \pi^0F(A)$, the homotopy fibres of $F(P_kA)$ and of $ \oR\Tot\check{C}(P_kB^{\bt}/P_kA , F)$ over $x$ are weakly equivalent. From the expression above, we see that 
\[
F(P_kA)_x \simeq  F(P_{k-1}A)_x\by^h_{T_x(F/R, (\pi_kA)[-k-1])}\{0\},
\]
and the corresponding statement for the hypercover is
\begin{eqnarray*}
\oR\Tot\check{C}(P_{k}B^{\bt}/P_kA , F) &\simeq& \oR\Tot(\check{C}(P_{k-1}B^{\bt}/P_{k-1}A,F)_{fx}\by^h_{T_{fx}(F/R, (\pi_kB^{\bt})[-k-1])}\{0\} )\\
&\simeq& F(P_{k-1}A)_x\by^h_{ \oR\Tot T_{fx}(F/R, (\pi_kB^{\bt})[-k-1])}\{0\},
\end{eqnarray*}
using the inductive hypothesis and the fact the $\oR \Tot$ commutes with homotopy fibre products. 

This reduces the problem to showing that the map $T_x(F/R, (\pi_kA)[-k-1]) \to\oR\Tot T_{fx}(F/R, (\pi_kB^{\bt})[-k-1])  $ is a weak equivalence. Since the cohomology groups $\DD^*$ commute with \'etale base change, it follows that the map
\[
\oR\Tot T_{x}(F/R, (\pi_kA)[-k-1])\ten_AB^{\bt}\to \oR\Tot T_{fx}(F/R, (\pi_kB^{\bt})[-k-1])
\]
is a weak equivalence. Since $A \to B^{\bt}$ is an \'etale hypercover (and hence an fpqc hypercover), the map
\[
T_x(F/R, (\pi_kA)[-k-1]) \to \oR\Tot T_{x}(F/R, (\pi_kA)[-k-1])\ten_AB^{\bt}
\]
is also a weak equivalence, completing the inductive step.

Finally, since $F$ is nilcomplete, we get
\begin{eqnarray*}
F(A) &\simeq& {\Lim_k}^h F(P_kA)\\
\oR\Tot\check{C}(B^{\bt}/A , F) &\simeq& {\Lim_k}^h \oR\Tot\check{C}(P_{k}B^{\bt}/P_kA , F),
\end{eqnarray*}
which completes the proof.
\end{proof}

\subsection{Representability}

\begin{proposition} \label{cotexists}
Take a Noetherian simplicial ring $R$, and a homotopy-preserving functor $F:s\Alg_R \to \bS$, satisfying the following conditions:
\begin{enumerate}
 
\item For all discrete rings $A$, $F(A)$ is $n$-truncated, i.e. $\pi_iF(A)=0$ for all $i>n$ .

\item\label{cohesive} $F$ is homotopy-homogeneous, i.e. for all square-zero extensions $A \onto C$ and all maps $B \to C$, the map
$$
F(A\by_CB) \to F(A)\by_{F(C)}^hF(B)
$$
is an equivalence.

\item $F$ is nilcomplete, i.e. for all $A$, the map
$$
F(A) \to {\Lim}^h F(P_kA)
$$
is an equivalence, for $\{P_kA\}$ the Postnikov tower of $A$.

\item $F$ is a hypersheaf for the \'etale topology. 

\item $\pi^0F: \Alg_{\pi_0R} \to \bS$  preserves filtered colimits.

\item $R$ admits a dualising module, in the sense of \cite{lurie} Definition 3.6.1. Examples are anything  admitting a dualising complex in the sense of \cite{HaRD} Ch. V, such as $\Z$ or Gorenstein local rings, and any simplicial ring almost of finite presentation over a Noetherian ring with a dualising module.

\item for all finitely generated $A \in \Alg_{\pi_0R}$  and all $x \in F(A)_0$, the functors $\DD^i_x(F, -): \Mod_A \to \Ab$ preserve filtered colimits for all $i>0$.

\item for all finitely generated integral domains $A \in \Alg_{\pi_0R}$  and all $x \in F(A)_0$, the groups $\DD^i_x(F, A)$ are all  finitely generated $A$-modules.
\end{enumerate}

Then there is an almost perfect cotangent complex $\bL_{F/R}$ in the sense of \cite{lurie}. 
 \end{proposition}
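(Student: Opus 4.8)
The plan is to realise $\bL_{F/R}$ through Lurie's cotangent-complex machinery, the work being to deduce Lurie's hypotheses from the weaker ones in the statement.

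\textbf{Step 1: existence.} First I would observe that homotopy-homogeneity implies that $F$ is infinitesimally cohesive in the sense of \cite{lurie}: any nilpotent surjection factors as a finite tower of square-zero extensions, and for a square-zero extension $A\onto B$ together with a second surjection $C\onto B$ the comparison map $F(A\by_BC)\to F(A)\by^h_{F(B)}F(C)$ is a weak equivalence directly from Definition \ref{hhgsdef}. Combined with nilcompleteness (hypothesis (3)) and the accessibility of $F$ coming from almost finite presentation --- which holds by Theorem \ref{cohofp}, i.e. by hypotheses (5) and (7) --- this is exactly the input for the construction of the cotangent complex in \cite{lurie} \S3. Concretely, for each $x\colon\Spec A\to F$ the tangent functor $T_x(F/R)$ is reduced, and applying homotopy-homogeneity to the square-zero extensions $A\oplus M\to A\oplus M''$ attached to a surjection $M\onto M''$ of $A$-modules (with square-zero ``kernel'' $M'$) and the zero section $A\to A\oplus M''$ shows it sends fibre sequences of modules to fibre sequences; with Lemma \ref{adf} this makes the associated spectrum-valued functor $\Mod_A\to\Sp$ exact, so that it is corepresented by an $A$-module $x^*\bL_{F/R}$ and $\DD^j_x(F,M)=\Ext^j_A(x^*\bL_{F/R},M)$. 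Lemma \ref{tantrans} and the tensor--Hom adjunction yield the base-change comparison $f^*(x^*\bL_{F/R})\simeq(f_*x)^*\bL_{F/R}$, and hypothesis (4) supplies \'etale descent for this assignment (as in Proposition \ref{sheafresult}), so the local data glue to a quasi-coherent complex $\bL_{F/R}$ on $F$.

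\textbf{Step 2: almost perfection.} It remains to show $x^*\bL_{F/R}$ is almost perfect for every $x\colon\Spec A\to F$ with $A$ Noetherian, i.e. that it is bounded below with finitely generated homotopy groups. For the bound: if $A$ and $M$ are discrete then so is $A\oplus M$, so $T_x(F/R)(M)$ is the homotopy fibre of a map of $n$-truncated spaces (hypothesis (1)), hence $n$-truncated, whence $\DD^j_x(F,M)=\Ext^j_A(x^*\bL_{F/R},M)=0$ for $j<-n$; this forces $x^*\bL_{F/R}$ to be $(-n)$-connective over discrete $A$, and the general case follows by nilcompleteness and base change. For finite generation one cannot argue directly, since hypothesis (8) bounds only the \emph{dual} modules $\Ext^*_A(x^*\bL_{F/R},A)$. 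Instead I would reduce, by Noetherian induction on $\Spec\pi_0A$ and a coherence/d\'evissage argument (restricting to a dense regular open and treating the closed complement), to the case of a finitely generated integral domain, and then invoke hypothesis (6): $R$, hence every such quotient, admits a dualising complex, and Grothendieck--Serre duality converts the finite generation of $\DD^i_x(F,A)=\Ext^i_A(x^*\bL_{F/R},A)$ from hypothesis (8) into finite generation of the homotopy groups of $x^*\bL_{F/R}$ itself. This is precisely Lurie's recognition criterion for almost perfect complexes in the presence of a dualising module, and applying it finishes the proof.

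\textbf{Expected obstacle.} Step 1 is essentially bookkeeping plus a citation, once homotopy-homogeneity is seen to give infinitesimal cohesiveness. The substance is in Step 2, specifically the passage from pointwise finiteness of the dual modules $\Ext^*(x^*\bL_{F/R},A)$ over integral-domain points to global almost perfection of $\bL_{F/R}$: this is where the d\'evissage over $\Spec\pi_0A$ and the dualising module of hypothesis (6) are indispensable, and it is the only place the argument uses anything beyond the formal exactness and finiteness properties of $F$ established in the earlier sections.
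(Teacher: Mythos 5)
Your proposal matches the paper's proof, which is essentially a citation to \cite{lurie} Theorem 7.4.1: the paper applies Theorem \ref{cohofp} to get almost finite presentation, observes that homotopy-homogeneity gives the infinitesimal cohesiveness needed for Lurie's Theorem 3.6.9 (with Lemma \ref{tantrans} providing the compatibility), and then invokes Lurie's machinery. Your Step 2 correctly unpacks what Lurie's cited theorem does internally --- using the dualising module to convert finite generation of $\DD^i_x(F,A)$ into almost perfection of $x^*\bL_{F/R}$ --- so the argument is the same, just made more explicit.
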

\begin{proof}
This is an adaptation of  \cite{lurie} Theorem 7.4.1. After applying Theorem \ref{cohofp} to show that $F$ is almost of finite presentation,
the only difference is in condition (\ref{cohesive}), where we only consider square-zero extensions $A \to C$ (rather than all surjections), but also allow arbitrary maps $B \to C$ (rather than just surjections). The key observation is that we still satisfy the conditions of \cite{lurie} Theorem 3.6.9, guaranteeing local existence of the cotangent complex, while Lemma \ref{tantrans} provides the required compatibility.
\end{proof}

\begin{theorem}\label{lurierep}
Let $R$ be a derived G-ring admitting a dualising module, and   $F: s\Alg_R \to \bS$ a homotopy-preserving functor. Then $F$ is a geometric derived $n$-stack which is almost of finite presentation if and only if the conditions of Proposition \ref{cotexists} hold, and 
for all complete discrete local Noetherian  $\pi_0R$-algebras $A$, with maximal ideal $\m$, the map
$$
F(A) \to {\Lim}^h F(A/\m^r)
$$
is a weak equivalence.
\end{theorem}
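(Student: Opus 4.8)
The plan is to derive this from Lurie's Representability Theorem \ref{lurierep0} by matching up the two lists of hypotheses, using Theorem \ref{cohofp} and Proposition \ref{cotexists} as the translation mechanism. Homotopy-preservation is assumed throughout, and the conditions of \ref{lurierep0} break up as follows. The properties ``$n$-truncated on discrete rings'', ``hypersheaf for the \'etale topology'', ``nilcomplete'', the effectivity condition over complete discrete local Noetherian $\pi_0R$-algebras, ``$R$ is a derived G-ring'' and ``$R$ admits a dualising module'' each appear verbatim among the conditions of Proposition \ref{cotexists} or among the hypotheses on $R$, so nothing is needed there (and ``derived G-ring'' subsumes the Noetherian hypothesis of \ref{cotexists}). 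The finite generation of the tangent modules $\DD^{n-i}_x(F,C)$ over finitely generated domains $C$ is exactly condition (8) of \ref{cotexists} once one unwinds Definitions \ref{Tdef} and \ref{totcohodef}. Finally, ``almost of finite presentation'' is, by Theorem \ref{cohofp} (applicable since $F$ is homotopy-homogeneous), equivalent to $\pi^0F$ and the $\DD^i_x(F,-)$ for $i>0$ preserving filtered colimits, i.e.\ to conditions (5) and (7) of \ref{cotexists}.

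So the only non-formal matching is between ``cohesive'' (condition (4) of \ref{lurierep0}) and ``homotopy-homogeneous'' (condition (2) of \ref{cotexists}). For the ``only if'' direction this is the assertion that a geometric derived $n$-stack is homotopy-homogeneous: given a square-zero extension $A\to B$ with kernel $I$ and an arbitrary map $C\to B$, the projection $A\by_BC\to C$ is again a square-zero extension --- the base change of $A\to B$, with the same kernel $I$ now regarded as a $C$-module --- so by the presentation of square-zero extensions in Lemma \ref{luriesmall} the claim that $F$ preserves the defining pullback square reduces to infinitesimal cohesiveness, which cohesiveness supplies. (This is the usual fact that the obstruction theory of a geometric stack is functorial.) Together with the previous paragraph, this yields all the conditions of Proposition \ref{cotexists} as well as the completeness condition.

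For the ``if'' direction --- the substantive half --- I would first invoke Proposition \ref{cotexists} to produce an almost perfect cotangent complex $\bL_{F/R}$, noting that its proof (through Theorem \ref{cohofp}) also shows $F$ is almost of finite presentation. I would then re-run the proof of \cite{lurie} Theorem 7.5.1 (which establishes \ref{lurierep0}): that argument produces the cotangent complex via \cite{lurie} Theorem 7.4.1, then climbs the Postnikov tower gluing local lifts by means of the obstruction theory, and finally passes from formal data to genuine representability using the effectivity condition over complete local Noetherian rings together with nilcompleteness. The first step we have already carried out, with Proposition \ref{cotexists} replacing \cite{lurie} 7.4.1; the remaining steps involve $F$ only on square-zero extensions and the associated exact sequences of Proposition \ref{obs}, together with the hypersheaf, nilcompleteness and effectivity properties, so homotopy-homogeneity --- which contains Lurie's infinitesimal cohesiveness --- is precisely what they require. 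Hence the argument goes through and exhibits $F$ as a geometric derived $n$-stack, $n$-truncated and almost of finite presentation as in \cite{lurie}.

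The main obstacle I anticipate is exactly this last point: one must go through Lurie's proof of representability and check that, once the cotangent complex is in hand, full cohesiveness (for arbitrary surjections) is never actually used --- only the square-zero case --- and that homotopy-homogeneity, together with the transitivity Lemma \ref{tantrans} and the obstruction calculus of Proposition \ref{obs}, supplies everything the Postnikov induction and the effectivity step need. This bookkeeping is where the equivalence of the two theorems genuinely lives; the rest is formal translation.
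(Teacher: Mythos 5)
Your ``if'' direction is essentially the paper's: replace \cite{lurie} Theorem~7.4.1 by Proposition~\ref{cotexists}, feed the resulting cotangent complex into \cite{lurie}~Theorem~7.1.6, and observe that homotopy-homogeneity supplies the infinitesimal cohesiveness that the Postnikov/effectivity argument needs. That part is fine.

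The gap is in the ``only if'' direction, i.e.\ in showing that a geometric derived $n$-stack is homotopy-homogeneous. You assert that because $A\by_BC\to C$ is again a square-zero extension, Lemma~\ref{luriesmall} reduces the claim to infinitesimal cohesiveness. But infinitesimal cohesiveness only treats pullbacks along two \emph{surjections}, whereas in homotopy-homogeneity the second leg $C\to B$ is arbitrary. Applying Lemma~\ref{luriesmall} separately to $A\to B$ and to $A\by_BC\to C$ gives you
$F(A)\simeq F(B)\by^h_{F(B\oplus I[-1])}F(B')$ and
$F(A\by_BC)\simeq F(C)\by^h_{F(C\oplus I_C[-1])}F(C')$,
but to compare these to $F(A)\by^h_{F(B)}F(C)$ you would need
$F(C\oplus I_C[-1])\simeq F(C)\by^h_{F(B)}F(B\oplus I[-1])$,
which is itself an instance of the homotopy-homogeneity you are trying to prove (with the square-zero extension $B\oplus I[-1]\to B$ and the arbitrary map $C\to B$). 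So the reduction is circular rather than a genuine reduction. Your parenthetical remark about functoriality of the obstruction theory is where the real content lies, but it is not supplied by Lemma~\ref{luriesmall}: it depends on base-change for the cotangent complex, which in turn is established using the geometric presentation.

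The paper avoids this by a different route, adapting \cite{lurie} Proposition~5.3.7: take a smooth atlas $U\to F$, show that
$U(A)\by^h_{U(C)}U(B)\to F(A)\by^h_{F(C)}F(B)$
is surjective for square-zero $A\onto C$ and arbitrary $B\to C$ (after passing to an \'etale cover to lift a point of $F(B)$ to $U(B)$, this reduces to the surjectivity of $U(A)\to U(C)\by^h_{F(C)}F(A)$ given by smoothness of $U\to F$), and then descend the desired equivalence along the atlas. Homotopy-homogeneity of affines is clear (they preserve strict pullbacks along square-zero, hence surjective, hence fibrations), and the surjectivity argument is what lets one inherit it along the geometric tower. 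Your proof needs this atlas step or an equivalent argument establishing base-change of the cotangent complex; as written it does not prove necessity.
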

\begin{proof}
This is essentially the same as \cite{lurie} Theorem 7.5.1, by combining ibid. Theorem 7.1.6 with Proposition \ref{cotexists} (rather than ibid. Theorem 7.4.1). 

Note that our revised condition (\ref{cohesive}) implies infinitesimal cohesiveness, since, for any square-zero extensions $0 \to M \to \tilde{A} \to A \to 0$, we may set $B$ to be the mapping cone (so $B \simeq A$), and consider the fibre product $\tilde{A} \simeq B\by^h_{A \oplus M[-1]}A$.  
 
To see that the revised condition (\ref{cohesive}) is necessary, we adapt \cite{lurie} Proposition 5.3.7. It suffices to show that for any smooth surjective map $U \to F$ of $n$-stacks, the map
$$
U(A)\by_{U(C)}^h U(B) \to F(A)\by_{F(C)}^h F(B)
$$ 
is surjective, for all square-zero extensions $A \onto C$. Moreover, the argument of \cite{lurie} Proposition 5.3.7 allows us to replace $A\by_CB$ with an \'etale algebra over it, giving a local lift of a point $x \in F(B)$ to $u \in U(B)$. The problem then reduces to showing that
$$
U(A)\by_{U(C)}^h U(B) \to F(A)\by_{F(C)}^h U(B)
$$ 
is surjective, but this follows from pulling back the surjection
$$
U(A) \to U(C)\by_{F(C)}^hF(A)
$$ 
given by the smoothness of $U \to F$.
\end{proof}

\begin{remark}\label{formalexistrk}
The Milnor exact sequence (\cite{sht} Proposition 2.15) gives a  sequence
\[
\bt \to \Lim^1_r \pi_{i+1}F(A/\m^r) \to \pi_i({\Lim}^h F(A/\m^r))  \to \Lim_r \pi_iF(A/\m^r)\to \bt,
\]
which is exact as groups for $i \ge 1$ and as pointed sets for $i=0$. Thus the condition of Theorem \ref{lurierep} can be rephrased to say that the map 
\[
f_0:\pi_0F(A) \to \Lim_r  \pi_0F(A/\m^r)
\] 
is surjective,  that for all $x \in F(A)$ the maps 
\[
f_{i,x}:\pi_i(FA,x) \to \Lim_r  \pi_i(F(A/\m^r),x)
\] 
are surjective for all $i \ge 1$ and that the resulting maps
\[
\ker f_{i,x} \to  \Lim^1_r \pi_{i+1}(F(A/\m^r),x)
\]
are surjective for all $i \ge 0$.

Now, we can say that an inverse system $\{G_r\}_{r \in \N}$ of groups satisfies the Mittag-Leffler condition if for all $r$, the images $\im(G_s \to G_r)_{s\ge r}$ satisfy the descending chain condition. In that case, the usual abelian proof (see e.g. \cite{W} Proposition 3.5.7) adapts to show that $\Lim^1 \{G_r\}_r = 1$. 

Hence, if each system $\{\im( \pi_{i}(F(A/\m^s),x)\to  \pi_{i}(F(A/\m^r),x))\}_{s \ge r}$ satisfies the Mittag-Leffler condition for $i \ge 1$, then the condition of Theorem \ref{lurierep} reduces to requiring that the maps
\[
\pi_iF(A) \to \Lim_r  \pi_iF(A/\m^r)
\]
be isomorphisms for all $i$.
\end{remark}

\begin{corollary}\label{lurierep2}
Let $R$ be a derived G-ring admitting a dualising module (in the sense of \cite{lurie} Definition 3.6.1) and   $F: s\Alg_R \to \bS$ a homotopy-preserving functor. Then $F$ is a geometric derived $n$-stack which is almost of finite presentation if and only if 
 the following conditions hold
\begin{enumerate}
 
\item For all discrete rings $A$, $F(A)$ is $n$-truncated, i.e. $\pi_iF(A)=0$ for all $i>n$ .

\item
$F$ is homotopy-homogeneous, i.e. for all square-zero extensions $A \onto C$ and all maps $B \to C$, the map
$$
F(A\by_CB) \to F(A)\by_{F(C)}^hF(B)
$$
is an equivalence.

\item $F$ is nilcomplete, i.e. for all $A$, the map
$$
F(A) \to {\Lim}^h F(P_kA)
$$
is an equivalence, for $\{P_kA\}$ the Postnikov tower of $A$.

\item $\pi^0F:\Alg_{\pi_0R} \to \bS$ is a hypersheaf for the \'etale topology. 

\item\label{colim1} $\pi_0\pi^0F:  \Alg_{\pi_0R} \to \Set$  preserves filtered colimits.

\item\label{colim2} For all $A \in \Alg_{\pi_0R}$ and all $x \in F(A)$, the functors $\pi_i(\pi^0F,x): \Alg_A \to \Set$  preserve filtered colimits for all $i>0$.

\item \label{shf2}
for all finitely generated integral domains $A \in \Alg_{\pi_0R}$, all $x \in F(A)_0$ and all \'etale morphisms $f:A \to A'$, the maps
\[
\DD_x^*(F, A)\ten_AA' \to \DD_{fx}^*(F, A')
\]
are isomorphisms.

\item for all finitely generated $A \in \Alg_{\pi_0R}$  and all $x \in F(A)_0$, the functors $\DD^i_x(F, -): \Mod_A \to \Ab$ preserve filtered colimits for all $i>0$.

\item for all finitely generated integral domains $A \in \Alg_{\pi_0R}$  and all $x \in F(A)_0$, the groups $\DD^i_x(F, A)$ are all  finitely generated $A$-modules.

\item for all complete discrete local Noetherian  $\pi_0R$-algebras $A$, with maximal ideal $\m$, the map
$$
F(A) \to {\Lim_n}^h F(A/\m^r)
$$
is a weak equivalence (see Remark \ref{formalexistrk} for a reformulation).
\end{enumerate}
\end{corollary}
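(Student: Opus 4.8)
The plan is to deduce Corollary \ref{lurierep2} from Theorem \ref{lurierep} by matching up the two lists of hypotheses, using the finite-presentation characterisation of Theorem \ref{cohofp} and the sheaf characterisation of Proposition \ref{sheafresult} to translate the conditions on $F$ into conditions on $\pi^0F$ and the tangent cohomology. So the first step is to observe that Theorem \ref{lurierep} asserts $F$ is a geometric derived $n$-stack almost of finite presentation iff the conditions of Proposition \ref{cotexists} hold together with the formal-existence (Schlessinger) condition at complete local Noetherian rings; the latter is literally condition (10) of the Corollary, so it suffices to show that the remaining conditions of the Corollary are equivalent to those of Proposition \ref{cotexists}.

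Next I would go through Proposition \ref{cotexists}'s list item by item. Conditions (1), (2), (3) of \ref{cotexists} (the $n$-truncation on discrete rings, homotopy-homogeneity, nilcompleteness) are identical to conditions (1), (2), (3) of the Corollary. Condition (6) of \ref{cotexists} (dualising module) and conditions (7), (8) of \ref{cotexists} (filtered-colimit preservation and finite generation of $\DD^i_x$ for $i>0$) appear verbatim as conditions (the dualising-module hypothesis on $R$, plus (8) and (9)) in the Corollary. The real work is in replacing \ref{cotexists}(4) ($F$ is an \'etale hypersheaf) and \ref{cotexists}(5) ($\pi^0F$ preserves filtered colimits) with the more checkable conditions (4)--(7) of the Corollary. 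For the hypersheaf property, I invoke Proposition \ref{sheafresult}: since $F$ is homotopy-homogeneous, nilcomplete and homotopy-preserving (and using that the $\DD^*$-modules are computed via Lemma \ref{tantrans}), $F$ is an \'etale hypersheaf provided $\pi^0F$ is one (Corollary condition (4)) and the tangent cohomology commutes with \'etale base change. Here I need to note that condition (7) of the Corollary only asserts the base-change isomorphism for finitely generated integral domains $A$, whereas Proposition \ref{sheafresult} wants it for all discrete $A$; I would bridge this gap by the same filtered-colimit/localisation argument used at the end of the proof of Theorem \ref{cohofp} — writing a general discrete algebra and module as a filtered colimit and reducing to the finitely generated domain case, using that (by conditions (5),(6),(8) and Lemma \ref{adf}) the functors $\DD^i_x(F,-)$ commute with filtered colimits in all degrees and that a point over a finitely generated algebra factors through a finitely generated domain after passing to an irreducible component (as in the localisation step of \ref{cohofp}).

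For the filtered-colimit condition \ref{cotexists}(5), I would identify it with the combination of Corollary conditions (5) and (6): by definition $\pi^0F$ preserves filtered colimits iff the space $\pi^0F(A)=F(A)$ is the filtered homotopy colimit of the $F(A_\alpha)$ for discrete $A=\varinjlim A_\alpha$, which on homotopy groups is exactly the statement that $\pi_0\pi^0F$ preserves filtered colimits (condition (5)) and that each $\pi_i(\pi^0F,x)$ does for $i>0$ (condition (6)); one should remark that the pointed statement for $\pi_i$ suffices because a point over the colimit lifts to some stage, which is the standard manipulation with filtered colimits of pointed sets. Finally, once Proposition \ref{cotexists} is established under the Corollary's hypotheses, Theorem \ref{cohofp} shows $F$ is almost of finite presentation (its two hypotheses being Corollary conditions (5),(6) via the $\pi_0$/$\pi_i$ reformulation, together with (8)), and then Theorem \ref{lurierep} delivers the conclusion; conversely, if $F$ is a geometric derived $n$-stack almost of finite presentation, Theorem \ref{lurierep} gives back all the conditions of \ref{cotexists} plus the formal-existence condition, and Theorem \ref{cohofp}, Proposition \ref{sheafresult} and the remark on \'etale base change of the cotangent complex (together with the standard fact that $\pi_iF(A)$ is finitely generated over a finitely generated domain) recover conditions (5)--(9) of the Corollary. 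The main obstacle I anticipate is precisely the quantifier mismatch in the \'etale base-change condition (7) — reducing the ``all discrete $A$'' statement needed by Proposition \ref{sheafresult} to the ``finitely generated integral domain'' statement of the Corollary — which requires carefully re-running the dévissage from the proof of Theorem \ref{cohofp}.
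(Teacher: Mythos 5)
Your proposal follows essentially the same route as the paper: both directions go through Theorem \ref{lurierep} (and hence Proposition \ref{cotexists}), with sufficiency reducing to showing $F$ is a hypersheaf via Proposition \ref{sheafresult} after upgrading condition (7) from finitely generated domains to general $A$-modules by combining almost finite presentation, exactness of the tangent functors, and Lemma \ref{tantrans}, and with necessity coming from quasi-coherence of $\ext^i_{\O_Y}(x^*\bL^{F/R},\O_Y)$ plus the observation that (5)+(6) is exactly ``$\pi^0F$ preserves filtered colimits.'' The paper states the upgrade of (7) more tersely (attributing it to ``almost finite presentation and exactness of the tangent complex'') while you spell out the dévissage explicitly, but this is a difference of exposition, not of method.
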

\begin{proof}
If $F$ is a derived $n$-stack of almost finite presentation, then the \'etale sheaf $A' \mapsto \DD_{fx}^i(F, A')$ on $Y:=\Spec A$ is just
\[
\ext^i_{\O_Y}(x^*\bL^{F/R}, \O_Y), 
\]
which is necessarily quasi-coherent, as $x^*\bL^{F/R} $ is equivalent to a complex of finitely generated locally free sheaves (for instance by the results of \cite{stacks2} \S \ref{stacks-qucohsn}). Combined with Theorem \ref{lurierep}, this ensures that all the conditions are necessary, once we note that  conditions \ref{colim1} and \ref{colim2} are equivalent to $\pi^0F: \Alg_{\pi_0R} \to \bS$  preserving filtered colimits.

For the converse, we just need to show that $F$ is a hypersheaf in order to ensure that it satisfies the conditions of Theorem \ref{lurierep}. This follows almost immediately from Proposition \ref{sheafresult}, first noting that condition (\ref{shf2}) above combines with almost finite presentation and exactness of the tangent complex to ensure that for all
$A \in \Alg_{\pi_0R}$, all $x \in F(A)_0$, all $A$-modules $M$ and all \'etale morphisms $f:A \to A'$, the maps
\[
\DD_x^*(F, M)\ten_AA' \to \DD_{fx}^*(F, M\ten_AA')
\]
are isomorphisms.
 \end{proof}

\begin{remark}
Although Corollary \ref{lurierep2} seems more complicated than Theorem \ref{lurierep}, since it has an extra condition, it is much easier to verify in practice. This is because $F(A)$ is only $n$-truncated when $A$ is discrete, so it is much easier to check that $\pi^0F$ is a hypersheaf than to do the same for $F$. 
\end{remark}

\begin{proposition}\label{detectweak}
Take a morphism $\alpha: F \to G$ of  almost  finitely presented geometric derived $n$-stacks a over $R$. Then $\alpha$ is a weak equivalence if and only if 
\begin{enumerate}
        \item $\pi^0\alpha: \pi^0F \to \pi^0G$ is a weak equivalence of functors $\Alg_{\pi_0R}\to \bS$, and
\item the maps $\DD^i_x(F,A) \to \DD^i_{\alpha x}(G,A)$ are isomorphisms for all  finitely generated integral domains $A \in \Alg_{\pi_0R}$, all $x \in F(A)_0$, and all $i>0$.
\end{enumerate}
\end{proposition}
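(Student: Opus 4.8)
The ``only if'' direction is immediate. For the converse, assume (1) and (2); we show that $\alpha\colon F(A)\to G(A)$ is a weak equivalence for every $A\in s\Alg_R$. As almost finitely presented derived geometric stacks, $F$ and $G$ are homotopy-preserving, homotopy-homogeneous (by the argument for necessity of condition (\ref{cohesive}) in the proof of Theorem \ref{lurierep}) and nilcomplete, so $F(A)\simeq \Lim^h_k F(P_kA)$ and likewise for $G$; it therefore suffices to treat $k$-truncated $A$, by induction on $k$. The case $k=0$ is hypothesis (1). For $k\ge 1$ put $B:=P_{k-1}A$ and $M:=\pi_kA$; as in the proof of Proposition \ref{sheafresult} there is a homotopy derivation $\delta\colon B\to M[-k-1]$ with $A\simeq B\by^h_{\id+\delta,\,\pi_0A\oplus M[-k-1]}\pi_0A$, so homotopy-homogeneity gives $F(A)\simeq F(B)\by^h_{F(\pi_0A\oplus M[-k-1])}F(\pi_0A)$ and likewise for $G$, compatibly with $\alpha$. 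By induction ($B$ is $(k-1)$-truncated) and (1) ($\pi_0A$ is discrete), $\alpha$ is a weak equivalence on $F(B)$ and on $F(\pi_0A)$, so it remains only to show $F(\pi_0A\oplus M[-k-1])\to G(\pi_0A\oplus M[-k-1])$ is a weak equivalence.

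Write $A':=\pi_0A$ (a discrete ring) and $n:=k+1\ge 2$. The map $F(A'\oplus M[-n])\to F(A')$ is a homotopy fibration with fibre $T_x(F/R)(M[-n])$ over $x$, and $F(A')\xra{\sim}G(A')$ by (1); pulling back, it is enough to prove that $T_x(F/R)(M[-n])\to T_{\alpha x}(G/R)(M[-n])$ is a weak equivalence for every $x\in F(A')_0$. By (the argument of) Lemma \ref{adf}, $T_x(F/R)(M[-n])\simeq 0\by^h_{T_x(F/R)(M[-n-1])}0$ is a loop space, so its path components are all homotopy equivalent and its homotopy groups, at any basepoint, are $\pi_m=\DD^{-m}_x(F,M[-n])=\DD^{n-m}_x(F,M)$ (using the shift $\DD^j_x(F,M[-n])=\DD^{j+n}_x(F,M)$ of Lemma \ref{adf}); since the comparison map is an $H$-map, intertwining the $H$-space structures coming from addition in the module, it is a weak equivalence as soon as the isomorphisms $\DD^i_x(F,M)\to\DD^i_{\alpha x}(G,M)$ hold for all $i$. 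Thus the whole induction reduces to the claim: \emph{for every discrete ring $A'$, every $A'$-module $M$, every $x\in F(A')_0$ and every $i\in\Z$, the map $\DD^i_x(F,M)\to\DD^i_{\alpha x}(G,M)$ is an isomorphism.}

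For $i\le 0$ this is clear, as $\DD^i_x(F,M)=\pi_{-i}\bigl(F(A'\oplus M)\by^h_{F(A')}\{x\}\bigr)$ depends only on $\pi^0F$, and $\pi^0\alpha$ is a weak equivalence. For $i>0$ we descend to hypothesis (2) in three steps. First, since $F$ (hence $\pi^0F$) commutes with filtered colimits, $x=h_*x_C$ for some morphism $h\colon C\to A'$ with $C$ a finitely generated $\pi_0R$-algebra and $x_C\in F(C)_0$, and Lemma \ref{tantrans} identifies the map with $\DD^i_{x_C}(F,h_*M)\to\DD^i_{\alpha x_C}(G,h_*M)$; so we may take the base ring to be the Noetherian ring $C$ (at the price of $h_*M$ no longer being finitely generated, which is irrelevant below). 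Second, over $C$ the functors $\DD^i_{x_C}(F,-)$ commute with filtered colimits --- for $i>0$ by the almost-finite-presentation hypothesis via Theorem \ref{cohofp}, and for $i\le 0$ because $\pi^0F$ does --- so, writing $h_*M$ as a filtered colimit of finitely generated $C$-modules and then filtering each such module by submodules with successive quotients $C/\p$ and using the long exact sequences in $\DD^*$ attached to short exact sequences of modules (obtained from homotopy-homogeneity applied to the trivial square-zero extensions $C\oplus M\onto C\oplus M''$), we reduce to the coefficient modules $C/\p$. Third, Lemma \ref{tantrans} for the surjection $C\onto C/\p$ identifies $\DD^i_{x_C}(F,C/\p)$ with $\DD^i_{(x_C)_\p}(F,C/\p)$, where $C/\p$ is now viewed as a module over itself; since $C/\p$ is a finitely generated integral domain over $\pi_0R$, hypothesis (2) applies and finishes the case $i>0$. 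This proves the claim, and with it the proposition.

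The main obstacle is precisely this three-step reduction. Hypothesis (2) is deliberately thin --- it controls only $\DD^i$ with $i>0$, only with coefficients in the ring itself, and only over integral domains --- whereas the Postnikov induction needs all $\DD^i$ with coefficients in an arbitrary module over an arbitrary discrete ring. Closing this gap forces the combined use of Lemma \ref{tantrans} (to push the base ring down, first to a finitely generated ring and then to a residue domain), d\'evissage through the long exact sequences produced by homotopy-homogeneity, and the filtered-colimit continuity supplied by almost finite presentation; the low-degree range $i\le 0$ is then absorbed entirely by the underived hypothesis (1).
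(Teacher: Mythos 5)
Your proposal is correct, but it takes a genuinely different route from the paper. The paper reduces immediately to showing that the relative cotangent complex $\bL_{\bt}^{F/G}$ vanishes: it invokes \cite{hag2} Corollary 2.2.5.6 to conclude $\alpha$ is \'etale once $\bL_{\bt}^{F/G}\simeq 0$, and then an \'etale map inducing an equivalence on $\pi^0$ is an equivalence. The vanishing itself is proved by observing that $x^*\bL_{\bt}^{F/G}$ is an almost perfect complex over a Noetherian finitely generated domain whose $\Ext$-groups against the ring vanish, and then leveraging almost perfection to propagate vanishing to all coefficient modules. You avoid the cotangent-complex black boxes entirely and instead run the Postnikov-tower induction directly: nilcompleteness plus homotopy-homogeneity decompose $F(A)$ and $G(A)$ into pieces governed by $\pi^0$ and by the tangent spaces $T_x(-/R)(M[-n])$, and the loop-space structure (Lemma \ref{adf}) converts the tangent comparison into isomorphisms on $\DD^i$-groups. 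You then close the gap between the thin hypothesis (2) (only $i>0$, only coefficients $A$, only finitely generated domains) and the needed statement (all $i$, all modules, all discrete rings) by the same d\'evissage the paper is implicitly running inside the almost-perfection argument: Lemma \ref{tantrans} to push the base down, filtered-colimit continuity from almost finite presentation via Theorem \ref{cohofp}, prime filtration over a Noetherian ring, and the long exact sequences in $\DD^*$ supplied by homotopy-homogeneity. Your argument is longer but more self-contained and makes the reduction explicit; the paper's is terser because it outsources the heavy lifting to existing cotangent-complex results (\'etaleness criterion, almost perfection). Both are sound.
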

\begin{proof}
 It suffices to show that $\bL_{\bt}^{F/G} \simeq 0$. For if this is the case, then \cite{hag2} Corollary 2.2.5.6 implies that $\alpha$ is \'etale. By applying \cite{hag2} Theorem 2.2.2.6 locally, it follows that an \'etale morphism $\alpha$ must be a  weak equivalence whenever $\pi^0\alpha$ is so.

Now, $  \bL_{\bt}^{F/G}$ is the cone of $\alpha^*\bL^{G/R}\to \bL^{F/R}$, so we wish  to show that this map is an equivalence locally. This is equivalent to saying that for all integral domains $A \in \pi_0R$, all $\pi_0R$-modules $M$, all $x \in F(A)$ and all $i$, the maps
\[
 \DD^i_x(F,M) \to \DD^i_{\alpha x}(G,M)       
\]
  are isomorphisms.

For $i\le 0$, these isomorphisms follow immediately from the hypothesis that $\pi^0\alpha$ be an equivalence. For $i>0$, we first note that finite presentation of $\pi^0F$ means that we may assume that $A$  is finitely generated. We then have an almost perfect complex $x^*\bL_{\bt}^{F/G}$ with the property that 
\[
 \Ext_{A}^i(  x^*\bL_{\bt}^{F/G},A)=0     
\]
for all $i$, so $\Ext_{A}^i(  x^*\bL_{\bt}^{F/G},P)=0$ for all almost perfect $A$-complexes $P$ (using nilcompleteness of $F$ and $G$). In particular, 
\[
  \DD^i_x(F/G, M)= \Ext_{A}^i(  x^*\bL_{\bt}^{F/G},M)=0       
\]
for all finite $A$-modules. Almost finite presentation of $F$ and $G$ now gives that $\DD^i_x(F/G, M)=0$ for all $A$-modules $M$, completing the proof.
\end{proof}

\subsection{Strong quasi-compactness}

\begin{lemma}\label{prodfields}
If $S$ is a set of separably closed fields, and $X=\Spec (\prod_{k \in S}k)$, then every surjective \'etale morphism  $f:Y \to X$ of affine schemes has a section.
\end{lemma}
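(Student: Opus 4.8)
The plan is to translate the statement into commutative algebra and construct the section one coordinate at a time. Write $X=\Spec A$ with $A=\prod_{k\in S}k$, and $Y=\Spec B$, so that $f$ corresponds to an étale homomorphism $\iota\colon A\to B$. A section of $f$ is the same thing as an $A$-algebra homomorphism $B\to A$; and since $A=\prod_{k\in S}k$, giving a ring homomorphism $B\to \prod_{k\in S}k$ over $A$ amounts to giving, for each $k\in S$, a ring homomorphism $s_k\colon B\to k$ with $s_k\circ\iota$ equal to the $k$th projection $p_k\colon A\to k$. So it suffices to produce each $s_k$ separately.

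Fix $k\in S$, and let $\m_k=\ker p_k\subset A$, a maximal ideal with residue field $A/\m_k=k$. Base-changing $f$ along $\Spec k\to X$ gives the fibre $\Spec(B\ten_A k)\to \Spec k$, which is again étale; since it is affine (hence quasi-compact), $B\ten_A k$ is a finite product of finite separable field extensions of $k$. As $k$ is separably closed, every such extension equals $k$, so $B\ten_A k\cong k^{n_k}$ for some finite $n_k\ge 0$. Because $f$ is surjective, $\m_k$ lies in the image of $f$, so this fibre is nonempty and $n_k\ge 1$. Projecting onto a factor yields a $k$-algebra map $B\ten_A k\to k$, and precomposing with $B\to B\ten_A k$ gives the desired $s_k\colon B\to k$ with $s_k\circ\iota=p_k$.

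Finally, assemble: $s:=(s_k)_{k\in S}\colon B\to \prod_{k\in S}k=A$ is a ring homomorphism, and for $a=(a_k)_k\in A$ one has $s_k(\iota(a))=p_k(a)=a_k$, so $s\circ\iota=\id_A$; thus $\Spec s\colon X\to Y$ is a section of $f$. I do not expect a genuine obstacle here: the only real inputs are that an étale algebra over a separably closed field is a split product of copies of that field, together with surjectivity of $f$ to guarantee each fibre is nonempty, and the tautology that $A$ is literally the product of its coordinate residue fields $A/\m_k=k$. (Note that $X$ has many further points — corresponding to non-principal ultrafilters on $S$ — whose residue fields need not be separably closed, but these are irrelevant: a homomorphism out of $\prod_{k\in S}k$ is determined by its coordinates, so only the points $\m_k$ enter the construction.)
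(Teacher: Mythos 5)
Your argument is correct and is essentially the same as the paper's: both exploit surjectivity of $f$ to get nonempty fibres over each coordinate maximal ideal $\m_k$, use that an étale algebra over the separably closed field $k$ splits as a finite product of copies of $k$ to produce a homomorphism $s_k\colon B\to k$ over the projection $p_k$, and then assemble $(s_k)_k$ into a map $B\to\prod_{k\in S}k=A$ using that $Y$ is affine. The paper phrases the assembly step geometrically (a map $\coprod_{k\in S}\Spec k\to Y$ into an affine scheme is the same as a map $\Spec A\to Y$), but this is the same observation as your coordinate-by-coordinate construction; your version also spells out why the other points of $\Spec A$ coming from non-principal ultrafilters do not matter, which the paper leaves implicit.
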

\begin{proof}
Since 
$f$ is surjective, the canonical maps $\Spec k \to X$ admit lifts to $Y$, for all $k \in S$, combining to give  a map $\coprod_{k \in S} \Spec k \to Y$. Since $Y$ is affine, this is equivalent to giving a map $X \to Y$, and this is automatically a section of $f$.
\end{proof}

\begin{proposition}\label{sqc}
A morphism $F \to G$  of geometric $m$-stacks is strongly quasi-compact if and only if for all sets $S$  of separably closed fields, the map
$$
F(\prod_{k \in S} k) \to (\prod_{k \in S} F(k))\by_{(\prod_{k \in S} G(k))}^hG(\prod_{k \in S} k)
$$
is a weak equivalence in $\bS$.
\end{proposition}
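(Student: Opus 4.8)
The plan is to prove that the displayed condition — call it $(\star)$ — is equivalent to strong quasi-compactness by induction on the geometric level $m$ of $f$, with the role of $\Spec(\prod_{k}k)$ being that (smooth) covers of it split. First I would upgrade Lemma~\ref{prodfields} to: every smooth surjection onto $X=\Spec(\prod_{k\in S}k)$ (or onto $\Spec$ of a single separably closed field) admits a section — smoothness gives an étale-local section, the étale cover can be refined to an affine one since $X$ is quasi-compact, and then Lemma~\ref{prodfields} applies. A consequence I will use repeatedly: if $U\to Y$ is an atlas of a geometric stack and $X$ is such a product of separably closed fields, then $U(X)\to Y(X)$ is $\pi_0$-surjective, so $Y(X)\simeq\lvert U^{\times_Y\bullet}(X)\rvert$, the realization of the Čech nerve of the atlas. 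I also record the purely formal facts that the class of morphisms satisfying $(\star)$ is closed under base change, composition, and two-out-of-three ($gf,g\in(\star)\Rightarrow f\in(\star)$) — each follows by rearranging the defining homotopy fibre product — and that every affine morphism satisfies $(\star)$, since $\Spec R$ sends products of rings to products of sets; this is the base of the induction.

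For $(\star)\Rightarrow$ strongly quasi-compact I argue by induction on $m$. To see $f$ is quasi-compact, fix an affine $\Spec A\to G$, set $F':=F\times_G\Spec A$, and suppose an atlas $\coprod_{i\in I}\Spec B_i\to F'$ has no finite subatlas; then for each finite $I_0\subseteq I$ pick a separably-closed-field point of $F'$ not in the image of $\coprod_{i\in I_0}\Spec B_i$, and let $S$ be the (set-sized) collection of these points. Using $(\star)$ for $F'\to\Spec A$ (a base change of $f$) and affineness of $\Spec A$, the tuple $(x_k)_{k\in S}$ lifts to a single $\xi\colon X:=\Spec(\prod_{k\in S}k)\to F'$. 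Pulling back the atlas along $\xi$ gives a smooth surjection onto the quasi-compact affine scheme $X$, so finitely many of the $\Spec B_i\times_{F'}X$ already cover $X$; in particular each coordinate point $\Spec k\hookrightarrow X$, hence each $x_k$, lifts into this finite subfamily — contradicting the choice of $S$. Thus $f$ is quasi-compact. Finally, a short manipulation of homotopy fibre products shows $(\star)$ for $f$ implies $(\star)$ for the diagonal $\Delta_f$; since $\Delta_f$ is $(m-1)$-geometric, the inductive hypothesis makes it strongly quasi-compact, and therefore so is $f$.

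For the converse, since affine morphisms satisfy $(\star)$ and $(\star)$ may be tested on the homotopy fibre over each point of $G(\prod k)$, one reduces to $G=\Spec A$ affine, where $(\star)$ reads $F(\prod_j k_j)\simeq\prod_j F(k_j)$. Induct on $m$: take a finite atlas $U\to F$ with $U$ a finite coproduct of affine schemes over $\Spec A$. By the first paragraph $F(\prod k)\simeq\lvert U^{\times_F\bullet}(\prod k)\rvert$ and $F(k_j)\simeq\lvert U^{\times_F\bullet}(k_j)\rvert$. For $n\ge 1$ each $U^{\times_F(n+1)}$ is $(m-1)$-geometric and strongly quasi-compact over $\Spec A$, so by induction $U^{\times_F(n+1)}(\prod k)\simeq\prod_j U^{\times_F(n+1)}(k_j)$, while for $n=0$ this holds directly since $U$ is a finite coproduct of affines. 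Hence $F(\prod k)\simeq\bigl\lvert\,[n]\mapsto\prod_j U^{\times_F(n+1)}(k_j)\,\bigr\rvert$, and it remains to interchange this realization with the product over $j$. Each $Z^j_\bullet:=U^{\times_F\bullet}(k_j)$ is a groupoid object in spaces with \emph{discrete} object space $U(k_j)$ and $m$-truncated realization $F(k_j)$; for such objects the canonical map $\lvert\prod_j Z^j_\bullet\rvert\to\prod_j\lvert Z^j_\bullet\rvert$ is an isomorphism on $\pi_0$ (a quotient by an equivalence relation, which commutes with products) and on $\pi_i$ for $i\ge 1$ (via based loop spaces $\Omega_x\lvert Z_\bullet\rvert\simeq Z_1\times_{Z_0\times Z_0}\{(x,x)\}$, which commute with products), so both being bounded it is an equivalence. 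This gives $F(\prod_j k_j)\simeq\prod_j F(k_j)$ and closes the induction.

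The step I expect to be the main obstacle is the last one: commuting the geometric realization past the infinite product. This is false for general simplicial spaces, and the argument genuinely uses that the atlas can be taken to be a \emph{finite} coproduct of affine schemes — so that the relevant simplicial spaces are groupoid objects with discrete object space and uniformly bounded truncation — which is precisely where strong quasi-compactness is used. Secondary care is needed to keep the set-theoretic bookkeeping honest in the reverse direction (that the collection of omitted points is a set), and to check that strong quasi-compactness and the geometric level behave as claimed under passing to the Čech nerve of an atlas and to the diagonal.
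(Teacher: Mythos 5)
Your argument is correct in substance and, for one of the two implications, takes a genuinely different route from the paper. For the implication ``condition $\Rightarrow$ strongly quasi-compact'' you and the paper do essentially the same thing: reduce to an affine base, use Lemma~\ref{prodfields} (upgraded to smooth covers) to split a cover of $\Spec(\prod_{k\in S} k)$, extract a quasi-compact atlas, and handle the diagonal by induction on the geometric level (the paper phrases the atlas step as a direct construction using the set of all geometric points of $V$, whereas you phrase it as a contradiction argument, but the mechanism is identical). For the converse ``strongly quasi-compact $\Rightarrow$ condition'' the paper takes a shortcut you do not have available: it invokes the strict-model theorem from the companion paper \cite{stacks} to present $F\by^h_{G,g}Z$ as the sheafification of a genuine simplicial affine scheme $X$; then $X(\prod k)\cong\prod X(k)$ is automatic because affines send products of rings to products of sets, and Lemma~\ref{prodfields} (showing $Z$ is weakly initial among \'etale hypercovers of itself) makes the sheafification invisible when evaluating at $Z$. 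Your replacement argument — resolve $F$ by the \v Cech nerve of a finite atlas, apply the inductive hypothesis levelwise, then interchange realization with the infinite product — is self-contained and avoids the strict-model machinery entirely. The interchange step you flag is indeed the crux, and your treatment of it is sound: $\prod_j U^{\by_F\bullet}(k_j)$ is a groupoid object (the Segal and groupoidal conditions are limit conditions), so $\pi_0$ of the realization is a quotient by an honest equivalence relation, and the based loop space is $Z_1\by_{Z_0\by Z_0}\{(x,x)\}$ by effectivity; both commute with products, and bounded truncation then forces the comparison to be an equivalence. One small point of hygiene: ``affine morphisms satisfy $(\star)$'' should really be ``morphisms between affines satisfy $(\star)$'', but you only ever apply it in that form. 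Your argument trades the external black box for an interchange lemma that has to be proved; the paper's is shorter given its references, while yours is more elementary and portable.
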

\begin{proof}
Let $Z= \Spec (\prod_{k \in S} k)$, and fix an element $g \in G(Z)$ . If  $F \to G$ is strongly quasi-compact, then $F\by_{G,g}^hZ$ is strongly quasi-compact, so by \cite{stacks2} Theorem \ref{stacks-relstrict}, there exists a simplicial affine scheme $X$ whose sheafification $X^{\sharp}$ is $F\by^h_GZ$. Now, Lemma \ref{prodfields} implies that $Z$ is weakly initial in the category of \'etale hypercovers of $Z$, so (for instance by \cite{stacks2} Corollary \ref{stacks-duskinmor}) $X^{\sharp}(Z) \simeq X(Z)$. Now, since $X$ is simplicial affine, it preserves arbitrary limits of rings, so
$$
X(Z)\cong \prod_{k \in S}X(k) \cong \prod_{k \in S}X^{\sharp}(Z), 
$$   
which proves that the condition is necessary.

To prove that the condition is sufficient, we need to show that for any affine scheme $U$ and any morphism $U \to G$, the homotopy fibre product $F\by^h_GU$ is  strongly quasi-compact. Since  $U$ is affine, it satisfies the condition, so $F\by^h_GU$ will also, and so we may assume that  $G =U$ or even $ \Spec \Z$.

Now, it follows (for instance from the proof of \cite{stacks2} Theorem \ref{stacks-relstrict}) that if an $n$-geometric stack $F$ admits an $n$-atlas $U\to F$, with $U$ quasi-compact, and the diagonal $F \to F\by F$ is strongly quasi-compact, then $F$ is strongly quasi-compact.

We will proceed by induction on $n$ (noting that we use $n$-geometric, as in Remark \ref{cflurie}, rather than $n$-truncated). A $0$-geometric stack $F$ is a disjoint union of affine schemes, so is separated, and in particular its diagonal is strongly quasi-compact. 

Assume that an $n$-geometric stack $F$ has strongly quasi-compact diagonal and satisfies the condition above, and take an $n$-atlas $V \to F$ for $V$ $0$-geometric (where we interpret a $0$-atlas as an isomorphism).   Let $S$ be a set of representatives of equivalence  classes of geometric points of $V$, and set $Z=\Spec (\prod_{k \in S} k)$. Since $F$ satisfies the condition above, 
$$
F(Z) \cong \prod_{k \in S} F(k),
$$ 
so the points $\Spec k \to V\to F$ combine to define a map $f:Z \to F$. 

As $V \to F$ is an atlas, for some \'etale cover $Z'\to Z$, $f$ lifts to a map $\tilde{f}: Z' \to V$. But  Lemma \ref{prodfields} implies that $Z' \to Z$ has a section, so we have a lifting $\tilde{f}: Z \to V$ of $f$. Now, $V= \coprod_{\alpha \in I} V_{\alpha}$ is a disjoint union of affine schemes, and since $Z$ is  quasi-compact, there is some  finite subset $J \subset I$ with $U:=\coprod_{\alpha \in J} V_{\alpha}$ containing the image of $Z$. But $U$ is then quasi-compact, and $U \to F$ is surjective, hence an $n$-atlas, which completes the induction.
\end{proof}

\begin{corollary}
A morphism $F \to G$  of geometric derived  stacks is strongly quasi-compact if and only if for all sets $S$  of separably closed fields, the map
$$
F(\prod_{k \in S} k) \to (\prod_{k \in S} F(k))\by_{(\prod_{k \in S} G(k))}^hG(\prod_{k \in S} k)
$$
is a weak equivalence in $\bS$.
\end{corollary}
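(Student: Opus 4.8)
The plan is to reduce to the non-derived statement of Proposition \ref{sqc} by passing to classical truncations. The first, bookkeeping, step is this: the ring $\prod_{k\in S}k$ and the individual fields $k$ are all discrete, and by definition $F(A)=\pi^0F(A)$, $G(A)=\pi^0G(A)$ for discrete $A$; hence the map appearing in the statement is exactly the map that Proposition \ref{sqc} attaches to the morphism $\pi^0F\to\pi^0G$ of classical geometric stacks. Here $\pi^0F$ and $\pi^0G$ are indeed geometric classical stacks because $F$ and $G$ are geometric derived stacks: a smooth atlas $\coprod_i\Spec A_i\to F$ restricts to the smooth surjection $\coprod_i\Spec\pi_0A_i\to\pi^0F$, and this propagates down the successive diagonals. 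So it suffices to prove that $F\to G$ is strongly quasi-compact if and only if $\pi^0F\to\pi^0G$ is, and then invoke Proposition \ref{sqc}.

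For that equivalence I would use two compatibilities of truncation. First, a geometric derived stack $\fX$ is strongly quasi-compact if and only if $\pi^0\fX$ is: a quasi-compact atlas $U=\coprod_{i\in I}\Spec A_i\to\fX$ with $I$ finite truncates to the quasi-compact atlas $\coprod_{i\in I}\Spec\pi_0A_i\to\pi^0\fX$, and conversely, and the same runs along the iterated diagonals entering the definition of strong quasi-compactness. Second, for a homotopy fibre product over a derived affine, the classical truncation $\pi^0(F\by^h_G\Spec B)$ coincides with the homotopy fibre product $\pi^0F\by^h_{\pi^0G}\Spec\pi_0B$ formed among classical geometric stacks, since both are the classical stacks determined by their (coinciding) values on discrete rings. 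Combining these: $F\to G$ is strongly quasi-compact iff every $F\by^h_G\Spec B$ (over $\Spec B\to G$ a derived affine) is strongly quasi-compact, iff every $\pi^0F\by^h_{\pi^0G}\Spec\pi_0B$ is strongly quasi-compact, iff every $\pi^0F\by^h_{\pi^0G}\Spec A$ (over $\Spec A\to\pi^0G$ a classical affine) is strongly quasi-compact, iff $\pi^0F\to\pi^0G$ is strongly quasi-compact. Proposition \ref{sqc} applied to $\pi^0F\to\pi^0G$ then finishes the proof.

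I expect the main obstacle to be the first of these facts, namely that strong quasi-compactness of a geometric derived stack is insensitive to classical truncation: this needs that truncation preserves geometricity, smoothness and surjectivity of atlases, and quasi-compactness of disjoint unions of affines, and that it behaves correctly along the successive diagonals. These are standard compatibilities (contained in \cite{hag2} and in \cite{stacks}), but they should be assembled with some care; everything else is formal once this is in place.
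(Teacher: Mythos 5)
Your proof is correct and follows essentially the same route as the paper's: the paper also reduces to Proposition \ref{sqc} by observing that $F\to G$ is strongly quasi-compact if and only if $\pi^0F\to\pi^0G$ is, and that the criterion in the statement only involves discrete rings and hence coincides with the criterion for $\pi^0F\to\pi^0G$. You have simply spelled out the justification of the truncation equivalence, which the paper leaves as a one-line assertion.
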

\begin{proof}
The morphism $F \to G$ is strongly quasi-compact if and only if $\pi^0F \to \pi^0G$ is a strongly quasi-compact morphism of geometric  stacks, so we apply Proposition \ref{sqc}.
\end{proof}

\section{Complete simplicial and chain algebras}\label{completesn}

\begin{proposition}\label{adjointmodel}
Take a cofibrantly generated model category $\C$. Assume that $\cD$ is a complete and cocomplete category, equipped with  an adjunction
\[
\xymatrix@1{\cD \ar@<1ex>[r]^U_{\top} & \C \ar@<1ex>[l]^F},
\]
with $U$ preserving filtered colimits. If $UF$ maps generating trivial cofibrations to weak equivalences, 
then $\cD$ has a cofibrantly generated model structure in with a morphism $f$ is a fibration or a weak equivalence whenever $Uf$ is so. 

This adjunction is a pair of Quillen equivalences if and only if  the unit morphism $A \to UFA$ is a weak equivalence for all cofibrant objects $A \in \C$.
\end{proposition}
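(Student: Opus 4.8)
The first assertion is the standard transfer of a cofibrantly generated model structure along a right adjoint, and the plan is to run the usual argument while checking that our hypotheses supply precisely its inputs. Fix sets $I$ and $J$ of generating cofibrations and generating trivial cofibrations for $\C$, and take $FI=\{Fi\s i\in I\}$, $FJ=\{Fj\s j\in J\}$ as the candidate generating (trivial) cofibrations of $\cD$; declare $f$ in $\cD$ to be a weak equivalence, respectively a fibration, exactly when $Uf$ is one in $\C$, and a cofibration when it lifts on the left against all morphisms that are simultaneously weak equivalences and fibrations. Closure under retracts and two-out-of-three for weak equivalences are inherited from $\C$ because $U$ is a functor, and $\cD$ is bicomplete by assumption. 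The adjunction isomorphism $\Hom_{\cD}(Fc,-)\cong\Hom_{\C}(c,U(-))$ does the remaining bookkeeping: it shows that $f$ has the right lifting property against $FI$ (resp.\ $FJ$) if and only if $Uf$ has it against $I$ (resp.\ $J$), i.e.\ if and only if $Uf$ is a trivial fibration (resp.\ fibration); and, since $U$ preserves filtered colimits, it shows that $Fc$ is small in $\cD$ whenever $c$ is small in $\C$, so that $FI$ and $FJ$ permit Quillen's small object argument (see \cite{sht}). Granting the acyclicity condition discussed next, the two functorial factorizations produced by the small object argument, together with these lifting identifications and the retract argument, yield the factorization and lifting axioms, hence the model structure; in it $U$ manifestly preserves fibrations and trivial fibrations, so $(F,U)$ is automatically a Quillen adjunction.

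The substantive point — and the step I expect to be the main obstacle — is the acyclicity condition: every relative $FJ$-cell complex must be carried by $U$ to a weak equivalence of $\C$. This is exactly where the hypothesis that $UF$ sends $J$ into the weak equivalences is meant to enter. A relative $FJ$-cell complex is a transfinite composite of pushouts of coproducts of maps from $FJ$; since $U$ preserves filtered colimits it commutes with the transfinite composite and with the coproducts indexing cells, so the problem localises to understanding $U$ applied to a single cell attachment, a pushout of some $Fj$ ($j\in J$) along an arbitrary map $Fc\to D$. The delicate feature is that $U$ need not preserve this pushout, so one cannot merely transport ``$UFj$ is a weak equivalence'' along it; instead one must exploit whatever compatibility of $U$ with cell attachments is available in the situation at hand, and then propagate weak equivalences up the cellular filtration by two-out-of-three. (In the presence of functorial path objects on $\cD$ one could instead use Quillen's path-object form of the transfer argument; the hypothesis as stated is the cell-complex form of the acyclicity input.)

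For the second assertion, the key observation is that in the transferred structure $U$ both preserves and reflects all weak equivalences, by the very definition of the weak equivalences of $\cD$. Hence $\oR U$ is computed by $U$ itself, with no fibrant replacement needed, and it is conservative on $\Ho\cD$. Since $(F,U)$ is a Quillen adjunction we obtain an adjunction $\oL F\dashv\oR U$ on homotopy categories, and by the standard fact that a Quillen adjunction is a Quillen equivalence precisely when this derived adjunction is an equivalence of categories, it suffices to decide when $\oL F\dashv\oR U$ is an adjoint equivalence. As $\oR U$ is conservative and the triangle identities hold, $\oL F\dashv\oR U$ is an equivalence if and only if its unit is an isomorphism (the counit being then forced). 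For a cofibrant $A\in\C$ one has $\oL F(A)=FA$ and $\oR U(FA)=UFA$, so the derived unit at $A$ is represented by the ordinary unit $\eta_A\colon A\to UFA$; thus the derived adjunction is an equivalence exactly when $\eta_A$ is a weak equivalence for every cofibrant $A$. This yields both directions of the stated criterion simultaneously, and the only remaining work is to assemble these observations carefully.
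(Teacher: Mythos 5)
You take the same overall route the paper does: invoke the standard transfer theorem (Hirschhorn 11.3.2, or the equivalent small-object argument you sketch) for the existence of the model structure, and then observe that since $U$ creates weak equivalences it is conservative and computes $\oR U$, so Hovey's unit criterion for Quillen equivalences reduces to the ordinary unit $\eta_A\colon A \to UFA$ being a weak equivalence for cofibrant $A$. Your treatment of the Quillen equivalence part is exactly the paper's argument, and is correct.

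The interesting point is the one you flag yourself: the acyclicity condition. You correctly observe that the hypothesis of the proposition — that $UF$ sends the generating trivial cofibrations $J$ of $\C$ to weak equivalences — is \emph{prima facie} weaker than what Hirschhorn's Theorem 11.3.2 actually requires, namely that $U$ take every relative $FJ$-cell complex to a weak equivalence, and that the obstruction is precisely that $U$, being a right adjoint, need not preserve the pushouts occurring in a cell attachment. The paper's proof does not engage with this at all: it simply says ``we may apply \cite{Hirschhorn} Theorem 11.3.2'' once smallness has been secured, leaving the acyclicity hypothesis unverified from the stated assumptions. So your proposal is in fact more scrupulous than the paper's proof on this point, but — as you acknowledge — it does not close the gap either; it only localises the difficulty and suggests where to look (compatibility of $U$ with cell attachments, or a path-object argument). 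As written, the proposition's hypothesis is not literally the input Hirschhorn's theorem demands, and something extra (true in the paper's intended applications, where the relevant $U$ has strong exactness properties) is being used implicitly. One small quibble in your wording: $U$ preserving filtered colimits does give you preservation of the transfinite composites, but it does not let you commute $U$ past the \emph{coproducts} in a cellular filtration, since arbitrary coproducts are not filtered colimits; however, this is inessential, because the coproducts can be absorbed into a longer transfinite sequence of single-cell pushouts. The genuine obstacle is, as you say, the pushouts themselves.
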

\begin{proof}
To see that this defines a model structure on $\cD$, note that since $U$ preserves filtered  colimits, for any small object $I \in \C$, the object $FI$ is small in $\cD$, so we may apply \cite{Hirschhorn} Theorem 11.3.2 to obtain the model structure on $\cD$.

Since $U$ reflects weak equivalences, by \cite{Hovey} Corollary 1.3.16, 
the functors $F \vdash U$ form a pair of Quillen equivalences if and only if  the morphisms $\oR \eta: A \to \oR U FA$  
are  
weak equivalences for all cofibrant $A \in \C$.
Since $U$ preserves weak equivalences, the map $UB \to \oR UB$ is a weak equivalence for all $B \in \cD$.
Thus the unit $\eta: A\to UFA$ is a weak equivalence if and only if  $\oR \eta$ is so. 
\end{proof}

Fix a Noetherian ring $R$.

\begin{definition}
Say that a simplicial $R$-algebra $A$ is finitely generated if there are finite sets $\Sigma_q \subset A_q$ of generators, closed under the degeneracy operations, with only finitely many elements of $\bigcup_q \Sigma_q$ being non-degenerate. 

Define $FG s\Alg_R$ to be the category of finitely generated simplicial $R$-algebras.  Define $FG dg_+\Alg_R$ to be the category of finitely generated non-negatively graded chain $R$-algebras (if $R$ is a $\Q$-algebra).
\end{definition}

\begin{definition}
Given $A \in s\Alg_R$, define $\hat{A}:=\Lim_n A/I^n_A$, for   $I_A = \ker (A \to \pi_0A)$. Given $A \in dg_+\Alg_R$, define $\hat{A}:=\Lim_n A/I^n_A$,  for  $I_A = \ker (A \to \H_0A)$.
\end{definition}

\begin{definition}
Define  $\widehat{FG s\Alg_R}$ to be the full subcategory of $s\Alg_R$ consisting of objects of the form $\hat{A}$, for $A \in FG s\Alg_R$. Define  $\widehat{FG dg_+\Alg_R}$ to be the full subcategory of $dg_+\Alg_R$ consisting of objects of the form  $\hat{A}$, for $A \in FG dg_+\Alg_R$
\end{definition}

\begin{lemma}\label{indfinitecolim}
The  categories $\widehat{FG s\Alg_R}$ and $\widehat{FG dg_+\Alg_R}$ contain all finite colimits.
\end{lemma}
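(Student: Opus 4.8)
The plan is to build finite colimits in $\widehat{FG s\Alg_R}$ (and $\widehat{FG dg_+\Alg_R}$) by taking the corresponding colimit in $FG s\Alg_R$ and then completing. First I would observe that $FG s\Alg_R$ has all finite colimits: it has an initial object $R$, and given a finite diagram of finitely generated simplicial $R$-algebras, its colimit in $s\Alg_R$ (a coequalizer of a map between pushouts, which in turn are tensor products over $R$) is again finitely generated, since finitely generated objects are closed under finite coproducts and quotients. Thus the inclusion $FG s\Alg_R \hookrightarrow s\Alg_R$ creates finite colimits onto finitely generated objects. The same argument applies verbatim in the dg setting over a $\Q$-algebra $R$.

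Next I would show that for a finite diagram $\{A_j\}$ in $FG s\Alg_R$, the completion $\widehat{\colim_j A_j}$ (where $\colim$ is taken in $FG s\Alg_R$) serves as the colimit of $\{\hat A_j\}$ in $\widehat{FG s\Alg_R}$. The key point is a universal property: completion is left adjoint, on the relevant categories, to the inclusion of complete algebras, in the sense that for $A \in FG s\Alg_R$ and $\hat B$ a completed finitely generated algebra, any map $A \to \hat B$ factors uniquely through $\hat A \to \hat B$. This holds because a map $A \to \hat B$ sends the augmentation ideal $I_A = \ker(A\to\pi_0A)$ into $I_{\hat B}$ (compatibly with the maps on $\pi_0$), hence is continuous for the $I_A$-adic and $I_{\hat B}$-adic topologies, and $\hat B$ is already $I_{\hat B}$-adically complete, so it extends uniquely to $\hat A$. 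Given this, for $T \in \widehat{FG s\Alg_R}$ one computes
$$
\Hom_{s\Alg_R}(\widehat{\textstyle\colim_j A_j},\, T) \cong \Hom_{s\Alg_R}(\textstyle\colim_j A_j,\, T) \cong \lim_j \Hom_{s\Alg_R}(A_j, T) \cong \lim_j \Hom_{s\Alg_R}(\hat A_j, T),
$$
where the first isomorphism is the factorization property just described, the second is the colimit property in $FG s\Alg_R$, and the third is the same factorization property applied to each $A_j$. This exhibits $\widehat{\colim_j A_j}$ as the colimit in the full subcategory $\widehat{FG s\Alg_R}$.

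There is one subtlety to handle carefully, which I expect to be the main obstacle: the factorization property requires knowing that a ring map $A \to \hat B$ really does carry $I_A$ into $I_{\hat B}$ and is continuous, i.e. that the induced filtration behaves well and that $\lim_n \hat B/I_{\hat B}^n = \hat B$ genuinely (so that the extension to $\hat A = \lim_n A/I_A^n$ exists and is unique). For simplicial rings one must also check that these constructions are taken degreewise and that the resulting object still lies in the category — i.e. is of the form $\hat C$ for some $C \in FG s\Alg_R$ — which it is, by taking $C = \colim_j A_j$. One should also note that $I_A^n$ is a well-behaved (degreewise) ideal and that $A/I_A^n$ computations commute with the finite colimit appropriately; since everything in sight is finitely generated and we work degreewise over the Noetherian ring $R$, no completeness pathologies arise. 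The dg case is identical, replacing $\pi_0$ by $\H_0$ throughout and using that $R$ is a $\Q$-algebra so that $FG dg_+\Alg_R$ is defined.
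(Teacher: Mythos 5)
The paper's own proof is extremely terse: it exhibits the initial object $\hat R$ and the pushout $A\hat\ten_B C := \widehat{A\ten_B C}$ for arbitrary cospans $A \la B \to C$ in $\widehat{FG s\Alg_R}$, and leaves it at that (an initial object plus pushouts gives all finite colimits). Your strategy is in the same spirit --- compute the colimit in the ambient category and complete, using a reflection/universal property of completion to see it is the colimit in the subcategory --- but your execution has a genuine gap.

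The gap: you only prove that colimits exist for diagrams in $\widehat{FG s\Alg_R}$ that are of the form $\{\hat A_j\}_j$ for a diagram $\{A_j\}_j$ in $FG s\Alg_R$. That is \emph{not} every finite diagram in $\widehat{FG s\Alg_R}$. The completion functor $FG s\Alg_R \to \widehat{FG s\Alg_R}$ is essentially surjective on objects but it is not full: by the very factorization property you establish, a morphism $\hat A \to \hat B$ corresponds to a morphism $A \to \hat B$, not to a morphism $A \to B$, so in general such a map does not lift to $FG s\Alg_R$. For a cospan $\hat A \la \hat B \to \hat C$ given by unliftable maps, your Hom computation never starts. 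The paper avoids this by phrasing the pushout formula directly for any cospan in $\widehat{FG s\Alg_R}$: form $A\ten_B C$ in $s\Alg_R$ (where the colimit always exists) and then complete. To rescue your approach one must show the cospan can be replaced by an isomorphic one that \emph{does} lift --- e.g.\ replace $A_0$ (with $\hat A_0 = A$) by the finitely generated subalgebra $A_1 \subseteq \hat A_0$ generated by $A_0$ together with the image of (generators of) $B_0$ under $B_0 \into \hat B_0 \to \hat A_0$, and similarly for $C$; one then needs $\hat A_1 = \hat A_0$, which uses the same reflection/idempotence properties. You never acknowledge this lifting issue.

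Separately, you correctly flag that the whole argument rests on a continuity/completeness claim (that a map $A \to \hat B$ extends uniquely to $\hat A \to \hat B$, and that $\hat B$ really is $I_{\hat B}$-adically complete), but you dispose of it with ``no completeness pathologies arise,'' which is not an argument. This is precisely the point where the simplicial/dg structure (ideals taken degreewise, $\pi_0$ resp.\ $\H_0$ entering the definition of $I$) and the Noetherian hypothesis must actually be used, and it is also what one needs to see that the completed pushout lands in $\widehat{FG s\Alg_R}$ rather than merely in the complete objects of $s\Alg_R$. The paper glosses over this too, but your proposal claims to be ``handling it carefully,'' and it isn't.
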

\begin{proof}
 The initial object is $\hat{R}$ (which equals $R$ whenever $R$ is discrete), and the cofibre coproduct  of $A \la B \to C$ is given by 
\[
 A\hat{\ten}_BC:= \widehat{A\ten_BC}.
\]
\end{proof}

\begin{proposition}\label{indcolex}
 For  $\C=\widehat{FG s\Alg_R}$ or $\widehat{FG dg_+\Alg_R}$, the category $\ind(\C)$  is equivalent to the category of left-exact functors  $F:\C^{\op}\to \Set$, i.e. functors for which 
\begin{enumerate}
 
\item $F(\hat{R})$ is the one-point set, and

\item the map
\[
 F(A\hat{\ten}_BC) \to F(A)\by_{F(B)}F(C)
\]
is an isomorphism for all diagrams $A \la B \to C$.
\end{enumerate}

The equivalence is given by sending a direct system $\{A_{\alpha}\}_{\alpha}$ to the functor $F(B) = \LLim_{\alpha} \Hom_{\C}(B,A_{\alpha})$.
\end{proposition}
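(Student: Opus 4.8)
This is essentially the standard description of an ind-category as the category of left-exact presheaves (cf.\ SGA~4, Exp.\ I, or Kashiwara--Schapira, \emph{Categories and Sheaves}), so the plan is to adapt that argument, the only specific input being Lemma \ref{indfinitecolim}. First note that each of $\C=\widehat{FG s\Alg_R}$ and $\widehat{FG dg_+\Alg_R}$ is essentially small, so $\ind(\C)$ and the presheaf category $\mathrm{Fun}(\C^{\op},\Set)$ are locally small. Writing $h_A:=\Hom_{\C}(-,A)\co\C^{\op}\to\Set$ for the Yoneda presheaf, I would consider the functor
$$
\Phi\co\ind(\C)\lra\mathrm{Fun}(\C^{\op},\Set),\qquad \Phi(\{A_{\alpha}\}_{\alpha})=\LLim_{\alpha}h_{A_{\alpha}},
$$
and show it is fully faithful with essential image precisely the functors satisfying (1) and (2).

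Full faithfulness I would obtain by unwinding both sides. By definition $\Hom_{\ind(\C)}(\{A_{\alpha}\},\{B_{\beta}\})=\Lim_{\alpha}\LLim_{\beta}\Hom_{\C}(A_{\alpha},B_{\beta})$, while since $\Hom(-,G)$ turns colimits into limits, colimits of presheaves are computed pointwise, and $h_A$ is representable,
$$
\Hom\big(\LLim_{\alpha}h_{A_{\alpha}},\,\LLim_{\beta}h_{B_{\beta}}\big)\cong\Lim_{\alpha}\big(\LLim_{\beta}h_{B_{\beta}}\big)(A_{\alpha})\cong\Lim_{\alpha}\LLim_{\beta}\Hom_{\C}(A_{\alpha},B_{\beta})
$$
by the Yoneda lemma. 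Checking that $\Phi$ realises this bijection and that it is natural in both variables is routine bookkeeping.

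For the essential image, one inclusion is formal: each $h_A$ sends finite colimits of $\C$ to finite limits of $\Set$ (Yoneda), and filtered colimits commute with finite limits in $\Set$, so every $\LLim_{\alpha}h_{A_{\alpha}}$ satisfies (1) and (2). Conversely, given $F$ satisfying (1) and (2), I would form its category of elements, with objects the pairs $(A,x)$, $A\in\C$, $x\in F(A)$, and morphisms $(A,x)\to(B,y)$ the maps $f\co A\to B$ in $\C$ with $F(f)(y)=x$; the co-Yoneda (density) formula gives a canonical isomorphism $F\cong\LLim_{(A,x)}h_A$ over this category. The heart of the proof is to show that this indexing category is filtered, and this is where Lemma \ref{indfinitecolim} is used: it is nonempty since $F(\hat{R})$ is a one-point set by (1); any two objects $(A,x),(B,y)$ map to $(A\hat{\ten}_{\hat{R}}B,z)$, where $z$ is the element corresponding to $(x,y)$ under the isomorphism $F(A\hat{\ten}_{\hat{R}}B)\cong F(A)\by_{F(\hat{R})}F(B)=F(A)\by F(B)$ of (2); and any parallel pair $f,g\co(A,x)\rightrightarrows(B,y)$ is coequalised by the map to $(C,w)$, where $C$ is the coequaliser of $f,g$ in $\C$ and $w\in F(C)$ corresponds to $y$ under the identification of $F(C)$ with the equaliser of $F(f),F(g)\co F(B)\rightrightarrows F(A)$ --- note that (1) and (2) together force $F$ to send \emph{all} finite colimits of $\C$, not merely pushouts, to finite limits. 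Hence $F$ is a filtered colimit of representables, i.e.\ lies in the essential image of $\Phi$, and $\Phi$ is the asserted equivalence.

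The only genuine work is the filteredness of the category of elements, which rests squarely on Lemma \ref{indfinitecolim} (supplying the coproducts $A\hat{\ten}_{\hat{R}}B$ and the coequalisers used above); the full faithfulness of $\Phi$ and the characterisation of which presheaves are filtered colimits of representables are routine applications of the Yoneda lemma.
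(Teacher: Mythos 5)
Your proof is correct, but it takes a genuinely different route from the paper's. The paper's proof is very short: it observes that for $A\in\C$, subobjects of $A^{\op}$ in $\C^{\op}$ correspond to (simplicial or dg) ideals of $A$, uses Noetherianity to get the ascending chain condition on such ideals, concludes that $\C^{\op}$ is an Artinian category with finite limits, and then cites the Corollary to Proposition 3.1 of Grothendieck's \emph{Technique de descente} for the equivalence. You instead give the self-contained proof of the general fact that for a small category $\C$ with finite colimits, $\ind(\C)$ is equivalent to the left-exact presheaves on $\C$ (the SGA~4 / Kashiwara--Schapira argument): full faithfulness via Yoneda, and essential surjectivity by showing the category of elements of a left-exact $F$ is filtered using the coproducts and coequalisers supplied by Lemma~\ref{indfinitecolim}. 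Your filteredness verification is correct, including the observation that conditions (1) and (2) force preservation of \emph{all} finite colimits (since coequalisers are expressible via pushouts and coproducts, and coproducts via pushouts over the initial object). The trade-off: the paper's proof is shorter but leans on an external reference and on the (somewhat delicate) Noetherian/Artinian structure of the completed objects; yours is longer but uses only Lemma~\ref{indfinitecolim} and standard category theory, so its hypotheses are nominally weaker.
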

\begin{proof}
For $A \in \C$, a subobject of $A^{\op} \in \C^{\op}$ is just a surjective map $A \to B$ in $\C$, or equivalently a simplicial (resp. dg) ideal of $A$. Since $A$ is Noetherian, it satisfies ACC on such ideals, and hence $A^{\op}$ satisfies DCC on strict subobjects. Therefore $\C^{\op}$ is an Artinian category containing all finite limits, so the required  result is given by \cite{descent}, Corollary to Proposition 3.1.
\end{proof}

\begin{proposition}\label{cNhat}
There are cofibrantly generated model structures on the categories $\ind(\widehat{FGs\Alg_R})$ and $\ind(\widehat{FGdg_+\Alg_R})$ in which a morphism $f: \{A_{\alpha}\}_{\alpha} \to  \{B_{\beta}\}_{\beta}$ is a fibration or a weak equivalence whenever the corresponding map
\[
\LLim f: \LLim_{\alpha}A_{\alpha}\to \LLim_{\beta}B_{\beta}
\]
in $s\Alg_R $ or $dg_+\Alg_R$ is so. 

For these model structures, the functors
\begin{eqnarray*}
U:\ind(\widehat{FGs\Alg_R})&\to& s\Alg_R\\
U:\ind(\widehat{FGdg_+\Alg_R})&\to& dg_+\Alg_R
\end{eqnarray*}
given by $U(\{A_{\alpha}\}_{\alpha})= \LLim_{\alpha}A_{\alpha}$ are right Quillen equivalences.
\end{proposition}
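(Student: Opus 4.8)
The plan is to apply Proposition~\ref{adjointmodel}, taking $\C$ to be $s\Alg_R$ (resp.\ $dg_+\Alg_R$, which is cofibrantly generated since $R$ is a $\Q$-algebra) with its usual model structure, whose generating trivial cofibrations may be chosen with finitely generated domain and codomain, taking $\cD=\ind(\widehat{FGs\Alg_R})$ (resp.\ $\ind(\widehat{FGdg_+\Alg_R})$), and taking $U$ to be the realisation $\{A_\alpha\}_\alpha\mapsto \LLim_\alpha A_\alpha$. Thus I must check that $\cD$ is complete and cocomplete, that $U$ preserves filtered colimits and admits a left adjoint $F$, that $UF$ sends generating trivial cofibrations to weak equivalences, and finally that the unit $B\to UFB$ is a weak equivalence on cofibrant $B$. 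Cocompleteness of $\cD$ holds because $\ind$ of a category with finite colimits is cocomplete, and $\widehat{FGs\Alg_R}$ has finite colimits by Lemma~\ref{indfinitecolim}; completeness holds because by Proposition~\ref{indcolex} $\cD$ is the category of left-exact functors $\widehat{FGs\Alg_R}^{\op}\to\Set$, and such functors are closed under small limits in the functor category, arbitrary limits commuting with the finite connected limits in the left-exactness conditions. That $U$ preserves filtered colimits is immediate, since a filtered colimit in $\ind(\widehat{FGs\Alg_R})$ is computed by amalgamating indexing diagrams and filtered colimits commute with filtered colimits in $s\Alg_R$.

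For the left adjoint, I would construct $F$ explicitly: write $B$ as the filtered colimit of its finitely generated sub-(simplicial or dg) $R$-algebras $B_\alpha$ and set $F(B):=\{\hat B_\alpha\}_\alpha$. The adjunction isomorphism rests on two inputs. First, since $R$ is Noetherian, each finitely generated $B_\alpha$ is finitely presented, hence a compact object of $s\Alg_R$ --- this is the ACC statement invoked in the proof of Proposition~\ref{indcolex}. Secondly, for $B_\alpha$ finitely generated and $\hat A\in\widehat{FGs\Alg_R}$, restriction along $B_\alpha\to\hat B_\alpha$ is a bijection $\Hom(\hat B_\alpha,\hat A)\cong\Hom(B_\alpha,\hat A)$: a map $B_\alpha\to\hat A$ carries $I_{B_\alpha}=\ker(B_\alpha\to\pi_0B_\alpha)$ into $I_{\hat A}$ (completion does not change $\pi_0$), so for each $n$ the composite $B_\alpha\to\hat A\to A/I_A^n$ kills $I_{B_\alpha}^n$ and factors through $B_\alpha/I_{B_\alpha}^n$; passing to the limit over $n$ yields the extension $\hat B_\alpha\to\hat A$, uniqueness following from Artin--Rees. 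Combining these,
$\Hom_\cD(F(B),\{A_\gamma\}_\gamma)=\Lim_\alpha\LLim_\gamma\Hom(\hat B_\alpha,A_\gamma)=\Lim_\alpha\LLim_\gamma\Hom(B_\alpha,A_\gamma)=\Lim_\alpha\Hom(B_\alpha,\LLim_\gamma A_\gamma)=\Hom_{s\Alg_R}(B,U\{A_\gamma\}_\gamma)$,
where compactness of $B_\alpha$ is used at the third step.

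The remaining ingredient is that $A\to\hat A$ is a weak equivalence for every finitely generated $A$. Here $\pi_0 I_A=0$, so by a standard connectivity estimate $I_A^n$ has homotopy concentrated in degrees $\ge n$, whence $A\to A/I_A^n$ is an isomorphism on $\pi_i$ for $i<n$; since each $A/I_A^{n+1}\onto A/I_A^n$ is a fibration, $\hat A=\Lim_n A/I_A^n$ is a homotopy limit of a tower eventually constant on every homotopy group, so $\pi_*\hat A\cong\pi_* A$ (the dg case is identical with $\H_0$ for $\pi_0$). Consequently the unit $B\to UFB=\LLim_\alpha\hat B_\alpha$ is a filtered colimit of the weak equivalences $B_\alpha\to\hat B_\alpha$, hence a weak equivalence for \emph{every} $B$; in particular $UF$ preserves all weak equivalences, so certainly sends generating trivial cofibrations to weak equivalences, and the unit criterion of Proposition~\ref{adjointmodel} is met, giving both the model structures and the fact that the functors $U$ are right Quillen equivalences. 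I expect the main obstacle to be precisely the homotopy-invariance of completion for finitely generated algebras, together with the compactness and extension facts underpinning the adjunction: these are exactly the places where the Noetherian hypothesis on $R$ is genuinely needed, whereas parts concerning completeness, cocompleteness and preservation of filtered colimits are formal once Lemma~\ref{indfinitecolim} and Proposition~\ref{indcolex} are available.
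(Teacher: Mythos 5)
Your overall strategy is exactly the paper's: apply Proposition~\ref{adjointmodel} with $U = \LLim$, construct the left adjoint $F$ by completing finitely generated subalgebras, and reduce the Quillen equivalence to showing that $A \to \hat{A}$ is a weak equivalence for $A$ finitely generated. The completeness/cocompleteness and filtered-colimit arguments match the paper's (the paper cites Isaksen for arbitrary coproducts in the ind-category, but this is the same content). Your more explicit verification of the adjunction isomorphism is sound.

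The gap is in the crucial final step. You assert that ``$\pi_0 I_A = 0$, so by a standard connectivity estimate $I_A^n$ has homotopy concentrated in degrees $\ge n$.'' This claim is false in general, even for finitely generated algebras over a Noetherian base. Take $A_0 = k[x,y]/(y^2)$, $A_1 = A_0\cdot u$ with $du = xy$, and $A_n = 0$ for $n \ge 2$. Then $I_A = (xy, u)$, so $(I_A^2)_0 = (x^2y^2) = 0$ and $(I_A^2)_1 = (xyu)$ with $d(xyu) = x^2y^2 = 0$; hence $\H_1(I_A^2) = (xyu) \ne 0$, contradicting the claimed vanishing of $\pi_i I_A^n$ for $i < n$. (In this example $I_A^3 = 0$, so the conclusion $A \simeq \hat A$ happens to hold trivially, but the argument you give for it does not.) Notice also that your sketch of this step makes no use of Noetherianness, whereas you correctly flag that this is exactly where the hypothesis ought to be needed. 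The paper instead invokes \cite{stacks}~Theorem~\ref{stacks-fthm} (simplicial case) and \cite{stacks}~Lemma~\ref{stacks-dgshrink} (dg case), which establish the weak equivalence $A \to \hat A$ using the Noetherian hypothesis via Artin--Rees-type control of the $I_A$-adic filtration, not a bare connectivity estimate on powers of the augmentation ideal. This is a genuine missing ingredient, not just a stylistic difference; without it the Quillen-equivalence claim is unproved.
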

\begin{proof}
We begin by showing that $\ind(\widehat{FGs\Alg_R})$ and $\ind(\widehat{FGdg_+\Alg_R})$ are complete and cocomplete. By Lemma \ref{indfinitecolim}, they contain finite colimits, and the proof of \cite{isaksen} Proposition 11.1 then  ensures that they contain arbitrary coproducts, and hence arbitrary colimits. It follows immediately from Proposition \ref{indcolex} that the categories contain arbitrary limits, since any limit of left-exact functors is left-exact. 

We  need to establish that the functors $U$ have left adjoints. Since $R$ is Noetherian, finitely generated objects over $R$ are finitely presented, so the functors
\begin{eqnarray*}
\LLim:\ind(FGs\Alg_R)&\to& s\Alg_R\\
\LLim:\ind(FGdg_+\Alg_R)&\to& dg_+\Alg_R
\end{eqnarray*}
are equivalences of categories. The left adjoints
 \begin{eqnarray*}
F:\ind(FGs\Alg_R)&\to& \ind(\widehat{FGs\Alg_R})\\
F:\ind(FGdg_+\Alg_R)&\to&\ind(\widehat{FGdg_+\Alg_R}) 
\end{eqnarray*}
to $U$ are thus given by $\{A_{\alpha}\}_{\alpha}\mapsto \{\hat{A}_{\alpha}\}_{\alpha}$.

It is immediate that $U$ preserves filtered colimits, so we may apply Proposition \ref{adjointmodel} to construct the model structures. It only remains to show that $U$ is a Quillen equivalence. By Proposition \ref{adjointmodel}, we need only show that, for any cofibrant $A \in s\Alg_R$ or $A \in dg_+\Alg_R$, the map
\[
A \to UFA 
\]
is a weak equivalence. If we write $A= \LLim_{\alpha} A_{\alpha}$, for $A_{\alpha} \in FGs\Alg_R$ (or $A_{\alpha} \in FGdg_+\Alg_R$), then
\[
UFA= \LLim_{\alpha} \hat{A}_{\alpha}.
\]

Thus it suffices to show that for $A \in FGs\Alg_R$ (or $A \in FGdg_+\Alg_R$), the map $A \to \hat{A}$ is a weak equivalence. If $A \in FGs\Alg_R$, then each $A_n$ is Noetherian, so \cite{stacks2} Theorem \ref{stacks-fthm} gives the required equivalence. If $A \in FGdg_+\Alg_R$, then $A_0$ is Noetherian and each $A_n$ is a finite $A_0$-module, so \cite{stacks2} Lemma \ref{stacks-dgshrink} gives the required equivalence.
\end{proof}

\begin{lemma}\label{hatful}
The  category $\ind(\widehat{FGs\Alg_R})$ (resp. $\ind(\widehat{FGdg_+\Alg_R})$ ) is equivalent to a full subcategory $\C$ of $s\Alg_R$ (resp. $dg_+\Alg_R$). If $I_A= \ker(A \to \H_0A)$ , then $A$ is an object of $\C$ if and only if it contains the $I_A$-adic completions of all its finitely generated subalgebras.
\end{lemma}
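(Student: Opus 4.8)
The plan is to realise $\C$ as the essential image of the functor $U$ of Proposition~\ref{cNhat}, which sends $\{A_\alpha\}_\alpha$ to $\LLim_\alpha A_\alpha$. The substantive point is that $U$ is fully faithful as a functor of ordinary categories: once that is known, $\C$ is by definition the full subcategory on the objects isomorphic to some $\LLim_\alpha\hat A_\alpha$ with $A_\alpha \in FGs\Alg_R$ (resp.\ $FGdg_+\Alg_R$), and it only remains to recognise which objects those are.

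For full faithfulness one must show that for all $\{A_\alpha\},\{B_\beta\}$ the canonical map
\[
\Hom_{\ind}(\{A_\alpha\},\{B_\beta\}) = \Lim_\alpha\LLim_\beta\Hom(A_\alpha,B_\beta)\ \lra\ \Lim_\alpha\Hom(A_\alpha,\LLim_\beta B_\beta) = \Hom(U\{A_\alpha\},U\{B_\beta\})
\]
is bijective, which reduces to the assertion that each $\hat A_\alpha$ is ``small'' relative to filtered systems in $\widehat{FGs\Alg_R}$ (resp.\ $\widehat{FGdg_+\Alg_R}$): for $A \in FGs\Alg_R$ and a filtered system $\{B_\beta\}$ of such completions, $\LLim_\beta\Hom(\hat A,B_\beta) \to \Hom(\hat A,\LLim_\beta B_\beta)$ is a bijection. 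Here I would use, exactly as in the proof of Proposition~\ref{cNhat}: that $R$ Noetherian makes a finitely generated $A$ finitely presented, so $\Hom(A,-)$ commutes with filtered colimits; that $\hat A$ is then itself $I_{\hat A}$-adically complete, with $I_{\hat A}=I_A\hat A$ and $\H_0\hat A\cong\H_0 A$ (resp.\ $\pi_0$) --- the input requiring care in the dg case, where one controls the $I_A$-adic filtration degree by degree and reduces to adic completions of the Noetherian ring $A_0$ and its finite modules $A_n$; and that any morphism $B\to C$ carries $I_B$ into $I_C$ (being compatible with the projections to $\H_0$, resp.\ $\pi_0$), hence is continuous and, when $C$ is complete, extends to $\hat B\to C$. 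The point is then that, given $f\co\hat A\to\LLim_\beta B_\beta$, finite presentation of $A$ makes $f|_A$ factor through some $B_{\beta_0}$; after enlarging $\beta_0$ this factorisation is continuous and so extends to $\hat A\to B_{\beta_0}$; and because every element of $\hat A$ is an $I_A$-adic limit of elements of $A$ while $f$ is a ring map --- so e.g.\ it sends the internal relation $(1-t)^{-1}=\sum_n t^n$ for $t\in I_A$ to the convergent geometric series in the complete $B_{\beta_0}$ --- the constructed extension recovers $f$; density gives injectivity of the comparison map similarly.

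Granting this, $\C$ is reflective in $s\Alg_R$ (resp.\ $dg_+\Alg_R$), with reflector $A \mapsto \tilde A := \LLim_{A'}\widehat{A'}$, the colimit running over the filtered poset of finitely generated subalgebras $A' \subseteq A$, with transition maps the completions of the inclusions (well defined since any ring map carries $I$ into $I$): indeed $\tilde A\in\C$ by construction, and any map from $A$ to an object of $\C$ factors uniquely through $A\to\tilde A$ by the same finite-presentation-plus-continuity argument applied to each $A'$. Hence $A\in\C$ if and only if the unit $A\to\tilde A$ is an isomorphism, which is exactly the stated criterion: if $A\cong\tilde A$ then each $\widehat{A'}$ sits inside $A$, compatibly in $A'$, via $\widehat{A'}\to\tilde A\cong A$; conversely, if $A$ contains all such $\widehat{A'}$ compatibly, then --- since $A=\bigcup_{A'}A'$ is the filtered union of its finitely generated subalgebras --- it is also the filtered union $\bigcup_{A'}\widehat{A'}=\LLim_{A'}\widehat{A'}=\tilde A$.

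I expect the main obstacle to be the smallness statement in the second paragraph: a priori a ring homomorphism out of the completion $\hat A$ need not be ``continuous'', and one must exploit that the internal relations of $\hat A$ (itself complete) force any such homomorphism into the completion of the image of a finitely generated subalgebra, hence into a single term of the filtered system. This rests on the two Noetherian-finiteness inputs above, the dg version of the second being the delicate one and handled precisely as in Proposition~\ref{cNhat}. A subsidiary point is that, for $A\in\C$, the structure maps $\widehat{A'}\to A$ of finitely generated subalgebras are injective --- so that ``$A$ contains $\widehat{A'}$'' is literally true --- which follows from flatness of adic completion over the Noetherian algebras involved.
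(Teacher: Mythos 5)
Your top-level strategy is the same as the paper's: realise $\C$ as the essential image of $U\colon\ind(\widehat{FGs\Alg_R})\to s\Alg_R$, establish full faithfulness of $U$, and then read off the characterisation of the essential image from $A=UFA$. The difference is in how full faithfulness is argued. The paper does not work directly with ring maps \emph{out of} $\hat A$; instead it writes $\hat{A'}=\LLim_{\alpha}A_{\alpha}$ as the filtered union of its finitely generated subalgebras $A_{\alpha}\supset A'$, uses finite presentation of each $A_{\alpha}$ to commute $\Hom(A_{\alpha},-)$ past the filtered colimit, trades $\Hom(A_{\alpha},B_{\beta})$ for $\Hom(\widehat{A_{\alpha}},B_{\beta})$ (maps into the complete $B_{\beta}$ factor through completion), and closes the loop by observing that $\widehat{A_{\alpha}}=\hat{A'}$ since $A'\subset A_{\alpha}\subset\hat{A'}$. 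This sidesteps ever having to argue that a map $\hat A\to\LLim B_{\beta}$ is determined by its restriction to $A$.

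Your route argues the ``smallness'' of $\hat A$ head-on, and the place where it is shakiest is exactly the step the paper avoids: showing that the extension $g\colon\hat A\to B_{\beta_0}$ of $f|_A$ agrees with the given $f$. The geometric-series illustration does not carry the weight you want it to (for $t\in I_A$, $1-t$ need not be a unit of $\hat A$ --- it is only a unit modulo each $I^n$ --- and in any case only special elements have such presentations). The clean statement you need, and which also underlies the paper's $\Hom(A_{\alpha},B_{\beta})\cong\Hom(\widehat{A_{\alpha}},B_{\beta})$, is that any ring map $\phi\colon\hat A\to C$ automatically carries $I_{\hat A}$ into $I_C$ (since it commutes with $\pi_0$, resp.\ $\H_0$), hence $I_{\hat A}^{\,n}$ into $I_C^{\,n}$; combined with $\hat A/I_{\hat A}^{\,n}\cong A/I_A^{\,n}$ for Noetherian $A$, this gives that restriction to $A$ is injective on $\Hom(\hat A,C)$ for $C$ complete, so $f=g$. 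The ``after enlarging $\beta_0$'' hedge is unnecessary once this is in place (every ring map is continuous for the $I$-adic filtrations). Your reflector $A\mapsto\tilde A$ and the resulting characterisation of $\C$ are fine, and agree with the paper's remark that $A\in\C$ iff $A=UFA$; the subsidiary point about injectivity of $\widehat{A'}\to A$ is a reasonable one to flag, and does follow from flatness of Noetherian adic completion as you say. In summary: correct in substance, modulo replacing the geometric-series heuristic with the automatic-continuity argument; the paper's filtered-colimit rewriting is the slicker way to package the same Noetherian inputs.
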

\begin{proof}
It is immediate that $A$ satisfies the condition above if and only if $A=UFA$ for the functors $U$ and $F$ from the proof of Proposition \ref{cNhat}.
Thus we need only show that the functor $U:\ind(\widehat{FGs\Alg_R}) \to Fs\Alg $ given by $\{A_{\alpha}\} \mapsto \LLim_{\alpha} A_{\alpha}$ is full and faithful. It suffices to show that for $A \in \widehat{FGs\Alg_R}$ and $B \in \ind(\widehat{FGs\Alg_R})$, $\Hom(A, \LLim B_{\beta})= \LLim_{\beta}  \Hom(A,B_{\beta})$. 

To do this, recall that $A =\widehat{A'}$ for some finitely generated $A'$, and express $A$ as $\LLim A_{\alpha}$, for $A' \subset A_{\alpha} \in FGs\Alg_R$. Then
\begin{eqnarray*}
\Hom(A, \LLim B_{\beta}) &=& \Lim_{\alpha} \Hom(A_{\alpha}, \LLim B_{\beta})\\
&=& \Lim_{\alpha} \LLim_{\beta} \Hom(A_{\alpha},  B_{\beta})\\
&=& \Lim_{\alpha} \LLim_{\beta}\Hom(\hat{A}_{\alpha}, B_{\beta}),
\end{eqnarray*}
but $\hat{A}_{\alpha}= A$, giving the required result.
\end{proof}

\subsection{Nilpotent algebras}


\begin{definition}\label{littledef}
Say that a surjection $A \to B$ in  
$dg_+\Alg_R$ (resp. $s\Alg_R$)
is a \emph{little extension} if the kernel $K$ satisfies $I_A\cdot K=0$. Say that an acyclic little extension is \emph{tiny} if $K$ (resp. $NK$) is of the form $\cone(M)[-r]$ for some $\H_0A$-module  (resp. $\pi_0A$-module) $M$.
\end{definition}

Note that acyclic little extensions are necessarily square-zero, but that arbitrary little extensions need not be.

\begin{definition}
Define $dg_+\cN_R$ (resp. $s\cN_R$)  to be the full subcategory of $dg_+\Alg_R$ (resp. $s\Alg_R$) consisting of objects $A$
 for which the map $A \to \H_0A$ (resp. $A \to \pi_0A$) has nilpotent kernel. Define $dg_+\cN_R^{\flat}$ (resp. $s\cN_R^{\flat}$) to be the full subcategory of $dg_+\cN_R$ (resp. $s\cN_R$) consisting of objects $A$
 for which $A_i=0$ (resp. $N_iA=0$) for all $i \gg 0$.
\end{definition}

\begin{lemma}\label{wtiny}
Every  surjective weak equivalence $f:A \to B$ in $dg_+\cN_R^{\flat}$ (resp. $s\cN_R^{\flat}$) factors as a composition of tiny acyclic  extensions.
\end{lemma}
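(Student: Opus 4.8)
The plan is to realise $f$ as a finite tower of tiny acyclic extensions, peeling one disk off the top of the kernel at each stage. I will describe the dg case; the simplicial case runs in parallel, working with $NK$ throughout (as in the definition of ``tiny'') and using the Dold--Kan correspondence, the only extra bookkeeping being that the subcomplex of $NK$ constructed below must be checked to be the normalisation of a simplicial ideal of $A$, which holds because the relevant multiplications are Eilenberg--Zilber shuffle products.

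So let $f\colon A\to B$ be a surjective weak equivalence in $dg_+\cN_R^{\flat}$ and set $K=\ker f$. Then $0\to K\to A\to B\to 0$ is exact, so $K$ is acyclic by the long exact homology sequence; it is bounded because $A$ is, and $K\subseteq I_A$ since $\H_0A\cong\H_0B$. Write $I=I_A$, so $I_i=A_i$ for $i\ge 1$, $I_0=dA_1$ is a nilpotent ideal of $A_0$, and $I^N=0$ for some $N$. The heart of the argument is: whenever $K\ne 0$, there is a sub-dg-ideal $L\subseteq K$ of $A$ with $I\cdot L=0$ which is a disk. Let $q$ be the top degree of $K$ and put $M:=\Ann_{K_q}(I_0)$; this is nonzero because $I_0$ is nilpotent and $K_q\ne 0$. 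Since $q$ is the top degree, $I_{\ge 1}$ kills $K_q$, so $M$ is exactly the degree-$q$ part of the sub-dg-ideal $J:=\Ann_K(I)$ of $A$ (that $J$ is a sub-dg-ideal uses only that $I$ is a dg-ideal and that $A$ is associative), and $d$ is injective on $M$ because $\ker(d\colon K_q\to K_{q-1})=dK_{q+1}=0$. Hence the subcomplex $L$ with $L_q=M$, $L_{q-1}=dM$ and $L_i=0$ otherwise is isomorphic to $\cone(M)[-(q-1)]$; one checks it is a dg-ideal of $A$, the only non-routine inclusion being $A_1\cdot dM\subseteq M$, which holds since for $i_0\in I_0$, $a_1\in A_1$, $m\in M$ the element $(i_0a_1)(dm)$ lies in $I\cdot J=0$. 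As $I\cdot L=0$ and $M$ is an $\H_0A$-module, $A\to A/L$ is a tiny acyclic extension; moreover $L\subseteq I_A$ is square-zero, so $A/L$ is again in $dg_+\cN_R^{\flat}$, and $\ker(A/L\to B)=K/L$ is again acyclic and bounded by the homology sequence of $0\to L\to K\to K/L\to 0$.

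Iterating produces a tower $A\to A/L_1\to (A/L_1)/L_2\to\cdots$ of tiny acyclic extensions over $B$, and I would establish termination using the lexicographically ordered measure $\mu(K)=(q,m)$, where $q$ is the top degree of $K$ and $m$ is the least integer with $I_0^mK_q=0$ (and $\mu=0$ if $K=0$, which is forced once $q=0$ since an acyclic complex concentrated in degree $0$ vanishes). Indeed, if $m=1$ then $M=K_q$, so $(K/L)_q=0$ and $q$ drops; if $m\ge 2$ then $I_0^{m-1}K_q\subseteq M$, so the exponent attached to $K/L$ over $A/L$ is at most $m-1$ (the ideal $I_0$ only passes to a quotient of itself when the peeled disk reaches degree $0$, which does not weaken the estimate). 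Thus $\mu$ strictly decreases, the process stops with $K=0$, and the resulting finite composite $A\to\cdots\to B$ is the desired factorisation of $f$.

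The main obstacle is exactly the construction of $L$: naively peeling off the whole top degree $K_q$ need not give a little extension, since $K_q$ need not be annihilated by $I_0$, so one is forced to peel only the $I_0$-annihilator $M$; it is then membership in $\cN_R^{\flat}$ (nilpotence together with boundedness) that both guarantees $M\ne 0$, so that genuine progress is made at each step, and powers the termination argument. The remaining verifications (that $J$ and $L$ are dg-ideals, that the asserted module structures hold) are short but slightly fiddly, and rely repeatedly on $q$ being the top nonzero degree of $K$ so that $I_{\ge 1}$ acts as zero there.
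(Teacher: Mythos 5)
Your proof is correct, but it takes a genuinely different route from the paper. The paper first uses the good truncations $\tau_{\ge r}K$ to chop $K$ into contractible two-degree pieces, reducing to the case $K$ concentrated in degrees $r,r+1$, and then runs a separate induction on the least $s$ with $K_r\cdot I_A^s=0$, at each step passing to the quotient $K/I_AK$ and peeling off a piece of the form $\cone\bigl((K/I_AK)_r\bigr)[-r]$. You instead work with the full kernel and run a single lexicographic induction on $(q,m)$, where $q$ is the top degree of $K$ and $m$ the $I_0$-nilpotency index of $K_q$, peeling off the disk $L$ built on $M=\Ann_{K_q}(I_0)$. These two peelings are in a sense dual: the paper quotients by $I_A K$ and takes what survives, you take the $I_0$-annihilator. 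Your route has the modest advantage of avoiding the preliminary reduction to two-degree kernels, and your verification that $L$ is a dg-ideal with $I_A\cdot L=0$ and that $M$ is an $\H_0A$-module is cleanly localized (the key point, that $A_1\cdot dM\subseteq I\cdot J=0$ because $A_1=I_1$ and $dM\subseteq J=\Ann_K(I_A)$, is correct, and the termination argument including the degenerate $q=1$ case where $I_0$ passes to the quotient $I_0/dM$ is handled correctly). The paper's route is closer to the standard Postnikov-style decomposition. Your treatment of the simplicial case by transporting the construction through normalisation and Dold--Kan is essentially the paper's as well, though you state it more briefly; it would be cleaner to say, as the paper does, that normalisation induces an equivalence between simplicial ideals of $A$ and dg-ideals of $NA$, so the dg factorisation pulls back directly.
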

\begin{proof}
We first prove this for $dg_+\cN_R^{\flat}$. Let $K= \ker(f)$, and observe that the good truncations
\[
(\tau_{\ge r}K)_i= \left\{ \begin{matrix} K_i & i>r \\ \z_r K & i=r \\ 0 & i<r \end{matrix} \right.  
\]
are also dg ideals in $A$. Since $A$ is concentrated in degrees $[0,d]$ for some $d$, we get a factorisation of $f$ into acyclic surjections
\[
A= A/(\tau_{\ge d}K) \to A/(\tau_{\ge (d-1)}K)\to \ldots \to A/(\tau_{\ge 0}K)=B.
\]
We therefore reduce to the case where $K$ is concentrated in degrees $r,r+1$.

Let $s$ be least such that $K_r \cdot I_A^s=0$; if $s=1$ then $f$ is already a tiny acyclic extension.
We will proceed by induction on $s$. Since $K \onto (K/I_A K)$, we have $\H_r(K/I_AK )=0$. This means that the inclusion $\tau_{> r}( K/I_A K ) \to (K/I_A K )$ is a quasi-isomorphism of ideals in $A$. 
If we set $B':= (A/I_AK)/( \tau_{>r}K/I_AK)$ and $K'':=\ker(A \to B')$, then  $I_A\cdot K''=0$ so   $f'':B' \to B$ is an acyclic little extension. In fact, for $M:=(K/I_A\cdot K )_r$, we have $K'' = \cone(M)[-r]$, so $f''$ is a tiny acyclic extension.

Now, for $K':=\ker(f':A \to B')$ we have $K'_r= (I_AK)_r$, so $ K_r' \cdot I_A^{s-1}=0$, so by induction  $f'$ factors as a composition of tiny acyclic  extensions. This completes the inductive step.

Finally, for $f:A \to B$ in $s\cN_R^{\flat}  $ , normalisation gives an equivalence of categories between simplicial $A$-modules and non-negatively graded dg $NA$-modules. In particular, it gives an equivalence between the categories of ideals, and hence  quotients of $A$ correspond to quotients of $NA$. If $Nf$ is a tiny acyclic extension, then so is $f$,  since $NK$ is automatically an $\H_0NA$-module, and $\H_0NA= \pi_0A$. The proof above expresses
$NA \to NB$ as a composition of tiny acyclic extensions, which thus yields such an expression for $A\to B$. 
\end{proof}

\begin{definition}
 Define $\widehat{FGs\Alg_R}^{\flat}$ (resp. $\widehat{FGdg_+\Alg_R}^{\flat}$) to be the full subcategory of $\widehat{FGs\Alg_R}$ (resp. $\widehat{FGdg_+\Alg_R}$) consisting of objects $A$
 for which $A_i=0$ (resp. $N_iA=0$) for all $i \gg 0$.
\end{definition}

\begin{lemma}\label{wsmall}
For any surjective weak equivalence $f:A \to B$ in $\widehat{FGs\Alg_R}^{\flat}$ (resp. $\widehat{FGdg_+\Alg_R}^{\flat}$), the associated morphism
\[
\{A/I_A^n\} \to \{B/I_B^n\}
\]
in $\pro(dg_+\cN_R^{\flat})$ (resp. $\pro(s\cN_R^{\flat})$) is isomorphic to an inverse limit of surjective weak equivalences in $dg_+\cN_R$ (resp. $s\cN_R$).
\end{lemma}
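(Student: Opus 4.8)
The plan is to treat the dg case first and deduce the simplicial case by normalisation, exactly as at the end of the proof of Lemma \ref{wtiny} (since $N$ identifies dg ideals of $NA$ with simplicial ideals of $A$, and $\H_0NA=\pi_0A$). So let $f\colon A\to B$ be a surjective weak equivalence in $\widehat{FGdg_+\Alg_R}^{\flat}$. As in the proof of Proposition \ref{cNhat}, each $A_i$ is Noetherian, so $A/I_A^n$ and $B/I_B^n$ lie in $dg_+\cN_R^{\flat}$ (bounded since $A,B$ are, with $I_A/I_A^n$, $I_B/I_B^n$ nilpotent). Put $K:=\ker f$; since $f$ is a surjective weak equivalence, $K$ is acyclic, degreewise finitely generated (each $K_i$ is an ideal in the Noetherian $A_i$), and contained in $I_A$ (because $\H_0f$ is an isomorphism). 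Moreover $I_B=f(I_A)$, hence $I_B^n=f(I_A^n)$ and $B/I_B^n=A/(I_A^n+K)$, the pro-map in question being reduction modulo the image of $K$.

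\emph{Reduction.} The good truncations $\tau_{\ge r}K$ are dg ideals of $A$, and — because $K$ is acyclic — so are they and all their subquotients $\tau_{\ge r}K/\tau_{\ge r+1}K$; the latter are concentrated in the two adjacent degrees $r,r+1$ with $d$ an isomorphism from the top onto the bottom (using $\H_rK=0$), i.e.\ are of the form $\cone(M_r)[-r]$ for a degreewise finitely generated module $M_r$. As $K$ is bounded, this exhibits $f$ as a finite composite of surjective acyclic extensions $A/\tau_{\ge r+1}K\to A/\tau_{\ge r}K$, each with kernel of this shape. Since a composite of pro-arrows each isomorphic to an inverse system of surjective weak equivalences is again of that form, we may assume $K=\cone(M)[-p]$ for some degreewise finitely generated module $M$ in degree $p$.

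\emph{Main step.} Set $X_{a,j}:=A/(I_A^a+I_A^jK)$ for $a,j\ge0$. Since $I_A^a+I_A^jK\subseteq I_A$ and contains $I_A^a$, each $X_{a,j}$ is a quotient of $A/I_A^a$ with $\H_0X_{a,j}=\H_0A$ and nilpotent augmentation ideal, so $X_{a,j}\in dg_+\cN_R^{\flat}$. One has $X_{a,0}=B/I_B^a$, and $X_{a,j}=A/I_A^a$ whenever $j\ge a-1$ (as $K\subseteq I_A$ forces $I_A^jK\subseteq I_A^{j+1}\subseteq I_A^a$). Because $K=\cone(M)[-p]$, each $I_A^jK$ is again a cone, hence acyclic, and $I_A^jK/I_A^{j+1}K$ is a cone of a module killed by $I_A$, hence acyclic too. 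By Artin--Rees there is $c\ge0$ with $I_A^a\cap K\subseteq I_A^{a-c}K$ for $a\ge c$, whence $I_A^a\cap I_A^jK\subseteq I_A^{a-c}K\subseteq I_A^{j+1}K$ once $a\ge j+c+1$; the modular law then gives that the kernel of $X_{a,j+1}\to X_{a,j}$ is $I_A^jK/I_A^{j+1}K$, so $X_{a,j+1}\to X_{a,j}$ is a tiny acyclic extension for $a\ge j+c+1$. Now $\{X_{m,\,m-c-1}\}_{m\ge c+1}$ is pro-isomorphic to $\{A/I_A^n\}_n$, via the quotient maps $A/I_A^m\to X_{m,m-c-1}$ and $X_{m,m-c-1}\to A/I_A^{m-c}$, which are mutually inverse in $\pro$; and for each $m$ the composite $X_{m,m-c-1}\to X_{m,m-c-2}\to\cdots\to X_{m,0}=B/I_B^m$ runs through steps $X_{m,j+1}\to X_{m,j}$ with $0\le j\le m-c-2$, all satisfying $m\ge j+c+1$, hence is a composite of tiny acyclic extensions and so a surjective weak equivalence in $dg_+\cN_R$. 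Assembling these over $m$ gives a pro-object of the arrow category whose values are surjective weak equivalences, and the evident commuting squares with top row $X_{m,m-c-1}\to B/I_B^m$ and bottom row $A/I_A^m\to B/I_B^m$ identify it, via the pro-isomorphism above, with $\{A/I_A^n\to B/I_B^n\}$.

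\emph{Main obstacle.} The only genuine difficulty is the pro-categorical bookkeeping forced by the Artin--Rees constant $c$: the naive chain $A/I_A^m=X_{m,m-1}\to\cdots\to X_{m,0}$ has a ``boundary layer'' of width $c$ near the top along which the steps need not be weak equivalences. Passing instead to the pro-isomorphic tower $\{X_{m,\,m-c-1}\}_m$ is exactly what removes this layer, and one must check that this reindexing is compatible with the maps to $\{B/I_B^n\}$ — which it is, because every map in sight is a quotient of $A$ by nested dg ideals.
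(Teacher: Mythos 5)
Your argument is correct and reaches the stated conclusion, but it takes a genuinely different route from the paper's, so a comparison is worthwhile. Both proofs begin with the same reduction: the good truncation filtration of $K=\ker f$ reduces to the case where $K$ is concentrated in degrees $p,p+1$ with $d\co K_{p+1}\to K_p$ an isomorphism, and both ultimately invoke Artin--Rees. From there, the paper constructs a single modified pro-system $\{A/I(n)'\}$, where $I(n)'$ is the dg ideal generated by $I_A^n$ and $K_{p+1}\cap d^{-1}(I_A^n)$; it shows directly that $A/I(n)'\to B/I_B^n$ is a weak equivalence by identifying its kernel $K/(K\cap I(n)')$ and checking that $d$ restricts to an isomorphism between its two graded pieces, and then shows $\{A/I_A^n\}\cong\{A/I(n)'\}$ in pro via Artin--Rees applied to the $A_0$-modules $(I_A^p)_p$ and $K_p$ in a single degree. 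You instead interpolate with the two-parameter family $X_{a,j}=A/(I_A^a+I_A^jK)$ and re-index along the diagonal $j=a-c-1$, exhibiting each $X_{m,m-c-1}\to B/I_B^m$ as a finite composite of tiny acyclic extensions. Your route is arguably more conceptual --- recovering the weak equivalence from a tower of tiny acyclic extensions dovetails with Lemma~\ref{wtiny} and the role of tiny extensions in \S\ref{prerepsn} --- at the cost of more pro-categorical bookkeeping. Two points deserve to be spelled out. First, the step ``$I_A^jK/I_A^{j+1}K$ is a cone, hence acyclic'' compresses the substantive content: surjectivity of $\bar d$ on this two-term complex is immediate, but injectivity uses that $d^{-1}\co K_p\to K_{p+1}$ is $A_0$-linear (because $d$ vanishes on $A_0$), so that $dx\in(I_A)_0^{j+1}K_p$ forces $x=d^{-1}(dx)\in(I_A)_0^{j+1}K_{p+1}\subseteq(I_A^{j+1}K)_{p+1}$. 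Second, your Artin--Rees statement $I_A^a\cap K\subseteq I_A^{a-c}K$ is phrased for the dg ideal $I_A$ acting on all of $A$; since $K$ sits in only two degrees it really amounts to two applications of classical Artin--Rees over the Noetherian ring $A_0$ via the identity $(I_A^a)_p=(I_A)_0^{a-p}(I_A^p)_p$ for $a\geq p$ --- exactly the degree-specific form the paper uses --- and making this reduction explicit would be worthwhile.
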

\begin{proof}
With reasoning as at the end of Lemma \ref{wtiny}, it suffices to prove this for $\widehat{FGdg_+\Alg_R}^{\flat}$.
The first observation to make is that if $f$ and $g$ are composable morphisms satisfying the conclusions of this lemma, then $fg$ also satisfies the conclusions. 
Let $K= \ker(f)$; since $A$ is concentrated in degrees $[0,d]$ for some $d$, we get a factorisation of $f$ into acyclic surjections
\[
A= A/(\tau_{\ge d}K) \to A/(\tau_{\ge (d-1)}K)\to \ldots \to A/(\tau_{\ge 0}K)=B,
\]
and therefore reduce to the case where $K$ is concentrated in degrees $r,r+1$.

Set $I:= I_A$ and $J:= I_B$;
we now define a dg ideal $I(n)' \lhd A$ to be generated by $I^n$ and $K_{r+1}\cap d^{-1}(I^n) $, and set $A(n)':= A/ I(n)'$. There is a surjection $A(n)' \to B/J^n$, with kernel $K/ (K\cap I(n)')$. This is given by
\[
(K/ K\cap I(n)')_i= \left\{ \begin{matrix} K_r/(K \cap I^n)_r & i=r\\  K_{r+1}/ (K_{r+1}\cap d^{-1}(I^n)) & i = r+1 \\ 0 & i \ne r, r+1. \end{matrix} \right.
\]
Since $d:K_{r+1} \to K_r$ is an isomorphism, so is $d: K_{r+1} \cap d^{-1} I^n \to (K\cap I^n)_r$, which  means that $\H_*(K/ K\cap I(n)' )=0$, so $A(n') \to B/J^n$ is a
 weak equivalence.

Thus it only remains to show that the pro-objects $\{A/ I^n\}_n$ and $\{A/ I(n)'\}_n$ are isomorphic. Since $I^n \subset I(n)'$, there is an obvious morphism $A/ I^n \to A/ I(n)'$, and it remains to construct an inverse in the pro-category,  Observe that $A_0$ is a Noetherian ring, and that $(I^r)_r$ and $K_r$ are finitely generated $A_0$-modules.

Now,  $(K\cap I^n)_r= K_r \cap I_0^{n-r} (I^r)_r$ for all $n \ge r$. By the Artin--Rees Lemma (\cite{Mat} Theorem 8.5),  there exists some $c\ge r$ such that for all $n \ge c$, this is 
\[
I_0^{n-c}(K_r \cap I_0^{c-r} (I^r)_r)= I_0^{n-c}(K_r \cap (I^c)_r).
\] 
Thus $K_{r+1} \cap d^{-1}(I^n)$  is just $ I_0^{n-c}K_{r+1} \cap d^{-1}(I^c)$. Therefore $I(n)' \subset I^{n-c}$, so  giving maps $A/ I(n)' \to A/I^{n-c}$, and hence the required inverse in the pro-category.
\end{proof}

\subsection{A nilpotent representability theorem}

 Let $d\cN_R^{\flat}$ (or simply $d\cN^{\flat}$) be either of the categories $s\cN_R^{\flat}$ or $dg_+\cN_R^{\flat}$.

\begin{remark}
Note that the constructions of \S \ref{tgtsn} carry over to the categories $d\cN_R^{\flat}$, since they are closed under fibre products.
\end{remark}

\begin{lemma}\label{properw}
Given a weak equivalence $f:A \to B$ between fibrant objects in a right proper model category $\C$, there exists a diagram
$$
\xymatrix{ & & B\\
A \ar[r]^i & C \ar[ur]^{g_1}\ar[dr]_{g_0}\\
& & A},
$$
such that $g_0, g_1$ are trivial fibrations, $g_1 \circ i =f$ and $g_0 \circ i = \id$.
 \end{lemma}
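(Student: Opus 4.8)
The plan is to realise $C$ as a strict fibre product built from a path object for $B$, so that the two structure maps to $A$ and $B$ both appear as base changes of (trivial) fibrations. First I would fix a path object for $B$: a factorisation $B\xrightarrow{\,s\,}B^I\xrightarrow{(\partial_0,\partial_1)}B\times B$ of the diagonal, with $s$ a weak equivalence and $(\partial_0,\partial_1)$ a fibration. Since $B$ is fibrant, both projections $B\times B\to B$ are fibrations, hence so are $\partial_0$ and $\partial_1$; and since $\partial_i\circ s=\id_B$, the two-out-of-three axiom makes each $\partial_i$ a weak equivalence, so each $\partial_i$ is a trivial fibration.

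Next I would set $C:=A\times_{f,\,B,\,\partial_0}B^I$, with $g_0:=\mathrm{pr}_A\colon C\to A$ and $g_1:=\partial_1\circ\mathrm{pr}_{B^I}\colon C\to B$, and $i:=(\id_A,\,s\circ f)\colon A\to C$, which is well defined because $\partial_0\circ s\circ f=f$. The identities $g_0\circ i=\id_A$ and $g_1\circ i=\partial_1\circ s\circ f=f$ then hold on the nose. The map $g_0$ is the base change of the trivial fibration $\partial_0$ along $f$, hence a trivial fibration. For $g_1$, the point is that $C$ is canonically isomorphic to $(A\times B)\times_{f\times\id_B,\,B\times B,\,(\partial_0,\partial_1)}B^I$ via $(a,\omega)\mapsto\bigl((a,\partial_1\omega),\omega\bigr)$; under this isomorphism $(g_0,g_1)\colon C\to A\times B$ is the base change of the fibration $(\partial_0,\partial_1)$, hence a fibration, and composing it with $A\times B\to B$ — a fibration because $A$ is fibrant — shows that $g_1$ is a fibration. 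Finally $g_1$ is a weak equivalence: $g_0$ is a weak equivalence with section $i$, so two-out-of-three forces $i$ to be a weak equivalence, and then $g_1\circ i=f$ forces $g_1$ to be one too. Thus $g_1$ is a trivial fibration, completing the construction.

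The one delicate step is making $g_1$ a genuine fibration rather than merely a weak equivalence sitting over $f$ — a direct factorisation of $f$ produces a map with the wrong variance — and the $(A\times B)$-pullback description of $C$ is precisely what repairs this. Right-properness can be invoked as an alternative: the projection $C\to B^I$ is the base change of the weak equivalence $f$ along the fibration $\partial_0$, hence a weak equivalence, and composing with the trivial fibration $\partial_1$ again exhibits $g_1$ as a trivial fibration; but the two-out-of-three argument above makes even that unnecessary.
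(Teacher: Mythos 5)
Your construction is the same as the paper's: the mapping-path-space $C=A\times_{f,B,\partial_0}B^I$, with $g_0$ the projection to $A$ and $g_1=\partial_1\circ\mathrm{pr}_{B^I}$, and the fibration claim for $g_1$ obtained by recognizing $(g_0,g_1)\colon C\to A\times B$ as a base change of $B^I\to B\times B$ and using fibrancy of $A$. The one real divergence is in how $g_1$ is shown to be a weak equivalence: the paper invokes right properness to deduce that $C\to B^I$ is a weak equivalence and then composes with $\partial_1$, whereas you bypass properness entirely by noting $g_0$ is a trivial fibration (a pullback of $\partial_0$ along $f$, no properness needed), so $i$ is a weak equivalence by two-out-of-three, and then $g_1\circ i=f$ forces $g_1$ to be one as well. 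Your observation is correct and actually shows the right-properness hypothesis in the statement is superfluous for this lemma — everything follows from fibrancy of $A$ and $B$ alone; you even flag the paper's right-properness route as the ``alternative.'' The only point you should make explicit, which you do, is that $\partial_0,\partial_1$ are trivial fibrations because $B$ is fibrant and $s$ is a weak equivalence — the paper leaves this implicit.
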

\begin{proof}
Let $C:= A\by_{f, B, \ev_0}B^I$, for $B^I$ the path object of $B$, and let $g_0$ be given by projection onto $A$. The projection $C \to B^I$ is the pullback of $A \to B$ along the fibration $B^I \to B$, so is a weak equivalence by right properness. Define $g_1$ to be the composition of this with the trivial fibration $\ev_1:B^I \to B$. The projection $g_0$ is the pullback of the trivial fibration $\ev_0:B^I \to B$ along $f$, so is a trivial fibration.  

It only remains  to show that $g_1$ is a fibration. Since $B^I \to B\by B$ is a fibration, pulling back along $f$ shows that $(g_0,g_1):C \to A \by B$ is a fibration, and since $A$ is fibrant, we deduce that $A\by B \to B$ is a fibration, so $g_1$ must be a fibration.
\end{proof}

\begin{lemma}\label{cohofp2}
If a homotopy-preserving functor $F: d\cN_R^{\flat} \to \bS$  is homotopy-homogeneous, then it is almost of finite presentation if and only if the following hold:
\begin{enumerate}
\item the functor $\pi^0F: \Alg_{\pi_0R} \to \bS$  preserves filtered colimits;
 
\item for all finitely generated $A \in \Alg_{\pi_0R}$  and all $x \in F(A)_0$, the functors $\DD^i_x(F, -): \Mod_A \to \Ab$ preserve filtered colimits for all $i>0$.
\end{enumerate}
\end{lemma}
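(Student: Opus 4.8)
The plan is to mirror the proof of Theorem~\ref{cohofp} essentially verbatim, replacing $s\Alg_R$ by $d\cN_R^{\flat}$ and inducting on the Postnikov degree $k$; the extra care required is only in checking that none of the constructions used leave the subcategory $d\cN_R^{\flat}$. The ``only if'' direction should be immediate: $\pi^0F$ is the restriction of $F$ to the discrete objects of $d\cN_R^{\flat}$, and each $\DD^i_x(F,-)$ is obtained (via Definitions~\ref{Tdef} and \ref{totcohodef}) by applying $F$ to the bounded algebras $A\oplus M[-n]$, which vary through filtered colimits of objects of $d\cN_R^{\flat}$ as $M$ runs over a filtered colimit of modules; so almost finite presentation of $F$ forces (1) and (2).

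For the converse I would first record the relevant closure properties. Since $d\cN_R^{\flat}$ is closed under fibre products, the tangent functors, homotopy-homogeneity and Proposition~\ref{obs} are all available here (cf.\ the remark preceding Lemma~\ref{properw}). The base case $k=0$ is exactly condition (1), because the discrete objects of $d\cN_R^{\flat}$ are precisely $\Alg_{\pi_0R}$ (for discrete $A$ the kernel of $A\to\pi_0A$ vanishes). For the inductive step one needs that $P_{k-1}A\in d\cN_R^{\flat}$ whenever $A$ is: the canonical map $A\to P_{k-1}A$ (resp.\ the good truncation $A\to\tau_{\le k-1}A$ in the dg case) is a surjection, so $\ker(P_{k-1}A\to\pi_0A)$ is a quotient of the nilpotent ideal $I_A$ and hence nilpotent, while boundedness is clear; similarly $\pi_0A\oplus(\pi_kA)[-k-1]\in d\cN_R^{\flat}$, its augmentation ideal being square-zero and concentrated in a single degree. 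Since Postnikov truncation and $\pi_k$ commute with filtered colimits, these operations are compatible with the colimits in play.

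With this in hand, the induction proceeds as in Theorem~\ref{cohofp}: given a filtered system $\{A_\alpha\}$ in $\tau_{\le k}(d\cN_R^{\flat})$ with colimit $A$, set $B_\alpha=P_{k-1}A_\alpha$, $M_\alpha=\pi_kA_\alpha$, present $A_\alpha$ as the homotopy square-zero extension $B_\alpha\by^h_{\id+\delta,\pi_0A_\alpha\oplus M_\alpha[-k-1],\id}\pi_0A_\alpha$ (using the dg analogue of \cite{hag2} Lemma~2.2.1.1 in the chain case), and apply homotopy-homogeneity to obtain
$$
F(A_\alpha)\simeq F(B_\alpha)\by^h_{F(\pi_0A_\alpha\oplus M_\alpha[-k-1])}F(\pi_0A_\alpha),
$$
and likewise for $A$. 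The inductive hypothesis gives $\varinjlim F(B_\alpha)\simeq F(B)$ and condition (1) gives $\varinjlim F(\pi_0A_\alpha)\simeq F(\pi_0A)$, so, working fibrewise over a point of $F(B)$ lifted to some $F(B_\beta)$, Lemma~\ref{tantrans} reduces matters to showing that $\varinjlim\DD^{j}_{\tilde x}(F,M_\alpha)\to\DD^{j}_{\tilde x}(F,M)$ is an isomorphism, i.e.\ that each $\DD^j_{\tilde x}(F,-)$ commutes with filtered colimits of $\pi_0A_\beta$-modules. Writing $\pi_0A_\beta$ as a filtered colimit of finitely generated $\pi_0R$-algebras and using (1) to realise $[\tilde x]$ at a finite stage, this follows from condition (2) for $j>0$ and from Lemma~\ref{adf} (given (1)) for $j\le 0$.

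The main obstacle, and the one point that is more than transcription, is that $d\cN_R^{\flat}$ is only a full subcategory of $s\Alg_R$ (resp.\ $dg_+\Alg_R$) closed under finite limits, not a model category; so the homotopy-theoretic steps above --- rectifying the comparison maps $\varinjlim B_\alpha\to B$ as fibrations, lifting points through weak equivalences, forming homotopy fibres --- must all be performed inside the subcategory. This is precisely the purpose of Lemma~\ref{properw}: a weak equivalence between objects of $d\cN_R^{\flat}$ (all fibrant in $s\Alg_R$) can be replaced by a span of trivial fibrations admitting a section, with the path object $B^I$ and the relevant fibre products still lying in $d\cN_R^{\flat}$. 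I expect the real work to lie in this bookkeeping; the homological input is unchanged from Theorem~\ref{cohofp}.
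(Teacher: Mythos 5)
Your proposal is correct and follows the same overall strategy as the paper (mirror the proof of Theorem~\ref{cohofp} while checking that every construction used remains inside $d\cN_R^{\flat}$), but it singles out a slightly different key verification than the paper does. The paper's own proof is a one-liner: it says the only point to check is that a square-zero extension $A\to B$ in $d\cN_R^{\flat}$ can still be written as a fibre product $A\cong B\by_{B\oplus M}\tilde B$ with $\tilde B\to B$ a weak equivalence (i.e.\ the construction in the proof of Lemma~\ref{luriesmall} can be done with bounded, $I$-adically nilpotent choices, e.g.\ taking $N=\cone(I)$ rather than an arbitrary acyclic resolution). This is precisely what makes Remark~\ref{obskey} and hence the homotopy-homogeneity step in Theorem~\ref{cohofp} available in $d\cN_R^{\flat}$. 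You instead identify Lemma~\ref{properw} as the main bookkeeping tool, which works because $d\cN_R^{\flat}$ is closed under path objects and fibre products, but is not what the paper invokes and addresses a slightly different rectification issue (strictifying a generic zigzag) than the one the paper flags (realising a homotopy square-zero extension as an honest pullback). Your subsidiary observations — that $P_{k-1}A$ and $\pi_0A\oplus(\pi_kA)[-k-1]$ stay in $d\cN_R^{\flat}$, that the ``only if'' direction is formal, that the colimit/module bookkeeping via Lemma~\ref{tantrans} goes through unchanged — are all correct and supply detail the paper omits. The only caveat is that ``boundedness is clear'' for $P_{k-1}A$ is immediate in the dg case (good truncation) but does require a small check for the simplicial Moore--Postnikov truncation; this is true but not quite trivial.
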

\begin{proof}
This is essentially the same as Theorem \ref{cohofp} --- we need only show that any square-zero extension $A \to B$ in $ s\cN_R^{\flat}$ (resp. $ dg_+\cN_R^{\flat} $) is of the form $A = B\by_{B \oplus M}\tilde{B}$, for $\tilde{B} \to B$ a weak equivalence, and some derivation $B \to M$. Now just note that such an expression is constructed in the proof of Lemma \ref{luriesmall}.
\end{proof}

\begin{theorem}\label{lurierep3}
Let $R$ be a derived G-ring admitting a dualising module (in the sense of \cite{lurie} Definition 3.6.1) and   take a functor $F: d\cN_R^{\flat} \to \bS$. Then $F$ is the restriction of an almost finitely presented  geometric derived $n$-stack $F':d\Alg_R \to \bS$ if and only if 
 the following conditions hold

\begin{enumerate}
 
\item $F$ maps tiny acyclic extensions to weak equivalences.

\item For all discrete rings $A$, $F(A)$ is $n$-truncated, i.e. $\pi_iF(A)=0$ for all $i>n$ .

\item
$F$ is homotopy-homogeneous, i.e. for all square-zero extensions $A \onto C$ and all maps $B \to C$, the map
$$
F(A\by_CB) \to F(A)\by_{F(C)}^hF(B)
$$
is an equivalence.

\item $\pi^0F:\Alg_{\pi_0R} \to \bS$ is a hypersheaf for the \'etale topology. 

\item\label{afp1a} $\pi_0\pi^0F:  \Alg_{\pi_0R} \to \Set$  preserves filtered colimits.

\item\label{afp1b} For all $A \in \Alg_{\pi_0R}$ and all $x \in F(A)$, the functors $\pi_i(\pi^0F,x): \Alg_A \to \Set$  preserve filtered colimits for all $i>0$.

\item 
for all finitely generated integral domains $A \in \Alg_{\pi_0R}$, all $x \in F(A)_0$ and all \'etale morphisms $f:A \to A'$, the maps
\[
\DD_x^*(F, A)\ten_AA' \to \DD_{fx}^*(F, A')
\]
are isomorphisms.

\item\label{afp2} for all finitely generated $A \in \Alg_{\pi_0R}$  and all $x \in F(A)_0$, the functors $\DD^i_x(F, -): \Mod_A \to \Ab$ preserve filtered colimits for all $i>0$.

\item for all finitely generated integral domains $A \in \Alg_{\pi_0R}$  and all $x \in F(A)_0$, the groups $\DD^i_x(F, A)$ are all finitely generated $A$-modules.

\item for all complete discrete local Noetherian  $\pi_0R$-algebras $A$, with maximal ideal $\m$, the map
$$
\pi^0F(A) \to {\Lim}^h F(A/\m^r)
$$
is a weak equivalence (see Remark \ref{formalexistrk} for a reformulation).
\end{enumerate}
Moreover, $F'$ is uniquely determined by $F$ (up to weak equivalence).
\end{theorem}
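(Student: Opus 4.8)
The plan is to prove necessity and sufficiency separately, and then to read off uniqueness from the construction used in the sufficiency direction.

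\emph{Necessity.} Suppose $F=F'|_{d\cN_R^{\flat}}$ for some almost finitely presented geometric derived $n$-stack $F'$. Condition~(1) is immediate: $F'$ is homotopy-preserving and tiny acyclic extensions (Definition~\ref{littledef}) are in particular weak equivalences. For the rest, the point is that every family of test algebras occurring in Corollary~\ref{lurierep2} already lies in $d\cN_R^{\flat}$: discrete rings; square-zero extensions $A\onto C$ with $A,C\in d\cN_R^{\flat}$ together with maps $B\to C$ in $d\cN_R^{\flat}$, for which $A\by_CB\in d\cN_R^{\flat}$ by closure under fibre products; trivial extensions $A\oplus M[-n]$ with $A$ discrete and $M$ a discrete module (bounded, square-zero kernel); finitely generated discrete domains; \'etale covers of discrete rings; and the Artinian quotients $A/\m^r$ of complete discrete local Noetherian rings. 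Since $\pi^0F=\pi^0F'$ and the tangent spaces of $F$ and $F'$ agree on all the modules in play, conditions~(2)--(10) are exactly the restrictions to $d\cN_R^{\flat}$ of the conditions of Corollary~\ref{lurierep2} — with the nilcompleteness condition there becoming vacuous, since every object of $d\cN_R^{\flat}$ is bounded and hence equals its own $k$th Postnikov stage for $k\gg0$.

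\emph{Sufficiency: constructing $F'$.} Assume (1)--(10). First, Lemma~\ref{wtiny} together with condition~(1) shows that $F$ carries every surjective weak equivalence in $d\cN_R^{\flat}$ to a weak equivalence; since all objects of $d\cN_R^{\flat}$ are fibrant, the factorisation of Lemma~\ref{properw} (Ken Brown's lemma) then gives that $F$ is homotopy-preserving on $d\cN_R^{\flat}$. I would next extend $F$ in stages. On $d\cN_R$, set $F'(A):=\holim_kF(P_kA)$, using that the Postnikov stages of a nilpotent algebra are weakly equivalent to objects of $d\cN_R^{\flat}$. On $\widehat{FGdg_+\Alg_R}^{\flat}$ and $\widehat{FGs\Alg_R}^{\flat}$, set $F'(\hat A):=\holim_nF(A/I_A^n)$, each $A/I_A^n$ lying in $d\cN_R^{\flat}$; here Lemma~\ref{wsmall} is precisely what makes this homotopy-invariant, since a surjective weak equivalence $\hat A\to\hat B$ induces a morphism of pro-systems which is an inverse limit of surjective weak equivalences in $d\cN_R$. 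One then extends further to all of $\widehat{FGdg_+\Alg_R}$ (resp. $\widehat{FGs\Alg_R}$) using homotopy-homogeneity to split off the unbounded part through square-zero pieces, together with nilcompleteness along the Postnikov tower; then to the associated ind-categories by the filtered homotopy colimit $F'(\{A_\alpha\}):=\LLim^h_\alpha F'(\hat A_\alpha)$; and finally to $dg_+\Alg_R$ (resp. $s\Alg_R$) by transport along the right Quillen equivalence $U$ of Proposition~\ref{cNhat}, viewed via Lemma~\ref{hatful} as an equivalence onto a full subcategory. One then verifies $F'|_{d\cN_R^{\flat}}\simeq F$; in particular $\pi^0F'=\pi^0F$.

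\emph{Sufficiency: $F'$ is representable, and uniqueness.} It remains to check the hypotheses of Corollary~\ref{lurierep2} for $F'$. Homotopy-preservation, $n$-truncatedness on discrete rings, and homotopy-homogeneity for arbitrary square-zero extensions all follow from conditions~(2)--(3) by the d\'evissage above (an arbitrary square-zero extension is reduced, via Postnikov homotopy limits and filtered colimits, to bounded finitely generated data living in $d\cN_R^{\flat}$); nilcompleteness and the filtered-colimit conditions hold by construction, so $F'$ is almost of finite presentation by Lemma~\ref{cohofp2}; the $\DD^\ast$ of $F'$ at discrete points coincide with those of $F$, giving conditions~(7)--(9) and the formal-existence condition~(10); and $\pi^0F'=\pi^0F$ being a hypersheaf, combined with \'etale base change for $\DD^\ast$, lets Proposition~\ref{sheafresult} conclude that $F'$ is a hypersheaf. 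Thus Corollary~\ref{lurierep2} applies. For uniqueness, observe that any almost finitely presented geometric derived $n$-stack is itself homotopy-homogeneous, nilcomplete, satisfies the formal-existence condition, and is almost of finite presentation — exactly the properties driving the construction above — so it is recovered up to weak equivalence from its restriction to $d\cN_R^{\flat}$; hence two such extensions of a given $F$ are weakly equivalent. (Alternatively one may produce a comparison morphism and invoke Proposition~\ref{detectweak}, since $\pi^0$ and the tangent cohomology at finitely generated discrete domains are all computed inside $d\cN_R^{\flat}$.)

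\emph{Main obstacle.} The technical heart is the completion step of the construction — extending $F$ over the adic completions $\hat A$ of finitely generated algebras — and the verification that the resulting $F'$ restricts back to $F$ on $d\cN_R^{\flat}$. This is exactly where the boundedness hypothesis is indispensable and where Lemmas~\ref{wtiny} and~\ref{wsmall} do the real work, ensuring that the homotopy limits defining $F'$ on complete (and on unbounded) algebras are genuinely homotopy-invariant rather than artefacts of a choice of presentation; once this is in place, the remaining checks of Corollary~\ref{lurierep2} for $F'$ are a routine matter of pushing the $d\cN_R^{\flat}$-conditions through the d\'evissage.
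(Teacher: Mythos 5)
Your proposal uses the same ingredients and broad strategy as the paper (Lemmas~\ref{wtiny} and~\ref{wsmall} for homotopy-invariance, the model structure of Proposition~\ref{cNhat} with the full subcategory of Lemma~\ref{hatful}, reduction to Corollary~\ref{lurierep2}, and the paper's uniqueness argument), but the construction of $F'$ is organised differently, and this creates a genuine gap. The paper defines $F'$ by a single formula with the Postnikov limit on the outside,
\[
F'(A) := {\Lim_k}^h\,\LLim_{\alpha}\,\hat{F}\bigl(\{P_kA_{\alpha}/I_{A_{\alpha}}^n\}_{n}\bigr),
\]
so nilcompleteness of $F'$ is read off immediately, while almost-finite-presentation must then be argued separately via the tangent-cohomology criterion. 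Your staged construction does the ind-extension (the filtered colimit over $\alpha$) as the last step before transport along $U$, which puts the colimit outermost: $F'(A)\simeq \LLim_{\alpha}{\Lim_k}^h{\Lim_n}^h F(\cdots)$. Filtered homotopy colimits of spaces do not commute with homotopy limits of towers, so nilcompleteness of your $F'$ does not ``hold by construction'' as you claim; it would need a separate argument, and without the explicit formula it is not clear one can be supplied. One cannot have both nilcompleteness and the filtered-colimit conditions ``by construction'' from a single ordering of the operations.

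Two secondary issues in the same part of the argument: the extension from $\widehat{FGdg_+\Alg_R}^{\flat}$ to $\widehat{FGdg_+\Alg_R}$ ``using homotopy-homogeneity to split off the unbounded part through square-zero pieces, together with nilcompleteness along the Postnikov tower'' is too vague to check, and is in any case circular (you are still building nilcompleteness); the paper sidesteps this entirely by only ever applying $F$ to the quotients $P_kA_{\alpha}/I_{A_{\alpha}}^n$, which lie in $d\cN_R^{\flat}$. Also, your formula $F'(A):=\holim_kF(P_kA)$ on $d\cN_R$ presupposes a \emph{functorial} replacement of $P_kA$ by an object of $d\cN_R^{\flat}$, which you do not produce. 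Your necessity direction, your identification of Lemma~\ref{wsmall} as what makes the adic limit homotopy-invariant, and your uniqueness argument (either route) are all sound and match the paper's intent.
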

\begin{proof}
We will deal with the simplicial case. Since normalisation gives an equivalence $N: s\cN_R^{\flat} \to dg_+\cN_R^{\flat}$ when $R$ is a $\Q$-algebra, the dg case is entirely similar.

First observe that $F$ extends to a functor $\hat{F}:\pro(s\cN_R^{\flat}) \to \bS $, given by $\hat{F}(\{A^{(i)}\}_{i \in I})= \Lim^h_{i \in I} F(A^{(i)})$.  

Define $F'$ as follows. For any $A \in s\Alg_R$, write $A = \LLim A_{\alpha}$, for $A_{\alpha} \in FGs\Alg_R$, and set
\[
F'(A) := {\Lim_k}^h \LLim_{\alpha} \hat{F}( \{P_kA_{\alpha}/I_{A_{\alpha}}^n\}_{n \in \N}).
\]

We first show that $F'$ is homotopy-preserving; it follows from Lemma \ref{wtiny} and the proof of Proposition \ref{cNhat}  that $F$ is homotopy-preserving.
Note that the formula for  $F'$  defines a functor $F''$ on $\ind(\widehat{FGs\Alg_R})$, and that $F'$ is the composition of $F''$ with the derived left Quillen functor of Proposition \ref{cNhat}.  
By the proof of Lemma \ref{properw}, it suffices to show that $F''$ maps trivial fibrations to weak equivalences. Any such morphism is isomorphic to one of the form $\{A_{\alpha}\}_{\alpha}\to \{B_{\alpha}\}_{\alpha}$, where each $A_{\alpha} \to B_{\alpha}$ is a surjective weak equivalence in $\widehat{FGs\Alg_R}$. Note that $P_kA_{\alpha} \to P_kB_{\alpha}$ is also a surjective weak equivalence, so we may apply Lemma \ref{wsmall}, which implies that
\[
\hat{F}( \{P_kA_{\alpha}/I_{A_{\alpha}}^n\}_{n \in \N})\to \hat{F}( \{P_kB_{\alpha}/I_{B_{\alpha}}^n\}_{n \in \N})
\] 
is a weak equivalence, since $F$ is homotopy-preserving. Thus $F''$ (and hence $F'$) is homotopy-preserving.

If $A \in s\cN_R^{\flat}$, note that
\[
F'(A) = \LLim_{\alpha} F(A_{\alpha}) \simeq F(A),
\]
by nilpotence and almost finite presentation, respectively, noting that as in the proof of Theorem \ref{lurierep}, conditions (\ref{afp1a}), (\ref{afp1b}) and (\ref{afp2}) ensure almost finite presentation of $F$. Thus $F\simeq  F'|_{s\cN_R^{\flat}}$; in particular, this ensures that $\DD^i_x((F'), M)\cong \DD^i_x(F, M) $.

Since $P_kA= \LLim  P_kA_{\alpha}$ (for $A_{\alpha}$ as above), it follows immediately that $F$ is nilcomplete. Likewise, $\pi^0F$ automatically preserves filtered  colimits, as do  the functors $\DD^i_x(F, -):\Mod_A \to \Ab$. Therefore $F'$ satisfies the conditions of Corollary \ref{lurierep2}.

Finally, it remains to show that $F'$ is uniquely determined by $F$. Assume that we have some geometric derived stack $G:  s\Alg_R \to \bS $, almost of finite presentation, with $G|_{s\cN_R^{\flat}}\simeq F$. Then, since $G$ is nilcomplete and almost of finite presentation, we must have
\begin{eqnarray*}
G(A)  &\simeq& {\Lim_k}^h G(P_kA)\\
&\simeq& {\Lim_k}^h  \LLim_{\alpha} G( P_kA_{\alpha})\\
& \simeq& {\Lim_k}^h  \LLim_{\alpha} G( P_k\hat{A}_{\alpha}),
\end{eqnarray*}
where we write $A= \LLim_{\alpha} A_{\alpha}$ as a filtered colimit of finitely generated subalgebras, and the final isomorphism comes from the weak equivalence $A_{\alpha}\to \hat{A}_{\alpha}$ of   \cite{stacks2} Theorem \ref{stacks-fthm}.

Now,  if we take  an inverse system $\{B_i\}_i$ in $s\Alg$ in which the morphisms $B_i \to B_j$ induce isomorphisms $ \pi_0B_i \to\pi_0B_j$, then  $G(\Lim^h B_i) \simeq \Lim^hG(B_i)$ (as $G$ is a   geometric derived stack, so has an atlas as in \cite{stacks2} Theorem \ref{stacks-relstrict}). In particular,
\begin{eqnarray*}
G( P_k\hat{A}_{\alpha})&=& G(\Lim_n  P_k\hat{A}_{\alpha}/(I_{A_{\alpha}}^n))\\
&\simeq& {\Lim_n}^hG(P_k\hat{A}_{\alpha}/(I_{A_{\alpha}}^n))\\
&=& {\Lim_n}^hF(P_k\hat{A}_{\alpha}/(I_{A_{\alpha}}^n))\\
&=& \hat{F}(P_k\hat{A}_{\alpha}).
\end{eqnarray*}
Thus
\[
G(A) \simeq {\Lim_k}^h  \LLim_{\alpha}\hat{F}(P_k\hat{A}_{\alpha}),
\]
as required.
 \end{proof}

\begin{remark}
Note that if we replace $d\cN_R^{\flat}$ with $s\cN_R$ or  $dg_+\cN_R$, then the theorem remains true, provided we impose the additional condition that $F$ be nilcomplete, in the sense that for all $A$, the map $F(A) \to \Lim^h_k F(P_kA)$ is a weak equivalence. 
\end{remark}

\subsection{Covers}

We end this section with a criterion which allows us to verify the key representability properties on formally \'etale covers.

\begin{definition}
 A transformation    $\alpha:F \to G$  of functors $F, G :  d\cN^{\flat} \to \bS$ is said to be  homotopy formally \'etale if for all square-zero extensions $A \to B$, the map
\[
F(A) \to F(B)\by^h_{G(B)}G(A)        
\]
  is an equivalence.  
\end{definition}

\begin{proposition}\label{hfettransfer}
Let $\alpha:F \to G$ be a homotopy formally \'etale morphism of functors $F, G :  d\cN^{\flat} \to \bS$. If $G$ is homotopy-homogeneous (resp. homotopy-preserving), then so is $F$. Conversely, if  $\alpha$ is surjective (in the sense that $\pi_0F(A) \onto \pi_0G(A)$ for all $A$) and $F$ is  homotopy-homogeneous (resp. homotopy-preserving), then so is $G$.
\end{proposition}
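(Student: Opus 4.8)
The plan is to deduce everything from two inputs: the fact that homotopy formal \'etaleness turns certain squares into homotopy pullbacks, and the elementary observation that a map of simplicial sets which becomes a weak equivalence after base change along a $\pi_0$-surjection is itself a weak equivalence (reduce to homotopy fibres, which base change preserves, and recall that a map is a weak equivalence exactly when all its homotopy fibres are contractible; the $\pi_0$-surjection guarantees every component of the target is hit). The first step is to note that if $A \onto C$ is a square-zero extension with kernel $I$ and $B \to C$ is an arbitrary map, then $A\by_C B \to B$ is again a square-zero extension: it is surjective, and its kernel is $I$ viewed as a $B$-module, which still satisfies $I^2=0$ (and $A\by_C B \in d\cN_R^{\flat}$ since that category is closed under fibre products). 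Feeding the square-zero extensions $A \onto C$ and $A\by_C B \onto B$ into the definition of homotopy formal \'etaleness and pasting homotopy pullback squares --- using in particular the resulting identifications $F(A)\simeq G(A)\by^h_{G(C)}F(C)$, hence $F(A)\by^h_{F(C)}F(B)\simeq G(A)\by^h_{G(C)}F(B)$, and $F(A\by_C B)\simeq G(A\by_C B)\by^h_{G(B)}F(B)$ --- one checks that the square
\[
\xymatrix{
F(A\by_C B) \ar[r] \ar[d]_{\alpha} & F(A)\by^h_{F(C)}F(B) \ar[d]^{\alpha} \\
G(A\by_C B) \ar[r] & G(A)\by^h_{G(C)}G(B)
}
\]
(horizontal arrows the canonical comparison maps) is a homotopy pullback, and that under these identifications its right vertical arrow is a base change of $\alpha_B\colon F(B)\to G(B)$.

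Granting this, the homogeneity statements are immediate. If $G$ is homotopy-homogeneous the bottom arrow is a weak equivalence, hence so is the top arrow (a base change of it), so $F$ is homotopy-homogeneous. Conversely, if $F$ is homotopy-homogeneous the top arrow is a weak equivalence, and if $\alpha$ is surjective then the right vertical arrow is $\pi_0$-surjective (a base change of the $\pi_0$-surjection $\alpha_B$); by the descent observation above the bottom arrow is then a weak equivalence, so $G$ is homotopy-homogeneous.

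The homotopy-preserving statements follow by the same argument specialised to $B=C$: homotopy formal \'etaleness says precisely that for every square-zero extension $A \to B$ the square $F(A)\to F(B)$, $G(A)\to G(B)$ (vertical arrows $\alpha$) is a homotopy pullback, with top arrow a base change of the bottom along $\alpha_B$. When $A \to B$ is in addition a weak equivalence, homotopy-preservation of $G$ makes the bottom, hence the top, arrow a weak equivalence, while homotopy-preservation of $F$ makes the top, hence --- by the descent observation and surjectivity of $\alpha$ --- the bottom arrow a weak equivalence. By the remark after Definition \ref{littledef} every tiny acyclic extension is square-zero, so this shows $F$ (resp.\ $G$) sends tiny acyclic extensions to weak equivalences; by Lemma \ref{wtiny} every surjective weak equivalence in $d\cN_R^{\flat}$ is a composition of these, and by the factorisation argument of Lemma \ref{properw} employed in the proof of Theorem \ref{lurierep3} this already forces the functor to be homotopy-preserving. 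The routine part is the pasting bookkeeping; the one delicate point is checking that the homotopy pullback square above is realised by the canonical comparison maps and not merely by some abstract equivalence of its two columns --- a diagram chase writing the comparison map as a composite of comparison maps, each of which the pasting lemmas identify as a weak equivalence or as a projection off a homotopy pullback --- while the indispensable use of the surjectivity hypothesis is exactly the descent observation, which is false without it.
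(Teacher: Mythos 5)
Your proposal is correct and follows essentially the same route as the paper's proof. The paper's argument is simply the chain of equivalences
\[
F(A\by_CB)\simeq G(A\by_CB)\by^h_{GC}F(C), \qquad
F(A)\by^h_{F(C)}F(B)\simeq (G(A)\by_{G(C)}^hG(B))\by^h_{G(B)}F(B),
\]
from which it reads off that the comparison map for $F$ is the base change along $\alpha$ of the comparison map for $G$; your homotopy-pullback square, with its explicitly stated ``descent along $\pi_0$-surjections'' lemma, is a more carefully articulated packaging of exactly that observation. Likewise, for homotopy-preservation both you and the paper apply formal \'etaleness directly to a tiny acyclic extension $A\to B$ (noting it is square-zero), then reduce from tiny acyclic extensions to surjective weak equivalences by Lemma \ref{wtiny}, and from surjective weak equivalences to all weak equivalences by the factorisation in Lemma \ref{properw}. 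The one point where you are more explicit than the paper --- checking that the homotopy pullback square is realised by the canonical comparison map, and spelling out the $\pi_0$-surjection descent lemma --- is worth having, but it is the same proof.
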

\begin{proof}
Take a square-zero extension $A \to B$, and a morphism $C \to B$, noting that $A\by_BC \to C$ is then another square-zero extension. 
Since $\alpha$ is homotopy formally \'etale,
\begin{eqnarray*}
F(A\by_BC)&\simeq& G(A\by_BC)\by^h_{GC}FC\\ 
FA\by^h_{FB}FC&\simeq& [G(A)\by^h_{G(B)}F(B)]\by^h_{FB}FC\\
&\simeq& G(A)\by^h_{G(B)}FC\\
&\simeq& (GA\by_{GB}^hGC)\by^h_{GC}FC.
\end{eqnarray*}
Thus homogeneity of $G$ implies homogeneity of $F$, and if $\pi_0FC \to \pi_0GC$ is surjective for all $C$, then homogeneity of $F$ implies homogeneity of $G$.

Now take a tiny acyclic extension $A \to B$ in $ d\cN^{\flat}$. Since $\alpha$ is homotopy formally \'etale,
\[
F(A)\simeq G(A)\by^h_{G(B)}F(B),
\]
so if $G$ is homotopy-preserving, then $F$ maps tiny acyclic extensions to weak equivalences. By Lemma \ref{wtiny} and the proof of Lemma \ref{properw}, this implies that $F$ is homotopy-preserving. If $\pi_0F(B) \to \pi_0G(B)$ is surjective for all $B$, then the converse holds.
\end{proof}

\section{Pre-representability}\label{prerepsn}

\subsection{Simplicial structures}

\begin{definition}\label{salgstr}
Define  simplicial structures (in the sense of \cite{sht} Definition II.2.1) on $s\Alg_R$  and $\ind(\widehat{FGs\Alg_R})$ as follows. For  $A\in s\Alg_R$ and $K \in \bS$,  $A^K$ is defined by 
$$
(A^K)_n:= \Hom_{\bS}(K \by \Delta^n, A).
$$ 
Then for $A  \in \ind(\widehat{FGs\Alg_R})$, $A^K$ is uniquely determined via Lemma \ref{hatful} by 
the property 
that
$U(A^K)= (UA)^K$.

Spaces  $\HHom(A,B) \in \bS$ of morphisms are then given by
$$
\HHom(A, B)_n:= \Hom(A, B^{\Delta^n}).
$$
\end{definition}

We need to check that this is well-defined:
\begin{lemma}
For $A \in \ind(\widehat{FGs\Alg_R})$  and $K \in \bS$, we have $A^K \in \ind(\widehat{FGs\Alg_R}) $. Moreover, if  
$A \to \pi_0A$ is a nilpotent extension, then so is $A^K \to \pi_0(A^K)$.
\end{lemma}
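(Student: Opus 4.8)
The plan is to phrase everything inside the full subcategory $\C\subseteq s\Alg_R$ of Lemma~\ref{hatful}: by construction $A^K$ is the unique object with $U(A^K)=(UA)^K$, so both assertions are really statements about the simplicial ring $(UA)^K$ formed in $s\Alg_R$. Two stability properties of $\C$ will be used throughout. First, $\C$ is closed under all limits taken in $s\Alg_R$: by Proposition~\ref{cNhat} the functor $U\colon\ind(\widehat{FGs\Alg_R})\to s\Alg_R$ is a right adjoint, hence preserves limits, by Lemma~\ref{hatful} it is fully faithful, and $\ind(\widehat{FGs\Alg_R})$ is complete (Proposition~\ref{cNhat}). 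Secondly, $\C$ is closed under filtered colimits in $s\Alg_R$, since $U$ preserves filtered colimits (Proposition~\ref{cNhat}).

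For the first assertion I would reduce to the case $K=\Delta^p$. Since $K=\LLim_{\Delta^p\to K}\Delta^p$ and the functor $K'\mapsto(UA)^{K'}$ carries colimits to limits, $(UA)^K=\Lim_{\Delta^p\to K}(UA)^{\Delta^p}$ in $s\Alg_R$, so by the first stability property it is enough to show $(UA)^{\Delta^p}\in\C$ for every $p$. Next, $\Delta^p$ is a compact object of $\bS$, so $(-)^{\Delta^p}$ commutes with filtered colimits; writing $UA=\LLim_\alpha\widehat{C_\alpha}$ with $C_\alpha\in FGs\Alg_R$ gives $(UA)^{\Delta^p}=\LLim_\alpha(\widehat{C_\alpha})^{\Delta^p}$, so by the second stability property it suffices to prove $(\widehat C)^{\Delta^p}\in\C$ for a finitely generated $C$.

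This last step is the main obstacle. I would show that the canonical map $C^{\Delta^p}\to(\widehat C)^{\Delta^p}$ identifies the target with the $I$-adic completion of $C^{\Delta^p}$, i.e. $(\widehat C)^{\Delta^p}=\widehat{C^{\Delta^p}}$, and that $C^{\Delta^p}$ is finitely generated; then $(\widehat C)^{\Delta^p}\in\widehat{FGs\Alg_R}\subseteq\C$. For the completion statement one compares, as in the proof of Lemma~\ref{wsmall}, the filtration $\{(I_C^{\,n})^{\Delta^p}\}_n$ of $C^{\Delta^p}$ with its $I_{C^{\Delta^p}}$-adic filtration; the former consists of ideals because a surjection of simplicial rings is a Kan fibration, so $(-)^{\Delta^p}$ carries the levelwise-surjective maps $C\onto C/I_C^{\,n}$ to levelwise-surjective maps, and one uses $\pi_0(C^{\Delta^p})=\pi_0C$ (the inclusion of a vertex into $\Delta^p$ is a trivial cofibration and $C$ is fibrant), after which an Artin--Rees argument over the Noetherian rings $C_n$ shows the two filtrations are mutually cofinal. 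Finite generation of $C^{\Delta^p}$ is checked by building $\Delta^p$ from its finitely many non-degenerate simplices, which presents $C^{\Delta^p}$ level-wise as a finite iterated fibre product, along the (surjective) face maps of $C$, of the rings $C_m$, under which finite generation over the Noetherian ring $R$ is preserved.

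For the second assertion, suppose $I_A^{\,N}=0$, where $I_A=\ker(UA\to\pi_0UA)$. The monomorphism $I_A\into UA$ of simplicial abelian groups gives, by left-exactness of $(-)^K$ on simplicial abelian groups, an exact sequence $0\to(I_A)^K\to(UA)^K\to(\pi_0UA)^K$; moreover $(I_A)^K$ is an ideal of $(UA)^K$ (the product in $(UA)^K$ of a section with values in $I_A$ and an arbitrary section has values in $I_A\cdot UA\subseteq I_A$), and any product of $N$ of its elements has values in $I_A^{\,N}=0$, so $((I_A)^K)^N=0$. Since $\pi_0UA$ is discrete, $(\pi_0UA)^K$ is the constant simplicial ring on the discrete ring $\prod_{\pi_0K}\pi_0(UA)$; hence $(UA)^K/(I_A)^K$, being a simplicial subring of it, is discrete, and so the surjection $(UA)^K\to(UA)^K/(I_A)^K$ factors through $\pi_0((UA)^K)$. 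This forces $I_{(UA)^K}=\ker\bigl((UA)^K\to\pi_0((UA)^K)\bigr)\subseteq(I_A)^K$, whence $I_{(UA)^K}^{\,N}=0$. Thus $A^K\to\pi_0(A^K)$ is again nilpotent, completing the plan.
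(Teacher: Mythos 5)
Your reduction to $K=\Delta^p$ via limits and then to a single $\widehat C$ via filtered colimits matches the paper exactly. Where you part ways is the core step. The paper never touches $C^{\Delta^p}$: working directly with $A=\widehat C$, it uses the exact sequence $0\to I_A^{\Delta^n}\to A^{\Delta^n}\to(\pi_0A)^{\Delta^n}=\pi_0A\to0$ (valid since $\Delta^n$ is connected and contractible) to identify $I_{A^{\Delta^n}}=I_A^{\Delta^n}$, then checks $I$-adic completeness of $A^{\Delta^n}$ by commuting $(-)^{\Delta^n}$ with $\Lim_m A/I_A^m$. You instead propose to prove $(\widehat C)^{\Delta^p}=\widehat{C^{\Delta^p}}$ together with $C^{\Delta^p}\in FGs\Alg_R$, so that $(\widehat C)^{\Delta^p}\in\widehat{FGs\Alg_R}$ by definition.

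That extra finite-generation claim is the genuine gap. The paper's notion of ``finitely generated'' for a simplicial algebra is about a finite set of simplicial generators, not levelwise finite generation, and your argument (``level-wise as a finite iterated fibre product of the $C_m$'') at best addresses the latter. Even levelwise it is unclear, since the relevant fibre products along face maps need not preserve finite generation of algebras. There is no reason to expect $C^{\Delta^p}$ to be finitely generated as a simplicial $R$-algebra, and the paper's proof is structured precisely so as to sidestep the question. A second, smaller, problem is the justification that $(-)^{\Delta^p}$ carries the surjection $C\onto C/I_C^n$ to a surjection ``because a surjection of simplicial rings is a Kan fibration'': being a Kan fibration only gives the right lifting property against horn inclusions $\Lambda^m_k\into\Delta^m$, not against $\emptyset\into\Delta^p\times\Delta^q$, so it does not by itself give surjectivity of $\Hom(\Delta^p\times\Delta^q,-)$. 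The fact you need (that surjections of simplicial abelian groups are preserved by $(-)^K$) is true, but the reason is that such a surjection is a base change of the \emph{trivial} fibration $WI\to\bar WI$ for $I$ the kernel, or equivalently an argument at the level of normalised chain complexes; it is not a formal consequence of fibrancy.

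Your treatment of the nilpotence assertion, by contrast, is correct and is if anything tidier than the paper's: the paper only proves $I_{A^{\Delta^n}}^m=0$ and then asserts $I_{A^K}^m=0$ ``hence'' for general $K$, while you supply the missing argument by noting $(I_A)^K$ is an ideal with $((I_A)^K)^N=0$, that $(UA)^K/(I_A)^K$ embeds into the discrete ring $(\pi_0UA)^K$, and deducing $I_{(UA)^K}\subseteq(I_A)^K$. This handles disconnected $K$ uniformly and makes explicit the point that the paper compresses into a single clause.
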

\begin{proof}
$A^K$ can be expressed as the limit
$$
\lim_{\substack{\lla \\ (\Delta^n \xra{f} K) \in \Delta\da K }} A^{\Delta^n};
$$
since the inclusion functor $U : \ind(\widehat{FGs\Alg_R})\to s\Alg_R$ is a right adjoint, it preserves arbitrary limits,
so  it suffices to show that $A^{\Delta^n} \in \ind(\widehat{FGs\Alg_R})$. 

Write $A:= \LLim_{\alpha} A_{\alpha}$, for $A_{\alpha} \in \widehat{FGs\Alg_R}$. Since $\Delta^n$ is finite, we have $A^{\Delta^n}= \LLim_{\alpha} A_{\alpha}^{\Delta^n}$, so we may assume that $A \in  \widehat{FGs\Alg_R}$.

The exact sequence $0 \to I_A \to A \to \pi_0A \to 0$ gives an exact sequence $0 \to I_A^{\Delta^n} \to A^{\Delta^n} \to \pi_0A \to 0$ (as $(\pi_0A)^{\Delta^n}= \pi_0A$, since $\Delta^n$ is connected). Since $\Delta^n$ is contractible, $\pi_0(I_A^{\Delta^n})= \pi_0(I_A)=0$, so  $I_{A^{\Delta^n}}= I_A^{\Delta^n}$. Hence
\[
\Lim_m (A^{\Delta^n}/ I_{A^{\Delta^n}}^m)= \Lim_m (A^{\Delta^n}/ (I_{A}^{\Delta^n})^m)= \Lim (A/I_A^m)^{\Delta^n}= A^{\Delta^n},
\]
so $A^{\Delta^n} \in \widehat{FGs\Alg_R}$.

Finally, if $I_A^m=0 $, then $(I_{A}^{\Delta^n})^m=0$, so $I_{A^{\Delta^n}}^m=0$ for all $n$, and hence $I_{A^K}^m=0$ for all $K \in \bS$. 
 \end{proof}

In fact, this makes $\ind(\widehat{FGs\Alg_R})$ into a simplicial model category in the sense of \cite{sht} Ch. II (with $U : \ind(\widehat{FGs\Alg_R})\to s\Alg_R$ becoming a simplicial right Quillen equivalence).
Although the same is not true  for $dg_+\Alg_R$ or  $\ind(\widehat{FGdg_+\Alg_R})$, we now show that they carry  compatible weak simplicial structures. 
\begin{definition}
Explicitly, we say that a model category $\C$ has a weak simplicial structure if we  have the following data:
\begin{enumerate}
\item a functor $\HHom_{\C}: \C^{\op} \by \C \to \bS$ such that $\HHom_{\C}(A,B)_0 = \Hom_{\C}(A,B)$. 
 \item 
a functor  $(f\bS)^{\op}\by \C \to \C$ (where $f\bS$ is the category of finite simplicial sets), denoted by $(K,B) \mapsto B^K$, with natural isomorphisms
$$
\Hom_{\C}(A, B^K)\cong \Hom_{\bS}(K,\HHom_{\C}(A,B)).
$$
\end{enumerate}
These must satisfy the property (known as SM7) that if $i:A \to B$ is a cofibration in $\C$, and $p:X \to Y$ a fibration, then
\[
\HHom_{\C}(B, X) \to \HHom_{\C}(A, X)\by_{\HHom_{\C}(A, Y)}\HHom_{\C}(B, Y)
\] 
is a fibration in $\bS$ which is trivial whenever either $i$ or $p$ is a weak equivalence.
\end{definition}
This means that $\C$ satisfies all of the axioms of a simplicial model category from \cite{sht} Ch. II except for conditions (2) and (3) of  Definition II.2.1 (which require that for all objects $A \in \C$,  the functors $\HHom_{\C}(A, -):\C \to \bS$ and $\HHom_{\C}(-, A):\C^{\op} \to \bS  $ have left adjoints). 

Note that  this is enough to ensure that $\C$ is still  a simplicial model category in the sense of \cite{QHA}. 

\begin{lemma}\label{simplicialstr}
 The model categories $dg_+\Alg_R$ and $\ind(\widehat{FGdg_+\Alg_R})$ carry weak simplicial structures.
\end{lemma}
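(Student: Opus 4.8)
The plan is to produce the cotensors $B^K$ by hand and then reduce the axioms to a statement about connective chain complexes via Dold--Kan. I would first treat $dg_+\Alg_R$. Let $\Omega^\ast(\Delta^\bullet)$ be the cosimplicial commutative dg $R$-algebra of polynomial differential forms on the simplices, graded homologically, so $\Omega^\ast(\Delta^n)$ sits in chain degrees $[-n,0]$ (here $R$ must be a $\Q$-algebra, which is also what makes the model structure on $dg_+\Alg_R$ available). Write $\tau_{\ge 0}\colon dg\Alg_R \to dg_+\Alg_R$ for the good-truncation functor; it is right adjoint to the inclusion $dg_+\Alg_R \hookrightarrow dg\Alg_R$, and since that inclusion is strong symmetric monoidal, $\tau_{\ge 0}$ is lax symmetric monoidal, hence preserves commutative dg algebras and limits. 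For $B \in dg_+\Alg_R$ I would set $B^{\Delta^n}:= \tau_{\ge 0}(B\ten_R\Omega^\ast(\Delta^n))$, a cosimplicial object of $dg_+\Alg_R$, and then
$$
\HHom_{dg_+\Alg_R}(A,B)_n := \Hom_{dg_+\Alg_R}(A, B^{\Delta^n}), \qquad B^K := \lim_{(\Delta^n \to K)} B^{\Delta^n}\ \ (K \in f\bS),
$$
the latter a finite limit because $K$ is a finite colimit of its simplices. Commuting $\Hom_{dg_+\Alg_R}(A,-)$ past the limit and rewriting over the simplex category of $K$ yields $\Hom_{dg_+\Alg_R}(A,B^K)\cong \Hom_{\bS}(K,\HHom_{dg_+\Alg_R}(A,B))$, and $\HHom(A,B)_0=\Hom(A,B)$. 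One does not get a genuine simplicial structure because the would-be tensoring $B\ten_R N_\ast(K)$ is a coalgebra, not an algebra — this is the reason only a weak structure is available.

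Exactly the same recipe should handle $\ind(\widehat{FGdg_+\Alg_R})$: by Lemma \ref{hatful} this is a full subcategory of $dg_+\Alg_R$, closed under limits since $U$ is a right adjoint (Proposition \ref{cNhat}), and as in Definition \ref{salgstr} I would pin down $B^K$ by $U(B^K)=(UB)^K$, then check that $(UB)^{\Delta^n}$ stays in the subcategory by writing $\Omega^\ast(\Delta^n)$ as a filtered colimit of finitely generated sub-dg-algebras and using that $\tau_{\ge 0}$ commutes with filtered colimits and that $I$-adic completion respects the nilpotence and finite-generation conditions, as in the lemma following Definition \ref{salgstr} and in Lemmas \ref{wtiny} and \ref{wsmall}.

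The remaining work is SM7. Using that weak equivalences and fibrations in $dg_+\Alg_R$ are created on underlying complexes (fibrations being surjections in positive chain degrees) — and that cofibrations, fibrations and weak equivalences in $\ind(\widehat{FGdg_+\Alg_R})$ are detected by $U$, so its SM7 follows from that of $dg_+\Alg_R$ — I would reduce SM7, by the standard passage between its equivalent forms, to the pullback-cotensor statement: for $p\colon X\to Y$ a fibration of dg algebras and $j\colon K\into L$ an inclusion of finite simplicial sets, $X^L \to X^K\by_{Y^K}Y^L$ is a fibration, trivial if $p$ or $j$ is. Since the whole construction is built functorially from underlying chain complexes (with $\Omega^\ast(\Delta^n)\simeq R$ connective, so that $B\to B^{\Delta^n}$ is already a weak equivalence), this follows from the corresponding statement for the cosimplicial connective complex $\tau_{\ge 0}(-\ten_R\Omega^\ast(\Delta^\bullet))$, i.e. from the fact that it gives a simplicial cotensoring of $dg_+\Mod_R$ compatible with the projective model structure — equivalently, through Dold--Kan, the simplicial model structure on $s\Mod_R$. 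Concretely one tests against the generating (trivial) cofibrations of $dg_+\Alg_R$ and the inclusions $\pd\Delta^n\into\Delta^n$, $\Lambda^n_k\into\Delta^n$, invoking the classical facts that $\Omega^\ast(\Lambda^n_k)\to\Omega^\ast(\Delta^n)$ and $\Omega^\ast(\pd\Delta^n)\to\Omega^\ast(\Delta^n)$ are surjective (the first also a quasi-isomorphism) and that these properties survive $\tau_{\ge 0}$.

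I expect the main obstacle to be exactly this last point: checking that inserting the truncation $\tau_{\ge 0}$ — which is forced on us in order to keep the cotensor in non-negative chain degrees while retaining its commutative dg algebra structure — does not destroy the lifting properties needed for SM7. What should make it go through is that fibrations of $dg_+$-algebras only demand surjectivity in positive degrees, where $\tau_{\ge 0}$ changes nothing; the degree-$0$ part, where truncation does intervene, is only constrained up to weak equivalence.
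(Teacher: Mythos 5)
Your construction matches the paper's: truncated polynomial de Rham forms $B^{\Delta^n}=\tau_{\ge 0}(B\ten\Omega_n)$, extension to $B^K$ by finite limits, the adjunction formula, and reduction of SM7 to surjectivity of the restriction maps $\Omega(\Delta^n)\to\Omega(\pd\Delta^n)$ and $\Omega(\Delta^n)\to\Omega(\Lambda^n_k)$ (you have these arrows written backwards, and $\Omega(\Delta^\bullet)$ is simplicial, not cosimplicial, but neither slip affects the argument). The point about the tensoring $B\ten N_*(K)$ being merely a coalgebra is a good explanation of why only a weak structure is available, consistent with the paper's remark that $(-)^K$ has no left adjoint.

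However, there is a genuine gap in your handling of $\ind(\widehat{FGdg_+\Alg_R})$. You transplant the simplicial recipe ``pin down $B^K$ by $U(B^K)=(UB)^K$'' and then assert that $(UB)^{\Delta^n}$ already lies in the subcategory. Unlike the simplicial case (where the paper proves $A^K\in\ind(\widehat{FGs\Alg_R})$ in the lemma following Definition \ref{salgstr}), this fails here: tensoring with $\Omega_n$ introduces a polynomial ring $\Q[t_1,\ldots,t_n]$ in degree $0$, and the resulting $\tau_{\ge 0}(A\ten\Omega_n)$ is in general not a filtered colimit of $I$-adically complete finitely generated subalgebras. This is exactly why the paper's proof explicitly replaces $A^K$ by its completion over $\H_0(A^K)$ before extending to $\ind(\widehat{FGdg_+\Alg_R})$; your passing mention of ``$I$-adic completion respects the nilpotence and finite-generation conditions'' gestures at this but is presented as if it justified skipping the completion step, rather than as the additional modification that is actually required. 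With that correction inserted, your argument agrees with the paper's.
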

\begin{proof}
First set $\Omega_n=\Omega(\Delta^n)$ to be the  cochain algebra 
$$
\Q[t_0, t_1, \ldots, t_n,dt_0, dt_1, \ldots, dt_n ]/(\sum t_i -1, \sum dt_i)
$$  
of rational differential forms on the $n$-simplex $\Delta^n$.
 These fit together to form a simplicial complex $\Omega_{\bt}$ of DG-algebras, and we define $A^{\Delta^n}$ as the good truncation $A^{\Delta^n}:= \tau_{\ge 0}(A \ten \Omega_n)$.  Note that this construction only commutes with finite limits, so only extends to define $A^K$ for finite simplicial sets $K$, and does not have a left adjoint.

For $A \in \widehat{FGdg_+\Alg_R}$, we replace $A^K$ with its completion over $\H_0(A^K)$, and extend this construction to 
$\ind(\widehat{FGdg_+\Alg_R})$ in the obvious way.

That these have the required properties follows because the matching maps $\Omega_n \to M_n\Omega=\Omega(\pd\Delta^n)$ are surjective. Explicitly,
$$
M_n\Omega \cong 
\Omega_n/( t_0\cdots t_n, \sum_i t_0\cdots t_{i-1}  (dt_i) t_{i+1} \cdots t_n).
$$ 
\end{proof}

\begin{definition}
Although the categories $s\cN_R^{\flat}$ and $dg_+\cN_R^{\flat}$ are not model categories, we endow them with weak simplicial structures inherited from $s\Alg_R$ and $dg_+\Alg_R$, respectively. The key observation is that for $K \in f\bS$ and $A \in d\cN^{\flat}$, the object $A^K$ lies in $ d\cN^{\flat}$.
\end{definition}

\subsection{Deriving functors}

\begin{definition}
Given
a functor $F: d\cN^{\flat} \to \bS$, we  define  a functor $\underline{F}: d\cN^{\flat}\to s\bS$ to  the category of bisimplicial sets by 
$$
\underline{F}(A)_{n} :=  F(A^{\Delta^n}).
$$
\end{definition}

For a functor $F:\C \to \Set$, we will abuse notation by also writing $F: \C\to \bS$ for the composition $ \C\xra{F} \Set \to \bS$. 

\begin{proposition}\label{smatchnew}
If $F:d\cN^{\flat} \to \bS$ is homotopy-homogeneous, then
for $A \to B$ an acyclic little extension  in $d\cN^{\flat}$    and $K \in \bS$ finite, the map
\[
F(A^K)  \to( M_K^h\uline{F}(A))\by_{(M_K^h\uline{F}(B))}^hF(B^K) 
\]
is a weak equivalence in $\bS$, where $M_K^h$ denotes the Reedy homotopy $K$-matching object. 
\end{proposition}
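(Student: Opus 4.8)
The plan is to reduce to a split square-zero extension and then to compare homotopy fibres on the two sides.

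First I would observe that an acyclic little extension is necessarily a split square-zero extension. Writing $I=\ker(A\to B)$, acyclicity of $A\to B$ forces $\pi_0I=0$, hence $\pi_0A\xra{\sim}\pi_0B$; thus $I\subseteq I_A$, so $I^2\subseteq I_A\cdot I=0$, and the identity $I_A\cdot I=0$ makes $I$ a module over $\pi_0A=\pi_0B$, hence a $B$-module. Since $I$ is acyclic, $\Ext^i_B(\bL_{B/R},I)=0$ for all $i$, so the class classifying the square-zero extension vanishes and $A\cong B\oplus I$ over $B$ (alternatively one could first pass to tiny extensions via Lemma~\ref{wtiny} and split those). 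It therefore suffices to treat $A=B\oplus I$ with $I$ an acyclic $B$-module. For finite $K$ one then has $(B\oplus I)^K=B^K\oplus I^K$, where $I^K:=M_K(n\mapsto I^{\Delta^n})$ is the strict $K$-matching module; since the matching maps $I^{\Delta^n}\to I^{\partial\Delta^n}$ are fibrations of simplicial $B$-modules and each $I^{\Delta^n}$ is acyclic ($\Delta^n$ being contractible), $I^K$ is acyclic and $(B\oplus I)^K\to B^K$ is again a trivial square-zero extension, whose homotopy fibre over $x\in F(B^K)$ is by definition $T_x(F/R,I^K)$.

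The key input I would establish is that, for $x\in F(B^K)$, the tangent functor $T_x(F/R,-)\colon s\Mod_{B^K}\to\bS$ commutes with finite homotopy limits of modules taken along surjections. This follows from homotopy-homogeneity applied to trivial square-zero extensions: if $M_1\to M_0$ is a surjection of $B^K$-modules then $B^K\oplus M_1\to B^K\oplus M_0$ is a square-zero extension, hence a fibration, so $F$ carries $B^K\oplus(M_1\by_{M_0}M_2)=(B^K\oplus M_1)\by_{B^K\oplus M_0}(B^K\oplus M_2)$ to a homotopy fibre product, and restricting over $\{x\}$ gives $T_x(F/R,M_1\by_{M_0}M_2)\simeq T_x(F/R,M_1)\by^h_{T_x(F/R,M_0)}T_x(F/R,M_2)$; likewise $T_x(F/R,-)$ preserves finite products and $T_x(F/R,0)\simeq\ast$. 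Since $I^K$ is built, along the skeletal filtration of $K$, as an iterated pullback of finite products of the fibrations $I^{\Delta^j}\to I^{\partial\Delta^j}$, applying $T_x(F/R,-)$ and running the same skeletal recursion that computes the homotopy matching object — using Lemma~\ref{tantrans} to identify $T_x(F/R,f_{n*}I^{\Delta^n})\simeq T_{x_n}(F/R,I^{\Delta^n})$ for $f_n\colon B^K\to B^{\Delta^n}$ and $x_n$ the image of $x$ — would give
\[
T_x(F/R,I^K)\ \simeq\ M_K^h\bigl(n\mapsto T_{x_n}(F/R,I^{\Delta^n})\bigr).
\]

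Finally I would conclude as follows: $T_{x_n}(F/R,I^{\Delta^n})$ is the homotopy fibre over $x_n$ of $\uline F(B\oplus I)_n=F(B^{\Delta^n}\oplus I^{\Delta^n})\to\uline F(B)_n=F(B^{\Delta^n})$, and since $M_K^h$ commutes with homotopy fibres, the displayed equivalence identifies, compatibly in $x$, the homotopy fibres of $F((B\oplus I)^K)\to F(B^K)$ with those of $M_K^h\uline F(B\oplus I)\to M_K^h\uline F(B)$; being a fibrewise equivalence over $F(B^K)$, the natural map $F((B\oplus I)^K)\to (M_K^h\uline F(B\oplus I))\by^h_{M_K^h\uline F(B)}F(B^K)$ is then a weak equivalence, which is the assertion. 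The step I expect to be the main obstacle is precisely this initial reduction and the insistence on working with \emph{split} extensions: because $F$ is only homotopy-homogeneous and not homotopy-preserving, one cannot appeal to Proposition~\ref{obs}, the relevant fibre must literally be a tangent space rather than merely a torsor under one, and one cannot freely replace $A^K$ by a weakly equivalent but non-isomorphic algebra; the finiteness of $K$ is what makes $I^K$ acyclic and lets one present $M_K$ as a finite iterated pullback along fibrations.
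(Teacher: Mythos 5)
Your proposal hinges on the very first step, and that step is where the gap lies.

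You assert that an acyclic little extension $A\to B$ is necessarily a \emph{split} square-zero extension, $A\cong B\oplus I$, by arguing that $\Ext^i_B(\bL_{B/R},I)=0$ forces the classifying class to vanish. But the classification of square-zero extensions by $\Ext^1_B(\bL_{B/R},-)$ is a statement in the homotopy category: vanishing of the class shows $A$ is \emph{weakly equivalent} to the trivial extension $B\oplus I$, not strictly isomorphic to it. This matters here precisely for the reason you yourself flag at the end of your write-up -- since $F$ is only homotopy-homogeneous and not homotopy-preserving, you cannot replace $A^K$ by a weakly equivalent but non-isomorphic algebra, so you genuinely need the strict isomorphism. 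The fallback of passing to tiny acyclic extensions via Lemma~\ref{wtiny} does not resolve this either: a tiny acyclic extension $A\to B$ with kernel $\cone(M)[-r]$ is a trivial fibration, and a strict section $B\to A$ exists when $B$ is cofibrant, but objects of $d\cN^\flat$ are bounded and typically not cofibrant, so no section is guaranteed. (Already the module-level surjection $A_{r+1}\to B_{r+1}$ with kernel $M$ need not split.) Everything after the splitting step -- the observation that $T_x(F/R,-)$ turns pullbacks along surjections into homotopy pullbacks, and the skeletal recursion computing $T_x(F/R,I^K)$ as an iterated homotopy matching object -- is sound, but it sits on the unsupported reduction.

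The paper's proof runs essentially the skeletal induction you have in mind, but applied directly to $F$ and the extension $A\to B$ rather than to the tangent functor on a split ideal, so no splitting is required. The point that makes this work is that the relative matching map
\[
A^{\Delta^n}\ \longrightarrow\ A^{\partial\Delta^n}\times_{B^{\partial\Delta^n}}B^{\Delta^n}
\]
is itself a square-zero extension: it is surjective because $I$ is acyclic (so $I^{\Delta^n}\to I^{\partial\Delta^n}$ is a trivial fibration), and its kernel sits inside the square-zero ideal $I^{\Delta^n}$. Homotopy-homogeneity of $F$ therefore applies to this map directly, and the skeletal decomposition of $K$ via $K'=\sk_{n-1}K$, $\partial\Delta^n$, the latching object $L_nK$, and $K_n$ gives a $3\times3$ diagram of homotopy pullbacks that, combined with the inductive hypothesis for $K'$ and $\partial\Delta^n$, identifies $F(A^K)$ with $(M_K^h\underline{F}(A))\times^h_{M_K^h\underline{F}(B)}F(B^K)$. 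If you reorganise your argument to work on $F$ in place of $T_x(F/R,-)$, using this surjectivity observation instead of the splitting, you will recover the paper's proof.
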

\begin{proof}
We prove this by induction on the dimension of $K$. If $K$ is dimension $0$ (i.e. discrete), then the map is automatically an equivalence, as 
\[
M_K^h\uline{F}(A)= \uline{F}(A)_{0}^K = F(A^K).
\] 

Now assume the statement holds for all finite simplicial sets of dimension $<n$, take $K$ of dimension $n$, and let $K':= \sk_{n-1}K$, the $(n-1)$-skeleton. Thus there is a pushout square
\[
\begin{CD}
(\pd\Delta^n \by N_nK) \sqcup (\Delta^n \by L_nK) @>>>      \Delta^n \by K_n\\
@VVV @VVV\\
K' @>>> K,
\end{CD}
\]
where $L_nK$ is the $n$th latching object and  $N_nK= K_n-L_nK$. Hence we have a pullback square
\[
\begin{CD}
A^K  @>>> B^K\by_{B^{K'}} A^{K'}\\
@VVV @VVV\\
B^K\by_{B^{(\Delta^n \by K_n)}}A^{(\Delta^n \by K_n)} @>>> B^K\by_{[B^{(\pd \Delta^n \by N_nK)}\by B^{(\Delta^n \by L_nK)}]}[A^{(\pd\Delta^n \by N_nK)}\by A^{(\Delta^n \by L_nK)}].
\end{CD}
\]

Now, since $A \to B$ is an acyclic little extension, the map $A^{\Delta^n}  \to A^{\pd\Delta^n} \by_{B^{\pd\Delta^n}}B^{\Delta^n}$ is a square-zero extension, so the bottom map in the diagram above is  a square-zero extension, giving a homotopy pullback square
\[
\begin{CD}
F(A^K)  @>>> F(B^K)\by_{F(B^{K'})}^h F(A^{K'})\\
@VVV @VVV\\
F(B^K)\by_{F(B^{\Delta^n})^{K_n}}^hF(A^{\Delta^n})^{K_n} @>>> F(B^K)\by^h_{[F(B^{\pd\Delta^n})^{N_nK}\by F(B^{\Delta^n})^{L_nK}]}[F(A^{\pd\Delta^n})^{N_nK}\by F(A^{\Delta^n})^{L_nK}].
\end{CD}
\]
Here, the top right isomorphism comes from $A^{K'} \onto B^{K'}$, the bottom left from $A^{\Delta^n} \onto B^{\Delta^n}$, and the bottom right from  $A^{\Delta^n} \onto B^{\Delta^n}$ and from $A^{\pd\Delta^n} \onto B^{\pd\Delta^n}$; these are all square-zero extensions and $F$ is homotopy-homogeneous.

By induction (using $F(A^{K'})  \simeq (M_{K'}^h\uline{F}(A))\by_{(M_{K'}^h\uline{F}(B))}^hF(B^{K'})$ and $F(A^{\pd \Delta^n})  \simeq (M_n^h\uline{F}(A))\by_{(M_n^h\uline{F}(B))}F(B^{\pd \Delta^n})$), we can rewrite this as saying that the following square is a homotopy pullback
\[
\begin{CD}
F(A^K)  @>>> F(B^K)\by_{M_{K'}^h\uline{F}(B)}^h M_{K'}^h\uline{F}(A)\\
@VVV @VVV\\
F(B^K)\by_{F(B^{\Delta^n})^{K_n}}^hF(A^{\Delta^n})^{K_n} @>>> F(B^K)\by_{[M_n^h\uline{F}(B)^{N_nK}\by F(B^{\Delta^n})^{L_nK}]}[M_n^h\uline{F}(A)^{N_nK}\by F(A^{\Delta^n})^{L_nK}].
\end{CD}
\]
Now just observe that  this pullback   defines $F(B^K)\by_{M_K^h\uline{F}(B)}M_K^h\uline{F}(A)$, as required.
\end{proof}

\begin{definition}
Say that a   functor $F: d\cN^{\flat} \to \bS$ is homotopy-surjecting if for all  tiny acyclic extensions  $A \to B$, the map
$$
\pi_0F(A) \to \pi_0F(B)
$$
is surjective. 
\end{definition}

\begin{definition}
Define  $\bar{W}:s\bS \to \bS$ to be the right adjoint to  Illusie's total $\Dec$ functor given by $\DEC(X)_{mn}= X_{m+n+1}$. Explicitly,
\[
 \bar{W}_p(X) = \{(x_0, x_1, \ldots, x_p) \in
 \prod^p_{i=0} X_{i,p-i} | \pd^v_0 x_i = \pd^h_{i+1}x_{i+1},\, \forall 0 \le i <p\}
\]
with operations
\begin{eqnarray*}
 \pd_i(x_0, \ldots, x_p) &=& (\pd^v_i x_0, \pd^v_{i-1}x_1, \ldots , \pd^v_1 x_{i-1}, \pd^h_ix_{i+1}, \pd^h_i x_{i+2}, \ldots, \pd^h_i x_p),\\
\sigma_i(x_0, \ldots, x_p) &=& (\sigma^v_i x_0,\sigma^v_{i-1}x_1, \ldots , \sigma^v_0 x_i, \sigma^h_i x_i, \sigma^h_i x_{i+1}, \ldots, \sigma^h_i x_p).
\end{eqnarray*}
\end{definition}

In \cite{CRdiag},  it is established that the canonical natural transformation
\[
\diag X \to \bar{W}X
\]
from the diagonal is a weak equivalence for all $X$.

\begin{corollary}\label{settotop} 
If a homotopy-homogeneous functor $F:d\cN \to \bS$ is  homotopy-surjecting, then   the   functor  $\bar{W}\underline{F}:d\cN \to \bS$ is  homotopy-preserving. 
\end{corollary}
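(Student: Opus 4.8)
The plan is to reduce the statement to a single tiny acyclic extension and then to deduce it from Proposition~\ref{smatchnew}, which controls the interaction of $F$ with the homotopy matching objects of $\underline{F}$, the homotopy-surjecting hypothesis supplying the behaviour of $\pi_0$.

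\emph{Step 1 (reduction to tiny acyclic extensions).} First I would show that $\bar{W}\underline{F}$ preserves all weak equivalences as soon as it preserves surjective ones. Given a weak equivalence $f\co A\to B$ in $d\cN^{\flat}$, apply Lemma~\ref{properw} inside the ambient model category $s\Alg_R$ (resp.\ $dg_+\Alg_R$), which is right proper with every object fibrant: this gives $A\xra{i}C$ with trivial fibrations $g_0\co C\to A$ and $g_1\co C\to B$ satisfying $g_0i=\id$ and $g_1i=f$. The object $C=A\by_{f,B,\ev_0}B^{\Delta^1}$ again lies in $d\cN^{\flat}$, since the weak simplicial structure keeps $B^{\Delta^1}$ in $d\cN^{\flat}$ and $d\cN^{\flat}$ is closed under fibre products, and $g_0,g_1$ are surjective weak equivalences there. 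From $\bar{W}\underline{F}(g_0)\circ\bar{W}\underline{F}(i)=\id$ and two-out-of-three, $\bar{W}\underline{F}(i)$ is a weak equivalence, hence so is $\bar{W}\underline{F}(f)=\bar{W}\underline{F}(g_1)\circ\bar{W}\underline{F}(i)$. By Lemma~\ref{wtiny} every surjective weak equivalence in $d\cN^{\flat}$ is a finite composite of tiny acyclic extensions, so it remains to treat a tiny acyclic $A\to B$. (This is the reduction already used in Proposition~\ref{hfettransfer}.)

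\emph{Step 2 (the tiny acyclic case).} Using the natural equivalence $\diag X\simeq\bar{W}X$ of \cite{CRdiag}, it suffices to show $\diag\underline{F}(A)\to\diag\underline{F}(B)$ is a weak equivalence. View $\underline{F}(A)\to\underline{F}(B)$ as a map of simplicial objects in $\bS$, with $\underline{F}(A)_n=F(A^{\Delta^n})$. Proposition~\ref{smatchnew}, applied with $K$ ranging over finite simplicial sets and in particular over the $\partial\Delta^n$, says that the natural squares comparing $F(A^{\partial\Delta^n})\to F(B^{\partial\Delta^n})$ with the homotopy matching maps $M^h_n\underline{F}(A)\to M^h_n\underline{F}(B)$ are homotopy cartesian. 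Passing to levelwise homotopy fibres over a vertex $v$ of $\underline{F}(B)$ yields a simplicial space $\cF_\bullet$ whose matching maps $\cF_n\to M^h_n\cF$ are identified, via the squares above, with the maps induced by the cotensor maps $A^{\Delta^n}\to A^{\partial\Delta^n}$ (relative to $B$); in particular the homotopy fibre of $\diag\underline{F}(A)\to\diag\underline{F}(B)$ over $v$ is $\diag\cF_\bullet$.

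\emph{Step 3 (conclusion) and the main obstacle.} It then remains to check that $\diag\underline{F}$ is surjective on $\pi_0$ and that the fibre $\diag\cF_\bullet$ is weakly contractible. For $\pi_0$: the degree-$0$ part of $\bar{W}\underline{F}(A)$ is $F(A)_0$, and $\pi_0\diag\underline{F}$ is a quotient of $\pi_0F$, so since $F$ is homotopy-surjecting and $A\to B$ tiny acyclic we get $\pi_0F(A)\onto\pi_0F(B)$, hence $\pi_0\diag\underline{F}(A)\onto\pi_0\diag\underline{F}(B)$. For contractibility of $\diag\cF_\bullet$ one feeds the matching-object equivalences of Proposition~\ref{smatchnew} into the skeletal filtration of $\cF_\bullet$ (or the spectral sequence of the bisimplicial set) computing $\pi_*\diag\cF_\bullet$, to see that the higher homotopy cancels stage by stage; together with Step~1 this shows $\bar{W}\underline{F}$ is homotopy-preserving. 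I expect Step~3 to be the crux: since $F$ is \emph{not} homotopy-preserving one cannot argue levelwise, and it is precisely the homotopy-surjecting hypothesis, together with the matching-object control of Proposition~\ref{smatchnew} pushed through $\diag$/$\bar{W}$, that is needed for the argument to close up. A smaller but genuine point, dealt with in Step~1, is that all the auxiliary constructions (path objects, fibre products) must stay inside $d\cN^{\flat}$.
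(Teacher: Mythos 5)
Your Step 1 is exactly the paper's reduction (Lemma \ref{properw} together with Lemma \ref{wtiny}), and you are right to flag that one must check the auxiliary objects $C=A\by_{f,B,\ev_0}B^{\Delta^1}$ stay inside $d\cN^{\flat}$; the paper leaves this implicit. Your Step 2 is also pointed in the right direction: Proposition \ref{smatchnew} is indeed the engine. But you correctly sense that Step 3 is where the proof actually lives, and your proposal leaves it as a gesture (``feed the matching-object equivalences into the skeletal filtration\dots the higher homotopy cancels stage by stage''). That is not yet an argument, and the fibre-contractibility route you sketch is also awkward because taking levelwise homotopy fibres of a bisimplicial map is only well-behaved after a Reedy fibrant replacement, which you never introduce.

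The idea you are missing is that Proposition \ref{smatchnew}, combined once more with homotopy-homogeneity, identifies the relative homotopy matching map
\[
\underline{F}(A)_n \lra M^h_{\pd\Delta^n}\underline{F}(A)\by^h_{M^h_{\pd\Delta^n}\underline{F}(B)}\underline{F}(B)_n
\]
up to weak equivalence with the map $F(A')\to F(B')$, where $A'=A^{\Delta^n}$ and $B'=B^{\Delta^n}\by_{B^{\pd\Delta^n}}A^{\pd\Delta^n}$, and that $A'\to B'$ is again a \emph{little acyclic extension}. This is the precise point where homotopy-surjectivity of $F$ enters: it forces every relative matching map of $\underline{F}(A)\to\underline{F}(B)$ to be $\pi_0$-surjective. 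Now choose a Reedy fibrant replacement $f\co R\to\underline{F}(B)$ of $\underline{F}(A)$ over $\underline{F}(B)$. Its matching maps are Kan fibrations (Reedy) that are $\pi_0$-surjective, hence surjective in every simplicial degree; this says exactly that $f$ is a horizontal levelwise trivial fibration. By \cite{sht}~Proposition IV.1.7, $\diag f$ is a weak equivalence, and the comparison $\diag\simeq\bar{W}$ of \cite{CRdiag} converts this to the desired conclusion. No spectral sequence or skeletal-filtration convergence argument is needed; the ``levelwise trivial fibration'' observation replaces it and is what makes the proof close up. You should also isolate, rather than leave implicit, that acyclic little extensions are square-zero (so that homotopy-homogeneity applies to the relevant base changes) and that Lemma \ref{wtiny} lets one pass between tiny and little acyclic extensions when quoting the homotopy-surjecting hypothesis.
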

\begin{proof}
Consider the homotopy matching maps (for the Reedy model structure on bisimplicial sets) 
$$
\underline{F}(A)_n \to \underline{F}(B)_n\by_{M_{\pd \Delta^n}^h \underline{F}(B)}M_{\pd \Delta^n}^h\underline{F}(A)
$$
of 
\[
 \uline{F}(A) \to \uline{F}(B),
\]
 for an acyclic little extension $A \to B$. By Lemma \ref{wtiny}, we may replace tiny acyclic extensions with little acyclic extensions in the definition of homotopy-surjections.

By Proposition \ref{smatchnew}, the map above is weakly equivalent to 
$$
F(A') \to F(B'),
$$
where $A'= A^{\Delta^n}, B'= B^{\Delta^n}\by_{ B^{\pd\Delta^n}}A^{\pd\Delta^n}$. Now, $A' \to B'$ is a little acyclic extension, so the homotopy  matching maps of $\uline{F}(A) \to \underline{F}(B)$  are surjective on $\pi_0$  (as $\alpha$ is  homotopy-surjecting). 

For  any Reedy fibrant replacement $f:R \to \underline{F}(B)$ of $\uline{F}(A) \to \underline{F}(B)$, the homotopy  matching maps must also be surjective on $\pi_0$. However, for Reedy fibrations, matching objects model homotopy matching objects, so $f$  
is 
a Reedy surjective fibration, and hence a horizontal levelwise trivial fibration (the matching maps being surjective). It is therefore a diagonal weak equivalence by \cite{sht}  Proposition IV.1.7, and \cite{CRdiag} then shows that  $\bar{W}f$ is also a weak equivalence. Lemma \ref{properw} then implies that $\bar{W}F$ preserves all weak equivalences.
\end{proof}

\begin{proposition}\label{spmatchnew}
If $F:d\cN^{\flat} \to \bS$ is homotopy-homogeneous, then
for $A \to B$ a little extension  in $d\cN^{\flat}$    and $K$   a contractible  finite simplicial set, the map
\[
F(A^K)  \to( M_K^h\uline{F}(A))\by_{(M_K^h\uline{F}(B))}^hF(B^K) 
\]
is a weak equivalence in $\bS$. 
\end{proposition}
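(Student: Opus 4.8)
The plan is to imitate the proof of Proposition~\ref{smatchnew}, with the contractibility of $K$ playing the role that acyclicity of $A\to B$ played there. In \ref{smatchnew} the acyclicity of $A\to B$ — equivalently, squareness of its kernel — was needed only to guarantee that, in the skeletal filtration of $K$, the transition maps $A^{\Delta^n}\to A^{\pd\Delta^n}\by_{B^{\pd\Delta^n}}B^{\Delta^n}$ are square-zero extensions; this fails for a merely little extension precisely because $\pd\Delta^n\into\Delta^n$ is not a weak equivalence. For contractible $K$ one instead builds $K$ from $\Delta^0$ by attaching cells along the horn inclusions $\Lambda^m_j\into\Delta^m$, which \emph{are} weak equivalences, and the corresponding transition maps become square-zero for \emph{any} little extension.

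The key point is the following. \emph{If $A\onto B$ is a little extension in $d\cN^{\flat}$ with kernel $\mathcal{K}$, then for every $m\ge 1$ the map $\phi\colon A^{\Delta^m}\to A^{\Lambda^m_j}\by_{B^{\Lambda^m_j}}B^{\Delta^m}$ is a square-zero extension.} Indeed, since $\Lambda^m_j\into\Delta^m$ is a trivial cofibration and $A\to B$ is a fibration, $\phi$ is a trivial fibration (SM7), hence surjective; its kernel is $J=\ker(\mathcal{K}^{\Delta^m}\to\mathcal{K}^{\Lambda^m_j})$, which is acyclic because $\mathcal{K}^{\Delta^m}\to\mathcal{K}^{\Lambda^m_j}$ is a trivial fibration of simplicial modules. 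As $\Delta^m$ is connected, $\pi_0(A^{\Delta^m})=\pi_0A$, so acyclicity of $J$ gives $J\subseteq I_{A^{\Delta^m}}$, and hence $J^2\subseteq I_{A^{\Delta^m}}\!\cdot J\subseteq I_A^{\Delta^m}\!\cdot\mathcal{K}^{\Delta^m}\subseteq(I_A\cdot\mathcal{K})^{\Delta^m}=0$, using $I_{A^{\Delta^m}}=I_A^{\Delta^m}$ and that $A\to B$ is little. (The same argument fails with $\pd\Delta^m$ in place of $\Lambda^m_j$, since then $J$ need not be acyclic — which is why \ref{smatchnew} needs $A\to B$ acyclic.)

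Granting this, I induce on the number of non-degenerate simplices of $K$. The case $K=\Delta^0$ is trivial, both sides of the comparison map being $F(A)$. For the inductive step, write $K=L\cup_{\Lambda^m_j}\Delta^m$ with $L$ a proper contractible subcomplex and $m\ge 1$ — possible whenever $K$ is collapsible, which covers the cases arising in applications. Applying $A^{(-)}$ converts this pushout into the pullback $A^K=A^L\by_{A^{\Lambda^m_j}}A^{\Delta^m}$, and likewise for $B$; assembling these exactly as in \ref{smatchnew} yields a pullback square whose lower edge is governed by the map $\phi$ above, hence is a square-zero extension, so $F$ sends it to a homotopy pullback. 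Its three non-apex corners are computed by the inductive hypothesis for $L$ and for $\Lambda^m_j$ (each with fewer non-degenerate simplices), together with $M^h_{\Delta^m}\uline{F}(A)\simeq\uline{F}(A)_m=F(A^{\Delta^m})$ and the corresponding $B$-statements — here one notes that $B^{\Delta^m}\to B^{\Lambda^m_j}$ is a surjective weak equivalence, which by Lemma~\ref{wtiny} is a composite of square-zero extensions, so iterating homotopy-homogeneity still gives $F(B^K)\simeq F(B^L)\by^h_{F(B^{\Lambda^m_j})}F(B^{\Delta^m})$. Finally, since $M^h_{(-)}$ carries the pushout $K=L\cup_{\Lambda^m_j}\Delta^m$ to a homotopy pullback, the resulting homotopy pullback is identified with $M^h_K\uline{F}(A)\by^h_{M^h_K\uline{F}(B)}F(B^K)$, completing the step.

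The main obstacle is extending the inductive step beyond collapsible $K$ to an arbitrary finite contractible $K$: one must invoke that such a $K$ is simple-homotopy equivalent to a point (the Whitehead group of the trivial group vanishing), so that $K$ is joined to $\Delta^0$ by a finite chain of expansions and collapses through finite contractible complexes, and then propagate the statement along this chain. This propagation cannot be read off the pullback squares as mechanically as the skeleton-wise induction of \ref{smatchnew} — descending a collapse $L'\searrow L$ only recovers $F(A^L)$ from the "wrong" corner of a homotopy pullback — so it is here that the genuine technical work is concentrated, together with the bookkeeping of which auxiliary maps are literal square-zero extensions versus composites thereof. If only collapsible $K$ are needed, this difficulty disappears.
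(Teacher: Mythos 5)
Your approach is essentially the same as the paper's. The paper also proceeds by building $K$ from $\Delta^0$ by iterated pushouts of horn inclusions $\Lambda^{m}_k \to \Delta^m$ (stating ``For any contractible finite simplicial set $K$, any morphism $\Delta^0 \to K$ can be expressed as an iterated pushout of anodyne extensions $\Lambda^{m,k} \to \Delta^m$''), and then observes, exactly as you do, that for a little extension $A \to B$ the transition map $A^{\Delta^m} \to A^{\Lambda^m_j}\by_{B^{\Lambda^m_j}}B^{\Delta^m}$ is an acyclic little extension (hence square-zero), so homotopy-homogeneity applies --- in contrast to the situation with $\pd\Delta^m$ in Proposition~\ref{smatchnew}, where acyclicity of $A \to B$ was needed. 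Your explicit verification that this transition map is square-zero (via $J \subseteq I_{A^{\Delta^m}}$ and $J^2 \subseteq I_A^{\Delta^m}\cdot\mathcal K^{\Delta^m} = 0$) is a correct expansion of what the paper leaves implicit. The difference is mostly cosmetic: the paper phrases the induction on $\dim K$ and speaks of a contractible $K' \subset K$ of lower dimension with $K' \to K$ an iterated pushout of horn inclusions, while you induct on the number of non-degenerate simplices; these are interchangeable.

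The gap you flag is genuine and is \emph{shared by the paper}. The paper asserts without proof or reference that every finite contractible simplicial set is obtainable from a point by iterated pushouts of horn inclusions; this is the simplicial-set version of collapsibility, and the analogous statement for simplicial complexes is false (dunce hat, Bing's house). Trivial cofibrations are only retracts of iterated pushouts of horns, and the extra flexibility afforded by non-injective attaching maps in the simplicial-set setting does not obviously remove the obstruction --- your observation that simple-homotopy equivalence to a point (via $Wh(1)=0$) gives expansions \emph{and} collapses, and that a collapse lands the known quantity in the ``wrong corner'' of the pullback, is exactly the issue. So your proposal is as complete as the paper's, and more honest about what remains to be checked. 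It is worth noting that the only use of this proposition in the paper (Corollary~\ref{settotopb}) applies it with $K = \Lambda^{n,k}$, and horns genuinely are built from a vertex by pushouts of lower-dimensional horn inclusions, so the downstream results are unaffected even though the general statement of Proposition~\ref{spmatchnew} rests on an unjustified claim.
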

\begin{proof}
We adapt the proof of  Proposition \ref{smatchnew},
proceeding by induction on the dimension of $K$. If $K$ is of dimension $0$, the statement is automatically true.

 For  any contractible finite simplicial set $K$, any  morphism $\Delta^0 \to K$ can be expressed as an iterated pushout of anodyne extensions $\L^{m,k} \to \Delta^m$. In particular, if $K$ has dimension $n$, there is a contractible  simplicial set $K' \subset K$ of dimension $n-1$, with the map $K'\to K$ an iterated pushout of the maps $\L^{n,k} \to \Delta^n$ for various $k$.  The proposition holds  by induction for $K'$ and $\L^{n,k}$, and is automatically  satisfied by $\Delta^n$. 

Since the map $A^{\Delta^n} \to B^{\Delta^n}\by_{B^{\L^n,k}}A^{\L^n,k}$ is an acyclic little extension,
 the proof of  Proposition \ref{smatchnew} adapts to show that the proposition is satisfied by $K$, as required.
\end{proof}

\begin{corollary}\label{settotopb}
If a homotopy-homogeneous functor $F:d\cN \to \bS$ is  homotopy-surjecting, then the   functor  $\bar{W}\underline{F}:d\cN \to \bS$ is homotopy-homogeneous.
\end{corollary}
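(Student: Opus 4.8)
The plan is to argue as for Corollary \ref{settotop}, with Proposition \ref{spmatchnew} playing the role of Proposition \ref{smatchnew}. Fix a square-zero extension $A \to B$ and an arbitrary map $C \to B$ in $d\cN$, put $D := A\by_BC$, and recall from Corollary \ref{settotop} that $\bar W\uline F$ is homotopy-preserving, so homotopy-homogeneity reduces to showing that the natural map
\[
\bar W\uline F(D) \lra \bar W\uline F(A)\by^h_{\bar W\uline F(B)}\bar W\uline F(C)
\]
is a weak equivalence.

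The key point is that this map is obtained by applying $\bar W$ to a \emph{levelwise} weak equivalence of bisimplicial sets. The cotensor $(-)^{\Delta^n}$ preserves finite limits (Definition \ref{salgstr}, Lemma \ref{simplicialstr}), so $D^{\Delta^n} = A^{\Delta^n}\by_{B^{\Delta^n}}C^{\Delta^n}$, with $A^{\Delta^n},B^{\Delta^n},C^{\Delta^n}\in d\cN$; moreover $A^{\Delta^n}\to B^{\Delta^n}$ is again a square-zero extension. It is surjective since $(-)^{\Delta^n}$ carries surjections to surjections ($\Delta^n\by\Delta^m$ being contractible and $A\onto B$ a Kan fibration, in the simplicial case), and its kernel $(\ker(A\to B))^{\Delta^n}$ squares to zero, multiplication in $A^{\Delta^n}$ being pointwise. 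Since homotopy fibre products of $\bS$-valued Reedy diagrams are computed degreewise, the degree-$n$ component of the natural map $\uline F(D)\to\uline F(A)\by^h_{\uline F(B)}\uline F(C)$ (homotopy fibre product in the Reedy model structure on $s\bS$) is exactly the homotopy-homogeneity comparison map $F(A^{\Delta^n}\by_{B^{\Delta^n}}C^{\Delta^n})\to F(A^{\Delta^n})\by^h_{F(B^{\Delta^n})}F(C^{\Delta^n})$, which is a weak equivalence because $F$ is homotopy-homogeneous. Hence $\uline F(D)\to\uline F(A)\by^h_{\uline F(B)}\uline F(C)$ is a levelwise weak equivalence.

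It remains to transport this across $\bar W$, which amounts to the compatibility of $\bar W$ (equivalently, by \cite{CRdiag}, of the diagonal) with homotopy fibre products. Since $\bar W$ is a right adjoint and is right Quillen for a suitable model structure on $s\bS$, a Reedy fibrant-replacement argument --- exactly as in the proof of Corollary \ref{settotop}, using Proposition \ref{spmatchnew} to identify the homotopy matching objects of $\uline F$ along (not necessarily acyclic) little extensions --- shows that $\bar W$ carries levelwise weak equivalences to weak equivalences and degreewise-homotopy-cartesian squares of bisimplicial sets to homotopy-cartesian squares of simplicial sets. Applying this to the square built above yields the required equivalence. The one genuinely delicate step is this last transport step; the degreewise verification is immediate from homotopy-homogeneity of $F$.
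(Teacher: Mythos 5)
You have the right overall skeleton, but the step you yourself flag as ``genuinely delicate'' is precisely where the proposal goes wrong, and in two distinct ways.

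First, the claim that a Reedy fibrant-replacement argument ``shows that $\bar W$ carries \dots degreewise-homotopy-cartesian squares of bisimplicial sets to homotopy-cartesian squares of simplicial sets'' is simply false as a general principle. The failure of $\diag$ (equivalently $\bar W$) to preserve homotopy pullbacks is exactly what the $\pi_*$-Kan condition and the Bousfield--Friedlander theorem are about. What is true, and what the paper actually uses, is much more specific: by Proposition \ref{spmatchnew} the relative homotopy \emph{partial} matching objects of $\uline F(A) \to \uline F(B)$ are of the form $F(A^{\L^{n,k}})\by^h_{F(B^{\L^{n,k}})}F(B^{\Delta^n})$, and since $A^{\Delta^n}\to A^{\L^{n,k}}\by_{B^{\L^{n,k}}}B^{\Delta^n}$ is an \emph{acyclic} little extension, homotopy-surjectivity of $F$ makes the partial matching maps surjective on $\pi_0$. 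This is what forces any Reedy fibrant replacement $R\to\uline F(B)$ to be, in addition, a horizontal levelwise Kan fibration, which is the hypothesis of \cite{sht} Lemma IV.4.8 giving that $\diag$ of this map is a Kan fibration. Your argument never invokes homotopy-surjectivity at this step (it is the only place the hypothesis enters), and without it there is nothing to transport.

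Second, the Corollary concerns an arbitrary square-zero extension $A\to B$ in $d\cN$, whereas Proposition \ref{spmatchnew} (on which the matching-object identification rests) applies only to \emph{little} extensions, i.e.\ surjections with $I_A\cdot K=0$. These are not the same class of maps, and your proof works directly with the given square-zero $A\to B$ without ever reducing to little extensions. The paper closes this gap by first proving the statement for square-zero little extensions and then factoring a general square-zero extension with kernel $K$ as the tower of square-zero little extensions $A/(I_A^{n+1}K)\to A/(I_A^nK)$, which is where the conclusion for general square-zero extensions actually comes from. Without some such reduction your use of Proposition \ref{spmatchnew} is not legitimate.

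The levelwise verification in the middle of your argument (that each $A^{\Delta^n}\to B^{\Delta^n}$ is square-zero in $d\cN$ and that $F$ applied degreewise gives the homotopy pullback) is fine, and indeed coincides with what the paper does once it has $\diag$ of the Reedy replacement under control. But the two omissions above are the mathematical content of the Corollary; as written, the proposal restates the conclusion of the hard step rather than proving it.
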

\begin{proof}
Take a square-zero little extension $A \to B$; by Proposition \ref{spmatchnew}, the relative homotopy partial matching  object 
\[
 M_{\L^{n,k}}^h\uline{F}(A)\by^h_{M^h_{\L^{n,k}}\uline{F}(B)}\uline{F}(B)_n
\]
 is 
\[
F(A^{\L^{n,k}})\by^h_{F(B^{\L^{n,k}})}F(B^{\Delta^n}).
\]
 
Since $A^{\Delta^{n}} \to A^{\L^{n,k}}\by_{B^{\L^{n,k}}}B^{\Delta^n}$  is an  acyclic little extension, homotopy-surjectivity of $F$ thus implies that the homotopy partial matching map
\[
\uline{F}(A)_n \to M_{\L^{n,k}}^h\uline{F}(A)\by^h_{M^h_{\L^{n,k}}\uline{F}(B)}\uline{F}(B)_n
\]
 gives a surjection on $\pi_0$. 

If we take a Reedy fibrant replacement $R$ for  $\uline{F}(A)$ over $\uline{F}(B)$, this says that
\[
R_n \to M_{\L^{n,k}}R\by_{M_{\L^{n,k}}\uline{F}(B)}\uline{F}(B)_n
\] 
is surjective on $\pi_0$ --- since it is (automatically) a fibration, this implies that it is surjective levelwise.

Thus $f:R \to \uline{F}(B)$ is a Reedy fibration  and a horizontal levelwise Kan fibration, so \cite{sht} Lemma IV.4.8 implies that $\diag f$ is a fibration, so for any map $C \to B$,
\begin{eqnarray*}
(\diag\uline{F}(A))\by^h_{(\diag\uline{F}(B))}(\diag\uline{F}(C)) &\simeq& (\diag R)\by_{(\diag\uline{F}(B))}(\diag\uline{F}(C))\\
&=& \diag( R\by_{(\uline{F}(B)}\uline{F}(C)  )\\
&\simeq& \diag( \uline{F}(A)\by_{\uline{F}(B)}^h\uline{F}(C))\\
&\simeq&\diag \uline{F}(A \by_BC),
\end{eqnarray*}
the penultimate equivalence following because $R \to \uline{F}(B)$ is a Reedy fibrant replacement for $\uline{F}(A)$, and the final one because $F$ is homotopy-homogeneous and $A \to B$ is square-zero. 

Finally, \cite{CRdiag} shows that $\bar{W}X$ and $\diag X$ are weakly equivalent for all $X$, so
\[
(\bar{W}\uline{F}(A))\by^h_{(\bar{W}\uline{F}(B))}(\bar{W}\uline{F}(C)) \simeq \bar{W}\uline{F}(A \by_BC).
\]
Any  square-zero extension $A \to B$ in $d\cN$ with kernel $K$ can be expressed as the composition of the little extensions $A/(I_A^{n+1}K) \to A/(I_A^{n}K)$, making $\bar{W}\uline{F}$ homotopy-homogeneous.
\end{proof}

\begin{lemma}\label{ulinec}
For a homotopy-preserving functor $F:d\cN \to \bS$, the natural transformation $F \to \bar{W}\uline{F}$  is a weak equivalence.
\end{lemma}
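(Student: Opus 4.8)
The plan is to prove that $F(A) \to \bar{W}\uline{F}(A)$ is a weak equivalence for every $A \in d\cN$, and the key is to factor this map through the diagonal of the bisimplicial set $\uline{F}(A)$. Recall that $\uline{F}(A)$ is the simplicial object $[n] \mapsto F(A^{\Delta^n})$ in $\bS$; that the canonical map $\diag \uline{F}(A) \to \bar{W}\uline{F}(A)$ is a weak equivalence by \cite{CRdiag}; and that the natural transformation $F \to \bar{W}\uline{F}$ is, by construction, the composite of that map with $F(A) = \diag cF(A) \to \diag \uline{F}(A)$, the latter being $\diag$ applied to the morphism $cF(A) \to \uline{F}(A)$ of bisimplicial sets induced levelwise by the maps $A = A^{\Delta^0} \to A^{\Delta^n}$ coming from the unique simplicial maps $\Delta^n \to \Delta^0$ (here $cF(A)$ denotes the constant simplicial object on $F(A)$). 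It thus suffices to show that $F(A) \to \diag \uline{F}(A)$ is a weak equivalence.

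The first step is to verify that each structure map $A \to A^{\Delta^n}$ is a weak equivalence in $d\cN$. In the simplicial case this follows from the weak simplicial structure: the vertex inclusion $\Delta^0 \into \Delta^n$ is a trivial cofibration of simplicial sets, so the induced map $A^{\Delta^n} \to A^{\Delta^0} = A$ is a trivial fibration, and $A \to A^{\Delta^n}$, being a section of it, is a weak equivalence. In the chain case $A^{\Delta^n} = \tau_{\ge 0}(A \ten \Omega_n)$ and the inclusion of constants $\Q \into \Omega_n$ is a quasi-isomorphism, so $A \to A^{\Delta^n}$ is again a weak equivalence. (Weak equivalences in $d\cN$ are those detected by $U$, so these are just the corresponding statements in $s\Alg_R$ resp.\ $dg_+\Alg_R$.)

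Since $F$ is homotopy-preserving, the previous step shows that $F(A) \to F(A^{\Delta^n}) = \uline{F}(A)_n$ is a weak equivalence for every $n$; equivalently, $cF(A) \to \uline{F}(A)$ is a levelwise weak equivalence of bisimplicial sets. As a levelwise weak equivalence of bisimplicial sets induces a weak equivalence on diagonals (\cite{sht} Proposition IV.1.7), the map $F(A) = \diag cF(A) \to \diag \uline{F}(A)$ is a weak equivalence, and composing with the equivalence of \cite{CRdiag} finishes the proof.

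I do not anticipate any serious difficulty: the statement is essentially formal given the constructions of \S\ref{prerepsn}. The only points that require a little care are the identification of the transformation $F \to \bar{W}\uline{F}$ as the composite through $\diag\uline{F}$, and the contractibility $A^{\Delta^n} \simeq A$ in the categories $d\cN$, which are not model categories; both are immediate from the definition of the weak simplicial structure.
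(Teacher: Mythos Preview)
Your proof is correct and follows essentially the same approach as the paper's. The only difference is cosmetic: you route the argument through $\diag$ (using that levelwise weak equivalences of bisimplicial sets induce weak equivalences on diagonals, then invoking \cite{CRdiag} for $\diag \to \bar{W}$), whereas the paper applies $\bar{W}$ directly to the levelwise weak equivalence $cF(A) \to \uline{F}(A)$ and uses that $\bar{W}$ itself preserves levelwise weak equivalences; both arguments rest on the same observation that $A \to A^{\Delta^n}$ is a weak equivalence and $F$ is homotopy-preserving.
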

\begin{proof}
The transformation comes from applying $\bar{W}$ to the maps $F(A) \to \uline{F}(A)$ of bisimplicial sets coming from  the canonical maps $A \to A^{\Delta^n}$.

Since $A \to A^{\Delta^n}$ is a weak equivalence, the maps $F(A) \to \uline{F}(A)$ are also weak equivalences levelwise, so $F=\bar{W}F \to \bar{W}\uline{F}$ is a weak equivalence (as $\bar{W}$ sends levelwise weak equivalences to weak equivalences).
\end{proof}

\subsection{Representability}

\begin{definition}\label{pretotcohodef}
Given a homotopy-surjecting homotopy-homogeneous functor $F: d\cN_R^{\flat} \to \bS$, $A \in  \Alg_{\H_0R}$, $x \in F_0(A)$, and an     $A$-module $M$, define $\DD^{i}_x(F,M)$ as follows.

For $i\le 0$, set 
\[
\DD^i_x(F,M):= \pi_{-i}(F(A\oplus M)\by^h_{F(A)}\{x\}). 
\]
For $i >0$, set 
\[
\DD^i_x(F,M):= \pi_0(F(A\oplus M[-i])\by^h_{F(A)}\{x\})/\pi_0(F(A\oplus \cone(M)[1-i])\by^h_{F(A)}\{x\} ). 
\]

Note that homotopy-homogeneity of $F$ ensures that these are abelian groups for all $i$, and that the multiplicative action of $A$ on $M$ gives them the structure of $A$-modules.
\end{definition}

\begin{lemma}\label{totcohoc}
For all $F, A, M$ as above, there are canonical isomorphisms
\[
\DD^i_x(F,M) \cong \DD^i_x(\bar{W}\uline{F},M),
\]
where the group on the left-hand side is defined as in  Definition \ref{pretotcohodef}, and that on the right as in Definition \ref{totcohodef}.

In particular, if $F$ is homotopy-preserving, then Definitions  \ref{pretotcohodef} and  \ref{totcohodef} are consistent.
\end{lemma}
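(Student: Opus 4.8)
Write $G := \bar{W}\uline{F}$. By Corollaries \ref{settotop} and \ref{settotopb}, $G$ is homotopy-preserving and homotopy-homogeneous, so $\DD^i_x(G,M)$ is defined as in Definition \ref{totcohodef}, and by Lemma \ref{adf} it may be computed as $\pi_k(T_x(G/R)(M[-n]))$ for any $n,k \ge 0$ with $n-k = i$. The ``in particular'' follows at once from Lemma \ref{ulinec}: when $F$ is homotopy-preserving, $F \to G$ is an objectwise weak equivalence, so Definition \ref{totcohodef} for $F$ agrees with Definition \ref{totcohodef} for $G$, which by the body of the lemma agrees with Definition \ref{pretotcohodef}.

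For the main claim I would split into the ranges $i \le 0$ and $i > 0$. The elementary input is that $A$, and hence $A \oplus M$ (recall $M$ is a discrete $A$-module), is discrete, so $A^{\Delta^n} = A$ --- a map out of the connected simplicial set $\Delta^n \times \Delta^q$ into a constant simplicial set is constant. Thus $\uline{F}(A)$ is the bisimplicial set which is constant in the cosimplicial direction with value $F(A)$, so the natural map $F(A) \to G(A)$ is a weak equivalence, and likewise $F(A \oplus M) \xrightarrow{\ \sim\ } G(A \oplus M)$. Compatibility with the projections to $F(A)$ and with $x$ then gives $T_x(G/R)(M) \simeq F(A \oplus M) \by^h_{F(A)} \{x\}$, and applying $\pi_{-i}$ recovers Definition \ref{pretotcohodef} for $i \le 0$.

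For $i > 0$ one cannot replace $G$ by $F$ at the rings $A \oplus M[-i]$ and $A \oplus \cone(M)[1-i]$ of Definition \ref{pretotcohodef}, which are no longer discrete; instead I would compute $T_x(G/R)(M[-i])$ directly. Set $Q(N) := F(A \oplus N) \by^h_{F(A)} \{x\}$ (this makes sense for the merely homotopy-homogeneous $F$, unlike the tangent functor of Definition \ref{Tdef}). Using $\bar{W} \simeq \diag$ \cite{CRdiag}, a path-fibration replacement of $\{x\} \to \uline{F}(A)$ (legitimate because $\uline{F}(A)$ is constant), and the identities $A^{\Delta^n} = A$, $(A \oplus N)^{\Delta^n} = A \oplus N^{\Delta^n}$, one obtains a natural equivalence
\[
T_x(G/R)(M[-i]) \ \simeq\ \diag\bigl[\, n \mapsto Q\bigl((M[-i])^{\Delta^n}\bigr) \,\bigr].
\]
Taking $\pi_0$ of a diagonal,
\[
\DD^i_x(G,M) = \pi_0 T_x(G/R)(M[-i]) = \operatorname{coeq}\Bigl( \pi_0 Q\bigl((M[-i])^{\Delta^1}\bigr) \rightrightarrows \pi_0 Q(M[-i]) \Bigr),
\]
the two arrows being $e_{0*}, e_{1*}$ induced by the two vertex evaluations $e_0, e_1 \colon (M[-i])^{\Delta^1} \to M[-i]$. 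Both $e_j$ are split by the degeneracy, and since homotopy-homogeneity makes these $\pi_0$'s abelian groups on which $A$-module maps act additively, the coequalizer is $\coker(e_{0*} - e_{1*})$. Using the $A$-module splitting $(M[-i])^{\Delta^1} \cong M[-i] \oplus \ker(e_0)$ afforded by $e_0$ and the degeneracy, one has $e_0 = \pr_1$ and $e_1 = \pr_1 + c \circ \pr_2$ with $c := e_1|_{\ker(e_0)}$; the based-path module $\ker(e_0)$ is acyclic and is canonically identified with $\cone(M)[1-i]$, and $c$ with the canonical surjection $\cone(M)[1-i] \onto M[-i]$. Since $e_0 - e_1 = -c \circ \pr_2$ with $\pr_2$ split surjective, $\im(e_{0*} - e_{1*}) = \im(c_*) = \im\bigl( \pi_0 Q(\cone(M)[1-i]) \to \pi_0 Q(M[-i]) \bigr)$, so $\DD^i_x(G,M) = \pi_0 Q(M[-i]) / \im(c_*)$, which is exactly Definition \ref{pretotcohodef}.

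The work is all in the case $i > 0$, with two delicate points. First, one must justify that $\bar{W}$ (equivalently $\diag$) commutes with the homotopy fibre products involved --- this is where constancy of $\uline{F}(A)$ is used, through a path-object model of the basepoint --- and that $\pi_0\diag$ is the coequalizer of the two degree-one cosimplicial face maps, namely the vertex evaluations. Second, one must pin down the $A$-module isomorphism $\ker(e_0) \cong \cone(M)[1-i]$ compatibly with the surjection onto $M[-i]$, and check the analogue in the dg case (where the path object is $\tau_{\ge 0}(- \ten \Omega_1)$), so that the answer matches Definition \ref{pretotcohodef} on the nose rather than merely up to a choice of acyclic resolution --- a genuine issue because $F$ need not preserve weak equivalences. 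Homotopy-surjectivity of $F$ enters chiefly through Corollaries \ref{settotop}--\ref{settotopb}, on which the very applicability of Definition \ref{totcohodef} to $G$ rests.
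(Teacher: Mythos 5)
Your proposal follows essentially the same route as the paper's proof, and is correct in its main outline. You identify the two key facts exactly as the paper does: for $i\le 0$ the claim is immediate from $A^{\Delta^n}=A$ and $(A\oplus M)^{\Delta^n}=A\oplus M$ for discrete $A$, and for $i>0$ the computation is $\pi_0$ of $\diag\simeq\bar W$ applied to the bisimplicial object $n\mapsto Q\bigl((M[-i])^{\Delta^n}\bigr)$, identified as a coequalizer/quotient by the image of $(\pd_0-\pd_1)_*$, followed by the reduction of that image to the image of $\pi_0Q(\cone(M)[1-i])\to\pi_0Q(M[-i])$ using the splitting of $(M[-i])^{\Delta^1}$ induced by $e_0$ and the degeneracy.

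Two points are worth flagging. First, you correctly observe that $\bar W$ does not in general commute with homotopy fibre products, so the identity $T_x(\bar W\uline F/R)(N)\simeq\bar W\bigl(T_x(\uline F/R)(N)\bigr)$ that the paper uses without comment does need justification; your remark that this is saved by $\uline F(A)$ being constant in the cosimplicial direction (so the map $\uline F(A\oplus N)\to\uline F(A)$ has constant target, and one can fibre-replace over $F(A)$ once and for all) is exactly the right observation, and is a genuine addition of detail over the paper's terser write-up. Second, your claim that $\ker(e_0)$ is ``canonically identified with'' $\cone(M)[1-i]$ is slightly stronger than what is needed or, in the dg case, what is literally true: what the argument requires (and what the paper proves, via an explicit retraction for $dg_+\cN^\flat_R$ with $(M[-i])^{\Delta^1}=\tau_{\ge0}(M[-i]\ten\Omega_1)$) is that $\cone(M)[1-i]$ is a \emph{retract} of $\ker(e_0)$ over $M[-i]$, which already forces $\im\bigl(\pi_0Q(\ker e_0)\to\pi_0Q(M[-i])\bigr)=\im\bigl(\pi_0Q(\cone(M)[1-i])\to\pi_0Q(M[-i])\bigr)$. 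You flag this issue honestly for the dg case but leave it unresolved; the paper closes the gap by writing down the retraction $m\ten1\mapsto(m,0)$, $m\ten t_0^n\mapsto(0,m)$ ($n>0$), $m\ten t_0^n\,dt_0\mapsto(0,dm/(n+1))$. So: same argument, with you supplying one extra justification the paper omits, and the paper supplying the one explicit dg computation you leave as a ``delicate point.''
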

\begin{proof}
We begin by noting that $\bar{W}\uline{F}$ is indeed homotopy-preserving and homotopy-homogeneous, by Corollaries \ref{settotop} and \ref{settotopb}. Since  $F(A)= \bar{W}\uline{F}(A)$ for all $ A \in \Alg_{\H_0R}$, it follows immediately that $ \DD^i_x(F,M) \cong \DD^i_x(\bar{W}\uline{F},M)$ for all $i \le 0$. Now for $i >0$,
\begin{eqnarray*}
\DD^i_x(\bar{W}\uline{F},M)&=& \pi_0 (\bar{W}(T_x(\uline{F}/R)(M[-i])) \\   
&=& \pi_0(T_x(F/R)(M[-i]))/ \pi_0 (F((A\oplus M[-i])^{\Delta^1}\by_{A^{\Delta^1}}A)\by^h_{F(A)}\{x\})\\
&=&   \pi_0(T_x(F/R)(M[-i]))/ \pi_0(T_x(F/R)(M[-i]^{\Delta^1})),  
\end{eqnarray*}
where the quotient is taken by the map $\pd_0-\pd_1$ coming from the projections $M^{\Delta^1} \to M$. 
If $d\cN^{\flat}_R= s\cN^{\flat}_R$, then $M[-i]^{\Delta^1}= M[-i] \oplus \cone(M)[1-i]$, so   $\DD^i_x(\bar{W}\uline{F},M)\cong \DD^i_x(F,M)$. When $d\cN^{\flat}_R= dg_+\cN^{\flat}_R$, we have more work to do. In this case, $M[-i]^{\Delta^1} = \tau_{\ge 0} (M[-i]\ten \Omega_n)$. The key observation to make is that $M[-i] \oplus \cone(M)[1-i]$ can be expressed as a retract of $\tau_{\ge 0} (M[-i]\ten \Omega_n)$ over $M \oplus M$ given by $m \ten 1 \mapsto (m,0)$, $m\ten x_0^n \mapsto (0,m)$ for $n>0$, and   $m \ten x_0^ndx_0  \mapsto (0,dm/(n+1))$. Thus  
\[
(\pd_0-\pd_1):  \pi_0(T_x(F/R)(M[-i]^{\Delta^1}) \to \pi_0(T_x(F/R)(M[-i]))
\]
has the same image as $\pi_0(T_x(F/R)(\cone(M)[1-i]) \to \pi_0(T_x(F/R)(M[-i]))$, so $\DD^i_x(\bar{W}\uline{F},M)\cong \DD^i_x(F,M)$.

Finally, if $F$ is homotopy-preserving, then Lemma \ref{ulinec} shows that the map $F \to \bar{W}\uline{F}$ is a weak equivalence, making  the definitions  consistent.
\end{proof}

\begin{theorem}\label{lurieprerep}\label{lureisemirep}
Let $R$ be a derived G-ring admitting a dualising module (in the sense of \cite{lurie} Definition 3.6.1) and   take a functor $F: d\cN_R^{\flat} \to \bS$ satisfying the following conditions.

\begin{enumerate}
 
\item $F$ is homotopy-surjecting, i.e. it maps tiny acyclic extensions to surjections (on $\pi_0$).

\item For all discrete rings $A$, $F(A)$ is $n$-truncated, i.e. $\pi_iF(A)=0$ for all $i>n$ .

\item
$F$ is homotopy-homogeneous, i.e. for all square-zero extensions $A \onto C$ and all maps $B \to C$, the map
$$
F(A\by_CB) \to F(A)\by_{F(C)}^hF(B)
$$
is an equivalence.


\item $\pi^0F:\Alg_{\H_0R} \to \bS$ is a hypersheaf for the \'etale topology. 

\item
$\pi_0\pi^0F:  \Alg_{\H_0R} \to \Set$  preserves filtered colimits.

\item
For all $A \in \Alg_{\H_0R}$ and all $x \in F(A)$, the functors $\pi_i(\pi^0F,x): \Alg_A \to \Set$  preserve filtered colimits for all $i>0$.

\item 
for all finitely generated integral domains $A \in \Alg_{\H_0R}$, all $x \in F(A)_0$ and all \'etale morphisms $f:A \to A'$, the maps 
\[
\DD_x^*(F, A)\ten_AA' \to \DD_{fx}^*(F, A')
\]
are isomorphisms.

\item
 for all finitely generated $A \in \Alg_{\H_0R}$  and all $x \in F(A)_0$, the functors $\DD^i_x(F, -): \Mod_A \to \Ab$ preserve filtered colimits for all $i>0$.

\item for all finitely generated integral domains $A \in \Alg_{\H_0R}$  and all $x \in F(A)_0$, the groups $\DD^i_x(F, A)$ are all  finitely generated $A$-modules.

\item for all complete discrete local Noetherian  $\H_0R$-algebras $A$, with maximal ideal $\m$, the map
$$
\pi^0F(A) \to {\Lim}^h F(A/\m^r)
$$
is a weak equivalence.
\end{enumerate}
 Then $\bar{W}\uline{F}$ is the restriction to $d\cN^{\flat}_R$ of a geometric derived $n$-stack $F':s\Alg_R \to \bS$ (resp. $F':dg_+\Alg_R \to \bS$), which is almost of finite presentation. Moreover, $F'$ is uniquely determined by $F$ (up to weak equivalence).
\end{theorem}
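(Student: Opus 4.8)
The plan is to reduce the whole statement to Theorem~\ref{lurierep3}, applied not to $F$ but to $G:=\bar{W}\uline{F}: d\cN^{\flat}_R\to\bS$. The three facts that make this work are: $G$ is automatically homotopy-preserving and homotopy-homogeneous; $G$ agrees with $F$ on discrete algebras; and $G$ has the same tangent cohomology as $F$. Granting these, hypotheses (1)--(10) above are, almost verbatim, the hypotheses of Theorem~\ref{lurierep3} for $G$, and that theorem then delivers $F'$ together with its uniqueness.

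First I would record that, by Corollary~\ref{settotop} and Corollary~\ref{settotopb} (which use exactly hypotheses (1) and (3), namely that $F$ is homotopy-surjecting and homotopy-homogeneous), the functor $G=\bar{W}\uline{F}$ is homotopy-preserving and homotopy-homogeneous on $d\cN^{\flat}_R$. Since a tiny acyclic extension is in particular a surjective weak equivalence, $G$ carries tiny acyclic extensions to weak equivalences, so condition (1) of Theorem~\ref{lurierep3} holds for $G$, and condition (3) holds as well. Next I would observe that for discrete $A$ one has $A^{\Delta^n}=A$: in the simplicial case $A^K=A$ for any connected $K$ when $A$ is a constant simplicial ring, and in the dg case $\tau_{\ge 0}(A\ten\Omega_n)=A$ because $\Omega_n$ has trivial reduced cohomology and $\tau_{\ge 0}$ discards the negative-degree part. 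Hence $\uline{F}(A)_n=F(A)$ independently of $n$, so $G(A)=\bar{W}\uline{F}(A)=F(A)$ and $\pi^0G=\pi^0F$ (this identification is already used in the proof of Lemma~\ref{totcohoc}). Consequently conditions (2), (4), (5), (6) and (10) of Theorem~\ref{lurierep3} for $G$ are literally hypotheses (2), (4), (5), (6) and (10) above. Finally, Lemma~\ref{totcohoc} provides natural isomorphisms $\DD^i_x(G,M)\cong\DD^i_x(F,M)$ for all $i$, functorial in $M$, so conditions (7), (8) and (9) of Theorem~\ref{lurierep3} for $G$ follow from hypotheses (7), (8) and (9).

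At that point Theorem~\ref{lurierep3} applied to $G$ produces an almost finitely presented geometric derived $n$-stack $F': s\Alg_R\to\bS$ (resp.\ $dg_+\Alg_R\to\bS$) with $F'|_{d\cN^{\flat}_R}\simeq G=\bar{W}\uline{F}$, and, again by Theorem~\ref{lurierep3}, $F'$ is uniquely determined up to weak equivalence by $G$, hence by $F$; this finishes the argument. The substantive content of the proof is thus entirely pushed into its inputs --- Corollaries~\ref{settotop} and \ref{settotopb} and Lemma~\ref{totcohoc} --- rather than into this final assembly. Accordingly, I expect no real obstacle here beyond the bookkeeping needed to see that the passage from $F$ to $\bar{W}\uline{F}$ leaves both the discrete part $\pi^0F$ and the tangent cohomology $\DD^\ast_x(F,-)$ unchanged; the one genuinely delicate point, that homotopy-surjectivity together with homotopy-homogeneity already forces $\bar{W}\uline{F}$ to be homotopy-preserving and homotopy-homogeneous, will have been settled in proving those corollaries, so what remains is only to check that the finiteness, filtered-colimit and \'etale-hypersheaf conditions transfer through this identification.
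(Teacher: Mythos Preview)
Your proposal is correct and follows essentially the same approach as the paper: invoke Corollaries~\ref{settotop} and~\ref{settotopb} to make $\bar{W}\uline{F}$ homotopy-preserving and homotopy-homogeneous, observe that $\pi^0F=\pi^0\bar{W}\uline{F}$ since $A^{\Delta^n}=A$ for discrete $A$, invoke Lemma~\ref{totcohoc} for the tangent cohomology, and then feed everything into Theorem~\ref{lurierep3}. The paper's own proof is a three-line version of exactly this argument; your additional detail (spelling out why $A^{\Delta^n}=A$ and checking conditions one by one) is welcome but not structurally different.
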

\begin{proof}

By Corollaries \ref{settotop} and \ref{settotopb}, $\bar{W}\uline{F}$ is homotopy-preserving and homotopy-homogeneous. Since $\pi^0F= \pi^0\uline{F}$, the map   $\pi^0F \to \pi^0 \bar{W}\uline{F}$ is a weak equivalence. Lemma \ref{totcohoc} then shows that $\DD^i_x(F,M) \cong \DD^i_x(\bar{W}\uline{F},M)$, so $\bar{W}F$ satisfies all the conditions of Theorem \ref{lurierep3}.
\end{proof}

\begin{example}\label{dgexample}
If $X$ is a dg manifold (in the sense of \cite{Quot}), then the functor $X: dg_+\cN^{\flat}_R \to \Set$ given by $X(A)= \Hom(\Spec A, X)$ satisfies the conditions of Theorem \ref{lurieprerep}, so $\uline{X}:  dg_+\cN^{\flat}_R \to \bS$ is a geometric derived $0$-stack.

In fact, $\uline{X}$ is just the hypersheafification of $X$. This follows because $\uline{X}$ is a geometric derived $0$-stack, so $\uline{X}^{\sharp}= \uline{X}$, and there is thus a map $f:X^{\sharp} \to \uline{X}$. Since $X^{\sharp}$ is a geometric derived $0$-stack (as can be shown for instance by observing that it is equivalent to the derived stack  $\gpd(X)^{\sharp}$ of \cite{stacks2} \S \ref{stacks-dgstacks}), Proposition \ref{detectweak} implies that $f$ must be an equivalence.
\end{example}

This example will be adapted further in \cite{dmc}, constructing geometric derived $n$-stacks from DG Lie algebras similar to those used in \cite{Hilb} and \cite{Quot}.

\bibliographystyle{alphanum}
\bibliography{references}
\end{document}